\title{Canonical thresholding for non-sparse \\high-dimensional linear regression\thanks{Research supported by ONR grant N00014-19-1-2120, NSF grant DMS-1662139,  and NIH grant 2R01-GM072611-14. E-mail: {isilin@princeton.edu}, jqfan@princeton.edu}
}
\date{}
\newcommand{\eU}{\mathbf{\widehat{U}}}
\newcommand{\eV}{\mathbf{\widehat{V}}}
\newcommand{\eL}{\mathbf{\widehat{\Lambda}}}
\newcommand{\ebeta}{\boldsymbol{\widehat{\beta}}}
\newcommand{\eff}{\mathsf{D}^{\mathsf{eff}}}
\newcommand{\snr}{\mathsf{SNR}}
\newcommand{\reff}{\mathsf{r^{eff}}}
\newtheorem{theorem}{Theorem}[section]
\newtheorem{corollary}[theorem]{Corollary}
\newtheorem{lemma}[theorem]{Lemma}
\newtheorem{proposition}[theorem]{Proposition}
\newtheorem{assumption}{Assumption}[section]
\newtheorem{definition}{Definition}[section]
\newtheorem{remark}[]{Remark}
\numberwithin{equation}{section}      
\numberwithin{remark}{section}
\numberwithin{example}{section}
\DeclareMathOperator{\eqdef}{\;\stackrel{\operatorname{def}}{=}\;}
\DeclareMathOperator{\Prob}{\mathbb{P}}
\DeclareMathOperator{\E}{\mathbb{E}}
\DeclareMathOperator{\R}{\mathbb{R}}
\newcommand{\Id}{\mathbb{I}}
\newcommand{\Oo}{\mathbb{O}}
\DeclareMathOperator{\Tr}{\mathsf{Tr}}
\DeclareMathOperator{\rank}{\mathsf{rank}}
\DeclareMathOperator{\Fr}{\mathsf{F}}
\newcommand{\T}{\top}
\newcommand{\eps}{\varepsilon}
\newcommand{\XX}{\pmb{\mathbb{X}}}
\newcommand{\YY}{\pmb{\mathbb{Y}}}
\newcommand{\ZZ}{\pmb{\mathbb{Z}}}
\newcommand{\St}{\mathbf{\Sigma}}
\newcommand{\Se}{\mathbf{\widehat{\Sigma}}}
\newcommand{\bA}{\mathbf{A}}
\newcommand{\bC}{\mathbf{C}}
\newcommand{\bH}{\mathbf{H}}
\newcommand{\bU}{\mathbf{U}}
\newcommand{\bV}{\mathbf{V}}
\newcommand{\ba}{\mathbf{a}}
\newcommand{\bbb}{\mathbf{b}} 
\newcommand{\be}{\mathbf{e}}
\newcommand{\bh}{\mathbf{h}}
\newcommand{\br}{\mathbf{r}}
\newcommand{\bu}{\mathbf{u}}
\newcommand{\bw}{\mathbf{w}}
\newcommand{\bx}{\mathbf{x}}
\newcommand{\bz}{\mathbf{z}}
\newcommand{\bbeta}{\boldsymbol{\beta}}
\newcommand{\bgamma}{\boldsymbol{\gamma}}
\newcommand{\beps}{\boldsymbol{\varepsilon}}
\newcommand{\btheta}{\boldsymbol{\theta}}
\newcommand{\bxi}{\boldsymbol{\xi}}
\newcommand{\bpsi}{\boldsymbol{\psi}}
\newcommand{\bomega}{\boldsymbol{\omega}}
\newcommand{\bLambda}{\mathbf{\Lambda}}
\author[]{Igor Silin}
\author[]{Jianqing Fan}
\affil[]{Department of Operations Research and Financial Engineering\\ Princeton University}
\begin{document}

\maketitle
\vspace{-0.5cm}
\begin{abstract}
     We consider a high-dimensional linear regression problem. Unlike many papers on the topic, we do not require sparsity of the regression coefficients; instead, our main structural assumption is a decay of eigenvalues of the covariance matrix of the data. We propose a new family of estimators, called the canonical thresholding estimators, which pick largest regression coefficients in the canonical form. The estimators admit an explicit form and can be linked to LASSO and Principal Component Regression (PCR). A theoretical analysis for both fixed design and random design settings is provided. Obtained bounds on the mean squared error and the prediction error of a specific estimator from the family allow to clearly state sufficient conditions on the decay of eigenvalues to ensure convergence. In addition, we promote the use of the relative errors, strongly linked with the out-of-sample $R^2$. The study of these relative errors leads to a new concept of joint effective dimension, which incorporates the covariance of the data and the regression coefficients simultaneously, and describes the complexity of a linear regression problem.  Some minimax lower bounds are established to showcase the optimality of our procedure. Numerical simulations confirm good performance of the proposed estimators compared to the previously developed methods.
\end{abstract}

\noindent {\bf Keywords: } High-dimensional linear regression; covariance eigenvalues decay; thresholding; relative errors; principal component regression.

\newpage

\section{Introduction and Setup}
Consider the standard linear regression model
\begin{equation}
	\begin{aligned}
		y = \bx^\T \bbeta + \eps,
	\nonumber
	\end{aligned}
\end{equation}
where $\,\bx \in \R^d\,$ is a vector of covariates, $\,\bbeta \in \R^d\,$ is a vector of coefficients, $\,\eps \in \R\,$ is a noise term, and $\,y\in \R\,$ is a response.
Suppose we observe $\,n\,$ pairs $\,\{ (\bx_i, y_i) \}_{i=1}^n\,$ from this model with the assumption that the underlying noise terms $\,\{\eps_i\}_{i=1}^n\,$ are i.i.d. random variables with mean zero.
In matrix notations, introducing
\begin{equation}
	\begin{aligned}
		\YY = \begin{bmatrix} y_1 \\ \vdots \\ y_n \end{bmatrix} \,\in \R^n ,\;\;\;
		\XX = \begin{bmatrix} \bx_1^\T \\ \vdots \\ \bx_n^\T \end{bmatrix} \,\in \R^{n\times d},\;\;\;
		\beps = \begin{bmatrix} \eps_1 \\ \vdots \\ \eps_n \end{bmatrix} \,\in \R^n,
	\nonumber
	\end{aligned}
\end{equation}
we rewrite our model as
\begin{equation}
	\begin{aligned}
		\YY= \XX \bbeta + \beps.
	\label{model}
	\end{aligned}
\end{equation}

Define the covariance matrix of the data $\,\Se \eqdef n^{-1} \sum_{i=1}^n \bx_i\bx_i^\T = n^{-1} \XX^\T \XX\in \R^{d\times d}$. Our goal is to estimate the unknown $\,\bbeta\,$ and analyze the quality of estimation in two different settings:
\begin{itemize}
	\item \textit{Fixed design}. That means, the vectors of covariates $\,\{ \bx_i \}_{i=1}^n\,$ are deterministic (without loss of generality we assume $\,\sum_{i=1}^n \bx_i = 0$). A standard way to measure the error of an estimator $\,\widetilde{\bbeta}\,$ in this case is the \textit{mean squared error} (MSE):
\begin{equation}
	\begin{aligned}
		\mathsf{MSE}(\widetilde{\bbeta})  \,\eqdef\, \frac{1}{n}\sum\limits_{i=1}^n (\bx_i^\T \widetilde{\bbeta} - \bx_i^\T \bbeta)^2 = \frac{1}{n} \| \XX\widetilde{\bbeta} - \XX\bbeta\|_2^2 = (\widetilde{\bbeta} - \bbeta)^\T \Se (\widetilde{\bbeta} - \bbeta).
	\nonumber
	\end{aligned}
\end{equation}
This differs from the prediction error for the fixed design by an amount of $\,\E[ \varepsilon^2]\,$ (independent of the model) and reflects the model error in the prediction for this case.

	\item \textit{Random design}. In this scenario the vectors of covariates $\,\{ \bx_i \}_{i=1}^n\,$ come independently from some unknown distribution with mean zero (for simplicity) and the covariance matrix $\,\St \eqdef \E[\bx\bx^\T] \in \R^{d\times d}$. We are interested in the performance of an estimator $\,\widetilde{\bbeta}\,$ measured by the \textit{expected prediction error} (PE):
\begin{equation}
	\begin{aligned}
		\mathsf{PE}(\widetilde{\bbeta})  \,\eqdef\, \E\left[ (\bx^\T \widetilde{\bbeta} - \bx^\T \bbeta)^2 \right] = (\widetilde{\bbeta} - \bbeta)^\T \St (\widetilde{\bbeta} - \bbeta).
	\nonumber
	\end{aligned}
\end{equation}
This quantity  differs also from the prediction error for random design by $\,\E[\varepsilon^2]\,$ and equals the excess risk
\begin{equation}
	\begin{aligned}
		\mathsf{PE}(\widetilde{\bbeta})  = \E\left[ (y - \bx^\T \widetilde{\bbeta})^2 \right] - \E\left[ (y - \bx^\T \bbeta)^2 \right] .
	\nonumber
	\end{aligned}
\end{equation}
\end{itemize}
In the sequel we refer to these quantities simply as the (\textit{absolute}) MSE and PE.  The reason we give two names is to differentiate their statistical behavior in high dimensions and to avoid confusions at various discussions.
We will also motivate and analyze the  \textit{relative errors} $\,\mathsf{MSE}(\widetilde{\bbeta})/\mathsf{MSE}(0)\,$ and $\,\mathsf{PE}(\widetilde{\bbeta})/\mathsf{PE}(0)$. Surprisingly, the relative errors in this form, appearing naturally and being well-motivated, have not gained much attention in the literature (some related, but still quite different relative measures of performance in different contexts were considered in \cite{Dobriban1, Dobriban2, Dobriban3}). As we will see, the importance of the relative errors arises as a high-dimensional effect.

Being a fundamental statistical problem, the \textit{high-dimensional} linear regression has been approached in various ways. Probably the simplest method is Principal Component Regression (PCR). The idea is to reduce the dimension first via Principal Component Analysis (PCA) \citep{Pearson}, and then use several leading principal components as covariates to construct the least squares estimator. This approach heavily relies on a very strong assumption that the response depends on just a few leading principal components of the data. Various examples were provided where PCR performs poorly, see \cite{Jolliffe}. Another related idea to use supervised principal components was proposed by \cite{SPCR}.  See also Chapters 10 and 11 of \cite{FLZZ20} for further discussions and applications.

Over the past two decades, the main approach to tackle high-dimensionality of the problem has been the sparsity assumption on $\,\bbeta$, which is reasonable for many real-world applications. This has given rise to such model selection procedures  as LASSO \citep{LASSO}, SCAD \citep{SCAD}, Least Angle Regression \citep{LAR}, Dantzig selector \citep{Dantzig}, SLOPE \citep{SLOPE}. The list of papers devoted to these methods is too long to be presented here, so we just mention some of them: \cite{Ritov,Precond,Dantzig2,Dalalyan,SLOPE2}. Typically, a theoretical analysis of such procedures requires assumptions on the design like restricted isometry property (RIP), restricted eigenvalue (RE) condition, incoherence. These assumptions are needed to make sure that the correlations among subsets of features are small. See \cite{vandeGeer} for an overview of conditions used in the theoretical analysis of sparse linear regression. We also refer to Chapters 3--5 of \cite{FLZZ20} for an overview of existing methods and theoretical results for high-dimensional linear regression under sparsity.

The methods from the previous two paragraphs were developed (partially) due to a belief that the unconstrained least squares estimator is hopeless in high dimensions.
Recent papers by \cite{Bartlett} and later \cite{Chinot} have shown that the minimum $\ell_2$-norm least squares estimator $\,\widetilde{\bbeta}^{LS} \eqdef (\XX^\T \XX)^+\XX^\T \YY\,$ (where $\,(\XX^\T \XX)^+\,$ is the generalized inverse of the matrix $\,\XX^\T \XX$) can generalize well (i.e. have small absolute PE) even interpolating the training data -- they call this phenomenon ``benign overfitting''. To deliver convergence of $\,\mathsf{PE}(\widetilde{\bbeta}^{LS})\,$ to zero they require quite specific conditions on $\,\St$: the decay of its eigenvalues should be fast, but not too fast. These requirements are quantified by two notions of effective rank of $\,\St$.
 A closely related paper by \cite{Montanari} also studies \,$\widetilde{\bbeta}^{LS}$, but in the regime $\,p/n \to \gamma$. They are interested in the dependence of $\,\mathsf{PE}(\widetilde{\bbeta}^{LS})\,$ on $\,\gamma$, and focus on the case $\,\lambda_{\min}(\St) \geq c > 0$, e.g. considering isotropic and equicorrelation covariances. One more work on the topic is \cite{Belkin}, where the authors try to mathematically explain double descent phenomenon in several different models.

Going beyond the linear regression, one basic idea to approach general (nonlinear) regression problem $\,y = f(\bx) + \eps\,$ is to decompose the regression function
$\,f(\bx) \approx \sum_{j=1}^D \beta_j \psi_j(\bx) = \bpsi(\bx)^\T \bbeta\,$ over a Fourier basis, wavelet basis, or basis of eigenfunctions in reproducing kernel Hilbert space (RKHS), denoted here by $\,\psi_1(\cdot), \ldots, \psi_D(\cdot)$. This reduces the nonlinear regression problem to a linear one (potentially very high-dimensional or even infinite-dimensional), and allows to apply the existing methods.
Though we do not pursue the analysis of nonlinear regression in our work, this setting provides an excellent motivation for the main structural assumptions we make in our results. One of them is \textit{fast decay of the eigenvalues} of $\,\St\,$ (or $\,\Se$). For instance, we require that the \textit{effective rank}
\begin{align}
	\reff[\St] \eqdef \frac{\Tr[\St]}{\| \St\|} \;\;\;\left(\text{or }\;\reff[\Se] \eqdef \frac{\Tr[\Se]}{\|\Se\|} \right)
	\nonumber
\end{align}
can be well-controlled. The spectral decay has been observed in real-world datasets (e.g. MNIST, see Figure 5 in \cite{Rakhlin}; financial data in \cite{FinData1}, Figure 5; economics data in \cite{FinData2}, Figure 5), which makes our assumption reasonable. Importance of the eigenvalue decay (not only of the covariance, but of general kernel matrices) is highlighted in \cite{Rakhlin}, where such kind of conditions on the spectral decay is called ``favorable data geometry''. Moreover, \cite{EigenPro, BelkinPMLR} analyze the super-polynomial decay of eigenvalues of smooth kernel matrices. Going even further in deep learning literature, neural tangent kernels also exhibit the spectral decay, as shown by \cite{NTK}, among others. However, the fast eigenvalue decay is not the only motivation behind our work; another structural assumption that can make our results meaningful is a fast decay of regression coefficients in eigenbasis. This is a very well-understood condition as well: it is well-known that Fourier coefficients decay at a polynomial rate, where the degree depends on the smoothness of the underlying regression function. In addition, the decay of coefficients in RKHS was studied by \cite{BelkinPMLR}.

With these structural assumptions, the idea behind our family of estimators $\,\ebeta\,$ is quite natural: in some eigendirections (e.g. the ones that correspond to small eigenvalues of $\,\St$) we do not gain much by estimating the associated coefficient, so it makes sense to estimate only those components that allow to significantly reduce the error; specifically, we use thresholding to cut the components associated with the insignificant directions off. When applied to the nonlinear regression with wavelet basis, one estimator from the proposed family coincides with the soft thresholding approach studied in the series of papers by \cite{Donoho1,Donoho2,Donoho3,Donoho4,Donoho5}, among others. We highlight that we will not require sparsity of $\,\bbeta\,$ or any restrictive conditions on the design.

Let us summarize some motivations behind our work:
\begin{itemize}
\item Our methods can be viewed as an attempt to fix PCR by relaxing its restrictive assumptions. Instead of working with the several leading principal components, our estimators automatically screen for the most important principal components, not necessarily the leading ones.
\item Remarkably, the procedures that we propose are a modification of LASSO, so one can view this work as an attempt to extend LASSO to non-sparse high-dimensional linear regression.
\item Though the papers by \cite{Bartlett} and \cite{Chinot} do not advocate the use of interpolating estimators rather justify why the overfitting may not be harmful (very relevant question in modern deep learning research), we aim to show that there is no necessity to give up the in-sample denoising quality to get good bounds on the prediction error.  In fact, our numerical results show that our method is better the least squares estimator in various situations.
\end{itemize}
Main contributions of this paper are:
\begin{itemize}
	\item We propose a new method for high-dimensional linear regression, called \textit{Natural Canonical Thresholding} (NCT), in Section~\ref{S:estimator}. The connection of this approach to LASSO and PCR is discussed in Section~\ref{S:LASSO} and Section~\ref{S:PCR}. In Section~\ref{S:extension} we extend the suggested procedure and present a richer family of estimators, called \textit{Generalized Canonical Thresholding} (GCT). Our estimators $\,\ebeta\,$ are given via an explicit expression and do not require any optimization. Though each estimator from the family has one hyperparameter, it can be tuned in an efficient way via cross-validation as shown in Section~\ref{S:tuning}. An optimality result for the cross-validation is also presented.
	\item We provide theoretical guarantees for the NCT estimator in the fixed design and random design settings in Section~\ref{S:theory}. The presented tight bounds have two-fold meaning:
	\begin{itemize}
		\item For the absolute errors $\,\mathsf{MSE}(\ebeta)\,$ and $\,\mathsf{PE}(\ebeta)$, studied in Section~\ref{S:FixedDesign} and Section~\ref{S:RandomDesign}, we state explicit sufficient conditions of the form ``the eigenvalues of $\,\Se\,$ or $\,\St\,$ decay fast enough'' to ensure convergence in high dimensions. No conditions on $\,\bbeta\,$ are imposed in this case.
		\item For the relative errors $\,\mathsf{MSE}(\ebeta)/\mathsf{MSE}(0)\,$ and $\,\mathsf{PE}(\ebeta)/\mathsf{PE}(0)$, motivated in Section~\ref{S:RelMotivation}, our bounds factorize into the newly defined notion of the \textit{joint effective dimension}, the signal-to-noise ratio, and a vanishing factor. To get good rates for the relative errors in high dimensions it is not enough to assume fast decay of eigenvalues of $\,\Se\,$ or $\,\St\,$ alone, and we need to impose conditions of $\,\St\,$ and $\,\bbeta\,$ together (Section~\ref{S:JED1}), which is reflected by the joint effective dimension that we analyze (Section~\ref{S:JED2}).
	\end{itemize}
	We introduce parameter classes for linear regression problems with bounded joint effective dimension, and demonstrate a minimax optimality of the NCT estimator over this classes in the fixed design setting (Section~\ref{S:minimax}).	
	
	Theoretical analysis of the GCT estimator is not that insightful, however we still present and discuss a tight bound on the absolute error $\,\mathsf{MSE}(\ebeta)\,$ (Section~\ref{S:theoryGCT}).
	
	\item Numerical experiments, conducted in Appendix~\ref{S:experiments}, confirm good performance of NCT and especially GCT in comparison with other existing methods.
\end{itemize}
All the proofs are collected in Appendix~\ref{S:mainproofs} and Appendix~\ref{S:auxproofs}.
We conclude this section with defining some notations used throughout the work.

For a positive integer $\,k$, we write $\,[k]\,$ as shorthand for the set $\,\{ 1, 2, \ldots, k\}$.
We use $\,\Oo_{k\times l}\,$ for $\,k\times l\,$ matrix of zeros and $\,\Id_k\,$ for the identity matrix of size $\,k \times k$.
For a vector $\,\ba = [a_1, \ldots, a_k]^\T \in \R^k\,$ and $\,q > 0$, the standard $\ell_q$-(pseudo)norm in $\,\R^k\,$ is $\,\| \ba \|_q \eqdef \left( \sum_{j=1}^k |a_j|^q \right)^{1/q}$.  We use the following convention for the $\,\ell_0$-pseudonorm: $\,\| \ba\|_0^0 = \| \ba \|_0 \eqdef \sum_{j=1}^k \mathbbm{1}\{ a_j \neq 0\}$. Also, the $\,\ell_\infty$-norm is $\,\| \ba\|_{\infty} = \max_{j\in[k]} |a_j|$.
For a matrix $\,\bA$, let
$\,\| \bA\|\,$ be the spectral norm (the largest singular value), $\,\rank[\bA]\,$ be the rank, and (if $\,\bA\,$ is square) $\,\Tr[\bA]\,$ be the trace.

For sequences $\,a_n\,$ and $\,b_n\,$ the relation $\,a_n \lesssim b_n\, $ means that there exists a positive absolute constant $\,C\,$ such that $\, a_n \leq Cb_n\,$ for all $\,n$, while $\,a_n \asymp b_n\,$ means that $\,a_n \lesssim b_n\,$ and $\,b_n \lesssim a_n$. Oftentimes, similar notations will be used to denote inequalities/equalities up to a multiplicative constant not across the sample size $\,n$, but across all indices $\,j\in [d]\,$ or $\,j \in [\min(d, n)]$. The exact meaning in each case will be clear from the context. By $\,c, C\,$ we denote absolute constants which may differ from place to place.

Throughout the work, $\,\bbeta\,$ stands for the true vector of regression coefficients in the model \eqref{model}, $\,\ebeta\,$ stands for our NCT or GCT estimators proposed in the next section, and a generic estimator is denoted as $\,\widetilde{\bbeta}$. If we want to refer to an abstract vector, for example, as a variable in an optimization problem, we will be using $\bbeta^\prime$.

\section{Estimators} \label{S:estimator}
Let $\,r \eqdef \rank[\Se]$. Typically, $\,r = \min(d, n)$. Consider the SVD of the data matrix $\,\XX\,$ (scaled by $n^{-1/2}$):
\begin{equation}
	\begin{aligned}
		\frac{1}{\sqrt{n}}\,\XX = \eV \eL \eU^\T,
	\nonumber
	\end{aligned}
\end{equation}
where $\,\eL = \mathsf{diag}\left( \widehat{\lambda}_1^{1/2}, \ldots, \widehat{\lambda}_{r}^{1/2} \right) \,\in \R^{r\times r}\,$ is a diagonal matrix consisting of the non-zero singular values of $\,n^{-1/2}\XX\,$ in non-increasing order, the columns of $\,\eV \,\in \R^{n\times r}\,$ are the left singular vectors of $\,\XX$, and the columns of $\,\eU = [\widehat{\bu}_1, \ldots, \widehat{\bu}_{r}] \,\in \R^{d\times r}\,$ are the right singular vectors of $\,\XX$.
Alternatively, it is also convenient to think of the eigendecomposition of $\,\Se$:
\begin{equation}
	\begin{aligned}
		\Se = \eU \eL^2 \eU^\T,
	\nonumber
	\end{aligned}
\end{equation}
where now the diagonal entries $\,\widehat{\lambda}_1, \ldots, \widehat{\lambda}_{r}\,$ of $\,\eL^2\,$ are interpreted as the non-zero eigenvalues of $\,\Se\,$ in non-increasing order, and the columns $\,\widehat{\bu}_1, \ldots, \widehat{\bu}_{r}\,$ of $\,\eU\,$ are the corresponding eigenvectors of $\,\Se$.
Similarly, in what follows we will actively use the eigendecomposition of $\,\St$:
\begin{align}
	\St  = \bU \bLambda^2 \bU^\T,
	\nonumber
\end{align}
where $\,\bLambda^2 = \mathsf{diag}(\lambda_1, \ldots, \lambda_d) \,\in \R^{d\times d}\,$ is a diagonal matrix consisting of the eigenvalues of $\,\St\,$ in non-increasing order, and $\,\bU = [\bu_1, \ldots, \bu_d]\,\in\R^{d\times d}\,$ consists of the corresponding eigenvectors.

We introduce the following definition, which will be extensively used throughout the work.
\begin{definition}  \label{canonical}
Rewrite the linear regression model $\,\YY = \XX\bbeta + \beps\,$ as
\begin{equation}
\begin{aligned}
\YY &=(\XX \eU \eL^{-1}) (\eL \eU^\T \bbeta) + \beps
= \ZZ \btheta + \beps.
\nonumber
\end{aligned}
\end{equation}
We call this representation the \textit{canonical form} of the linear regression model.
Here $\,\ZZ \,\eqdef\, \XX \eU \eL^{-1} \,\in \R^{n\times r}\,$ is the standardized design matrix and
\begin{align}
\btheta \,\eqdef\, \eL \eU^\T \bbeta \,\in \R^r
\nonumber
\end{align}
 is the new vector of coefficients, called the \textit{canonical regression coefficients vector}, or simply \textit{canonical coefficients}.
\end{definition}

\noindent Note that the standardized design coincides with the left singular vectors $\,\ZZ = n^{1/2} \widehat\bV\,$ and satisfies the orthonormality constraints: $\,n^{-1} \ZZ^\T \ZZ = \Id_r$. Hence, the least-squares estimator for the canonical parameter is $\,\widetilde{\btheta}^{LS} = n^{-1}\ZZ^\T \YY = n^{-1} \eL^{-1} \eU^\T \,\XX^\T \YY$.  As in \cite{Fan96}, we further regularize the estimated canonical coefficients vector by either thresholding or truncation (setting higher indices to zero), depending whether $\,\btheta\,$ is approximately sparse or concentrates on the leading principal components.  Transforming the canonical parameter back to the original domain leads to the canonical thresholding estimator or principal component regression estimator, as to be further elaborated below.  Our work pushes forward the interactions between the canonical parameters and the design matrix.

More specifically, in the canonical domain our estimator looks like
\begin{align}
	\widehat{\btheta} \,\eqdef\, \mathsf{SOFT}_{\tau}\left[ \widetilde{\btheta}^{LS} \right],
\nonumber
\end{align}
which in the original domain brings us to the \textit{Natural Canonical Thresholding} (NCT) estimator of $\,\bbeta$, defined as
\begin{equation}
	\begin{aligned}
		\ebeta \eqdef \eU \eL^{-1} \,\mathsf{SOFT}_{\tau}\left[ \eL^{-1} \eU^\T \,\frac{\XX^\T \YY}{n}\right],
	\label{estimator}
	\end{aligned}
\end{equation}
where $\,\mathsf{SOFT}_\tau[z] \eqdef z\cdot \max\left( 1 - \tau/|z|, 0\right)\,$ is the \textit{soft thresholding} function applied component-wise and $\,\tau \geq 0\,$ is a hyperparameter to be chosen.
Note that in an overparameterized setting $\,d > n\,$ there are infinitely many $\,d$-dimensional  vectors leading to the same canonical coefficients. Specifically, for any vector $\,\bh \in \R^d\,$ the estimator $\,\ebeta + (\Id_d - \eU \eU^\T) \bh\,$ leads to the same vector of canonical coefficients: $\,\eL \eU^\T [\ebeta + (\Id_d - \eU \eU^\T) \bh] = \widehat{\btheta}$. While having the same in-sample fit, these estimators may produce different predictions for new points $\,\bx$. Among all of these estimators, $\,\ebeta\,$ from \eqref{estimator}, namely the estimator with $\,\bh=0$, has the smallest $\,\ell_2$-norm, arguably being the most reasonable choice.

Let us explain some intuition behind the NCT estimator.
Neglecting the noise term, we plug $\,\YY \approx \ZZ\btheta\,$ in, use the eigendecomposition of $\,\Se\,$ and get
\begin{equation}
	\begin{aligned}
		\widehat\btheta \approx \mathsf{SOFT}_{\tau}\left[ \btheta \right].
	\nonumber
	\end{aligned}
\end{equation}
Due to the structure of our error (e.g. in the fixed design case)
\begin{equation}
	\begin{aligned}
		\mathsf{MSE}(\ebeta)  = \| \eL \eU^\T\ebeta - \eL \eU^\T\bbeta \|_2^2
 = \| \widehat\btheta - \btheta\|_2^2
	\nonumber
	\end{aligned}
\end{equation}
and since we assume the eigenvalue decay, it is likely that some components $\,\theta_j\,$ do not play role, and the estimation of them with $\,\widehat{\theta}_j\,$ is not that important. Hence, it is reasonable to cut such insignificant components off, and this is exactly what the thresholding does. This reduces the variance of the estimator, while not increasing the bias by too much.

We also mention that when $\,\tau = 0$, our estimator reduces to the minimum $\ell_2$-norm least squares solution
\begin{equation}
	\begin{aligned}
		\widehat\btheta = \widetilde{\btheta}^{LS}\;\;\;\text{ and }\;\;\;\ebeta = \widetilde{\bbeta}^{LS} = \Se^+\,\frac{\XX^\T \YY}{n} = (\XX^\T \XX)^+\, \XX^\T \YY
	\nonumber
	\end{aligned}
\end{equation}
(unbiased or slightly biased, but with large variance), while $\,\tau = +\infty\,$ corresponds to the trivial solution $\,\widehat\btheta = 0\,$ and $\,\ebeta = 0\,$ (very biased, but with zero variance).

\subsection{Relation to LASSO} \label{S:LASSO}
Recall that the standard LASSO estimator is a solution of the following optimization problem:
\begin{equation}
	\begin{aligned}
		\widetilde{\bbeta}^{LASSO} \in \arg\min\limits_{\bbeta^\prime \in \R^d} \left\{ \frac{1}{2n} \| \YY  - \XX\bbeta^\prime \|_2^2 + \tau \| \bbeta^\prime \|_1\right\}.
	\nonumber
	\end{aligned}
\end{equation}
In practice one usually standardizes the columns of $\,\XX\,$ so that they are on the same scale and the coefficients corresponding to different covariates are penalized equally. Now imagine that we standardize our $\,\XX\,$ in the canonical manner as in Definition~\ref{canonical}.
If we run LASSO for the vector of coefficients $\,\btheta$, then the solution is expressed via the soft thresholding:
\begin{equation}
	\begin{aligned}
		\widetilde{\btheta}^{LASSO} &\,=\, \arg\min\limits_{\btheta^\prime \in \R^r} \;\left\{ \frac{1}{2n} \| \YY  - \ZZ\btheta^\prime \|_2^2 + \tau \| \btheta^\prime \|_1\right\}
		=\mathsf{SOFT}_\tau\left[ \frac{\ZZ^\T \YY}{n} \right],
	\nonumber
	\end{aligned}
\end{equation}
which is exactly our estimator $\,\widehat\btheta\,$ in the canonical domain.
Going back to $\,\ebeta = \eU \eL^{-1} \,\widetilde{\btheta}^{LASSO}\,$ we recover the NCT estimator~\eqref{estimator}. The solution is the soft thresholding on the canonical regression coefficients. This is why we call the method canonical thresholding.

We also note that the NCT estimator can be represented as the min-$\ell_2$-norm solution of the optimization problem
\begin{equation}
	\begin{aligned}
		\ebeta \in \arg\min\limits_{\bbeta^\prime \in \R^d} \left\{ \frac{1}{2n} \| \YY  - \XX\bbeta^\prime \|_2^2 + \tau \| \eL\eU^\T \bbeta^\prime \|_1\right\}.
	\nonumber
	\end{aligned}
\end{equation}
Our estimator is nothing more than LASSO penalized on the canonical regression coefficients.

\subsection{Relation to PCR} \label{S:PCR}
Principal Component Regression (PCR) approaches the high dimensionality of the problem by taking only $\,m\,$ ($m < r = \min(d,n)$) leading principal components of the original data.
The new design matrix becomes
\begin{equation}
	\begin{aligned}
		\ZZ_m =  \XX \eU_{\leq m} \eL_{\leq m}^{-1} \,\in \R^{n \times m},
	\nonumber
	\end{aligned}
\end{equation}
where $\,\eU_{\leq m} \in \R^{d\times m}\,$ consists of the first $\,m\,$ columns of $\,\eU\,$ and $\,\eL_{\leq m} \in \R^{m \times m}\,$ is $\,m\times m\,$  leading principal submatrix of $\,\eL$.
The new regression problem
\begin{equation}
	\begin{aligned}
		\YY &= \ZZ_m \btheta_m + \beps
	\nonumber
	\end{aligned}
\end{equation}
is solved via the least squares, yielding the solution
\begin{equation}
	\begin{aligned}
		\widetilde{\btheta}^{LS}_m \,=\, \frac{\ZZ_m^\T\,\YY}{n} \,\in \R^{m},
	\nonumber
	\end{aligned}
\end{equation}
and thus
\begin{equation}
	\begin{aligned}
		\widetilde{\bbeta}^{PCR} \,\eqdef\, \eU_{\leq m} \eL_{\leq m}^{-1} \,\widetilde{\btheta}^{LS}_m \;\in \R^d.
	\nonumber
	\end{aligned}
\end{equation}
Note that $\,\widetilde{\btheta}^{LS}_m\,$ is essentially the first $\,m\,$ components of $\,\widetilde{\btheta}^{LS}$,
and we can express
\begin{equation}
	\begin{aligned}
		\widetilde{\bbeta}^{PCR} = \eU \eL^{-1} \,\mathsf{ZERO}_m\left[ \eL^{-1} \eU^\T \,\frac{\XX^\T \YY}{n}\right],
	\nonumber
	\end{aligned}
\end{equation}
where $\,\mathsf{ZERO}_m[\bz] = [z_1, \ldots, z_m, 0, \ldots, 0]^\T\,$ is the operator zeroing out all the components of a vector $\,\bz \in \R^r\,$ except the first $\,m$. (Again, in an overparameterized case this estimator has the smallest $\ell_2$-norm among all estimators leading to the canonical coefficients $\,\widetilde{\btheta}^{LS}_m$.) The similarity of the PCR estimator to the NCT estimator~\eqref{estimator} is now clear.
While PCR blindly selects the coefficients corresponding to the first $\,m\,$ principal components, our procedure screens for the most important principal directions, which may be different from the leading ones, and leaves only those with significant contribution exceeding $\,\tau$. However, if there is a strong prior indicating the canonical coefficients spike at the principal directions, then of course PCR should also be a suitable procedure, and NCT will simply mimic its behavior, with some small costs.  The above contrasts between truncation and thresholding appear in \cite{Fan96} under a simpler model.

\subsection{Extension to family of canonical thresholding estimators} \label{S:extension}
PCR focuses only on the estimators on the principal component directions, and NCT does not have any preferences.  We now propose a family of canonical thresholding estimators to bridge these two extremes, progressively putting more preferences on the principal directions.  In addition, we also generalize the thresholding function.

First, the soft thresholding can be replaced by \textit{generalized thresholding rules} (see e.g. Definition 9.3 in \cite{FLZZ20}), introduced for completeness in the following definition.
\begin{definition} \label{thres}
 The function $\,\mathsf{T}_\tau: \R \to \R\,$ is called a generalized thresholding function, if
 \begin{enumerate}[(i)]
 	\item $\left|\mathsf{T}_\tau[z]\right| \leq c|z^\prime|\,$ for all $\,z, z^\prime\,$ satisfying $\,|z-z^\prime| \leq \tau/2\,$ and some constant $\,c$;
 	\item $\left|\mathsf{T}_\tau[z] - z\right| \leq \tau\,$ for all $\,z \in  \R$.
 \end{enumerate}
 The parameter $\,\tau\,$ is called the thresholding level.
\end{definition}

Second, if there is some prior that the spike canonical coefficients are more likely to be in the lower principal components rather than in the higher ones, we can introduce additional multiplicative weights, equal to the eigenvalues raised to a non-negative power $\,\varphi/2$, under thresholding to reflect this preference.  This is equivalent to applying a larger thresholding on higher principal components, with $\varphi$ controlling the degree.
Implementing this strategy, we propose the following more general family of estimators, parameterized by $\,\varphi \geq 0$:
\begin{equation}
	\begin{aligned}
		\widehat{\btheta} \,\eqdef\, \eL^{-\varphi} \;\mathsf{T}_{\tau}\left[ \eL^{\varphi} \;\widetilde{\btheta}^{LS}\right]
	\nonumber
	\end{aligned}
\end{equation}
in the canonical domain, or
\begin{equation}
	\begin{aligned}
		\ebeta \,\eqdef\, \eU \eL^{-1-\varphi} \;\mathsf{T}_{\tau}\left[ \eL^{-1+\varphi}\,\eU^\T \,\frac{\XX^\T \YY}{n}\right]
	\label{estimator2}
	\end{aligned}
\end{equation}
in the original domain. Here $\,\mathsf{T}_\tau[\bz]\,$ is a generalized thresholding function from Definition~\ref{thres}  applied component-wise and $\,\tau \geq 0\,$ is a hyperparameter to be chosen. The estimators from this family are called the \textit{Generalized Canonical Thresholding} (GCT) estimators.

When
$\,\varphi = 0\,$ and the soft thresholding  function is used, GCT reduces to the NCT estimator~\eqref{estimator}.
The intuition behind GCT is somewhat similar to NCT: the estimators automatically screen the most significant principal components. However, the choice of $\,\varphi\,$ allows to put different importance to eigenvalue $\,\widehat\lambda_j\,$ and projection $\,\widehat\bu_j^\T\bbeta\,$ when deciding whether to threshold $\,j$-th principal component or not. While in NCT this importance is calibrated in accordance to the scaling appearing in the decomposition of the absolute MSE (this justifies the word ``Natural'' in the name), GCT with $\,\varphi > 0\,$ gives more weight to the leading canonical coefficients, making the method closer to PCR, and selects other components when absolutely necessary.

There is one common situation where PCR is preferable:  pervasive latent factors that drives both the covariates and the response \citep{FWY17}.  In this case, the principal components are used to learn latent factors and these learned factors are used as the covariates for regressing the response $y$.  This leads to PCR.  We introduce GCT to better accommodate this situation. Figure~\ref{F:NCTvsPCR} visually illustrates the conceptual difference between NCT, GCT, and PCR approaches. We highlight once again that the selection of the components in NCT and GCT is data-driven, unlike in PCR where the selected components are fixed before the data are observed (modulo cross-validation, which helps to select the number of leading components, but not the components itself).

\begin{figure}
     \begin{subfigure}{\textwidth}
         \centering\includegraphics[width=0.6\textwidth]{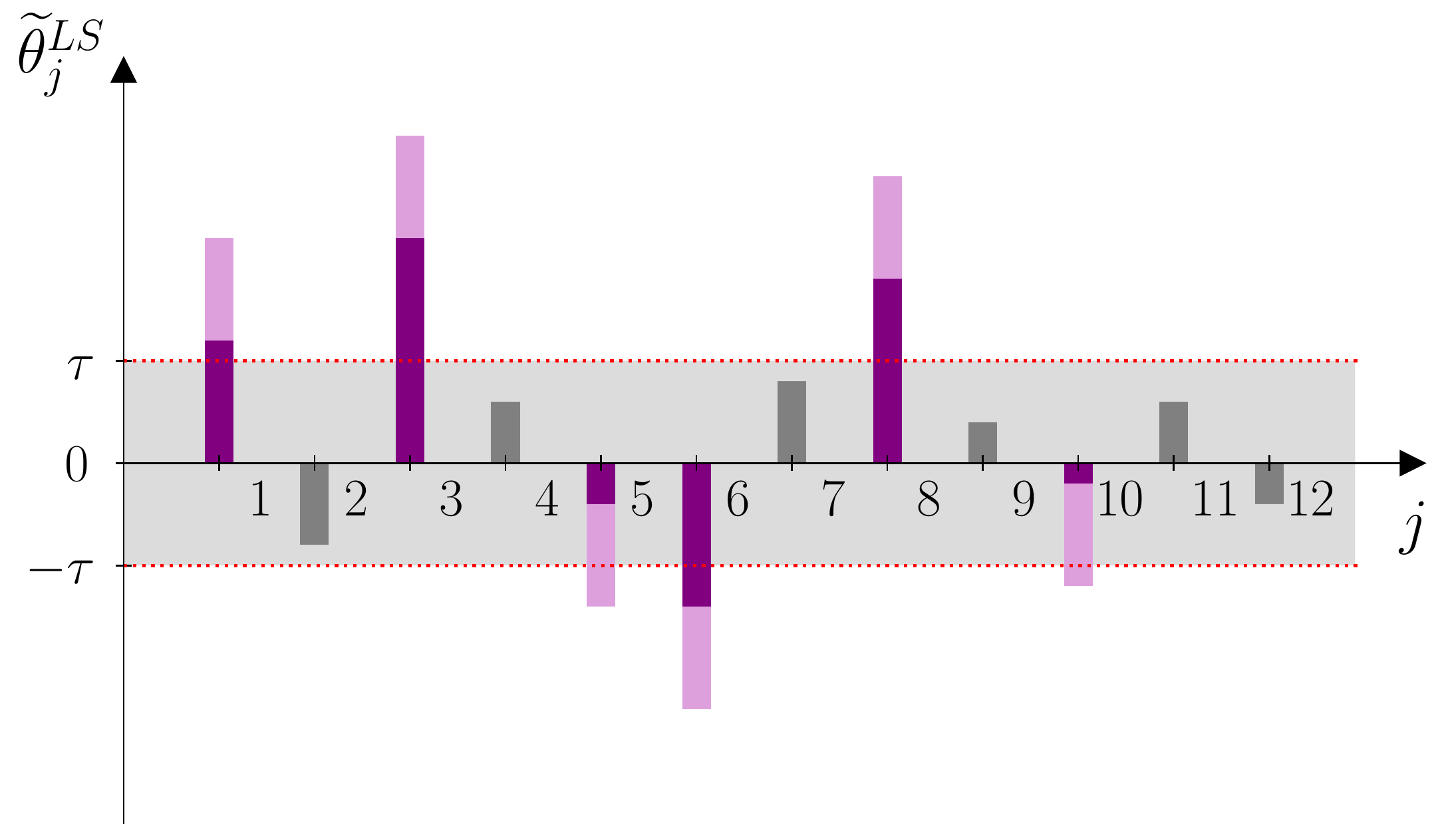}
         \vspace{-0.3cm}
         \caption{Natural Canonical Thresholding: light purple indicates the portion of the active coefficients subtracted during soft thresholding, and purple depicts the remaining surviving coefficients;}
     \end{subfigure}
     \vfill
     \vspace{0.2cm}
     \begin{subfigure}{\textwidth}
         \centering\includegraphics[width=0.6\textwidth]{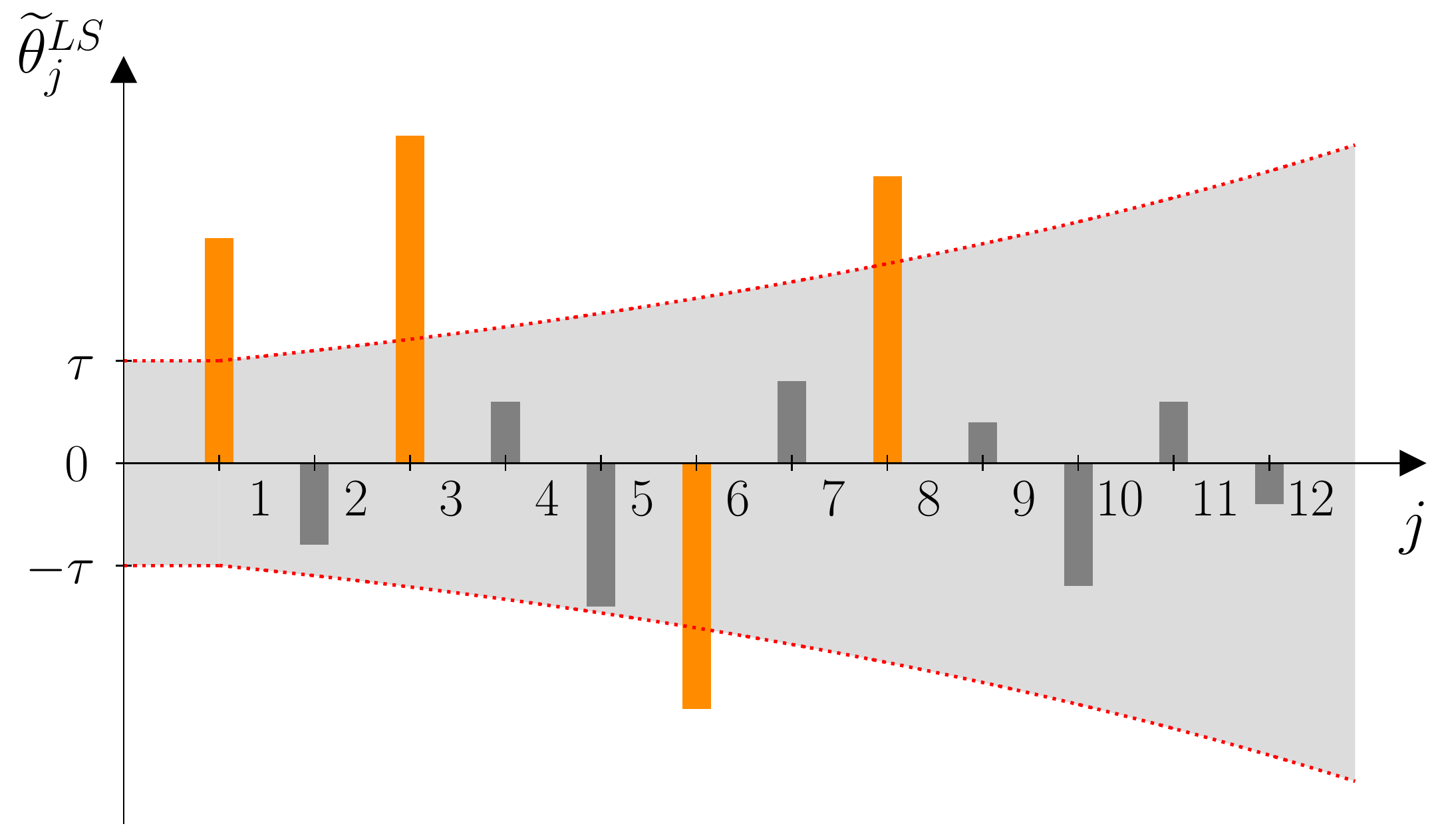}
         \vspace{-0.2cm}
         \caption{Generalized Canonical Thresholding with the hard thresholding rule, $\varphi = 1$, and assumed eigenvalue decay $\widehat{\lambda}_j = 1.21^{(-j+1)}$;}
     \end{subfigure}
     \vfill
     \vspace{0.2cm}
     \begin{subfigure}{\textwidth}
         \centering\includegraphics[width=0.6\textwidth]{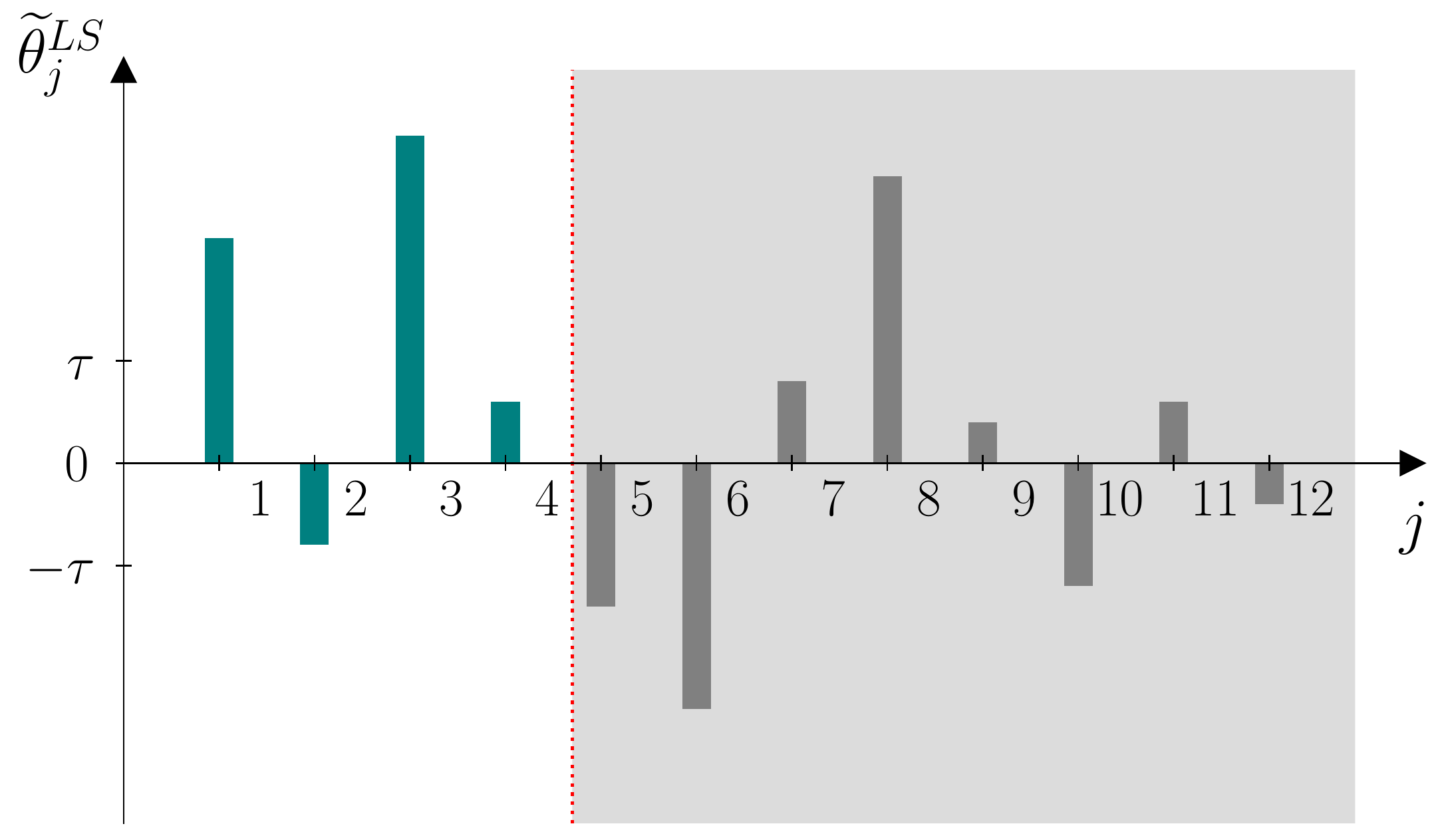}
         \vspace{-0.2cm}
         \caption{Principal Component Regression with $m = 4$;}
        \end{subfigure}
       \caption{Comparison of NCT, GCT, and PCR estimators on an artificial example with $r = 12$. On the horizontal axes -- an index of the component $j$, on the vertical axes -- the coefficient of the canonical least squares $\widetilde{\theta}^{LS}_j$. The red dotted lines depict the thresholding boundaries. The coefficients falling into the shaded area are thresholded/truncated to zero and depicted in gray. The coefficients surviving the thresholding/truncation are depicted in purple, orange, and teal, respectively.}
        \label{F:NCTvsPCR}
\end{figure}

It turns out that the theoretical results for GCT are not that nice and insightful as for NCT.
However, in practice we observed that GCT may behave much better in some scenarios, as to be shown in the corresponding section. In principle, one can even tune $\,\varphi\,$ in addition to tuning $\,\tau\,$ via cross-validation, which might further enhance the practical utility of the procedure.

\section{Theoretical properties of the NCT estimator} \label{S:theory}

The first condition needed for our theoretical analysis is the following assumption on the noise, which will be used in both fixed design and random design settings.

\begin{assumption}[Sub-Weibull noise] \label{A:Noise}
	The noise vector $\,\beps\,$ is independent of $\,\XX\,$ and is jointly sub-Weibull random vector with parameter $\,0 < \alpha \leq 2\,$ (see \cite{Weibull}). That is, there exists  $\,\sigma < \infty\,$ such that
    \begin{equation}
            \begin{aligned}
            	\sup\limits_{\|\bw\|_2=1} \| \bw^{\top} \beps\|_{\psi_\alpha} \leq \sigma,
                \nonumber
            \end{aligned}
        \end{equation}
    where $\,\|\cdot\|_{\psi_\alpha}\,$ is the Orlicz norm for $\,\psi_\alpha = e^{x^\alpha} - 1$.
    The following tail bound takes place:
    \begin{equation}
            \begin{aligned}
            	& \Prob\left[ |\bw^{\top} \beps| \geq t \right] \leq 2\,\exp\left(-\left(t/\sigma\right)^\alpha\right)
                \nonumber
            \end{aligned}
        \end{equation}
    for all $\,\bw, \, \|\bw\|_2= 1\,$ and $\,t > 0$.
\end{assumption}
\noindent This allows to go slightly beyond sub-Gaussian and sub-Exponential tails.
For i.i.d. sub-Gaussian noise, $\,\sigma^2\,$ coincides (up to a multiplicative constant) with the variance of a single $\,\eps_i$. So, $\,\sigma\,$ can be interpreted as the magnitude of the noise.

Define the \textit{signal-to-noise ratio} of the linear regression problem in the fixed design setting as
	\begin{equation}
	\begin{aligned}
		\snr \,\eqdef\, \left( \frac{n^{-1} \sum_{i=1}^n (\bx_i^\T \bbeta)^2}{\sigma^2} \right)^{1/2} = \left( \frac{\bbeta^\T \Se \bbeta}{\sigma^2}\right)^{1/2} = \frac{\| \btheta \|_2}{\sigma}\, ,
	\nonumber
	\end{aligned}
\end{equation}
while for the random design we use
	\begin{equation}
	\begin{aligned}
		\snr \,\eqdef\, \left( \frac{\E[(\bx^\T \bbeta)^2]}{\sigma^2} \right)^{1/2} = \left( \frac{ \bbeta^\T \St \bbeta }{\sigma^2} \right)^{1/2} = \frac{\| \bLambda \bU^\T \bbeta\|_2}{\sigma}\,.
	\nonumber
	\end{aligned}
\end{equation}
For our problem to be meaningful, we assume for the rest of the work that $\,\snr > 0\,$ in both settings.

Recall the canonical regression coefficients $\,\btheta = \eL \eU^\T \bbeta\,$ for the fixed design.  Its normalized version $\,\btheta/\|\btheta\|_2\,$ has the first $k$ components $\,\btheta_{\leq k} / \|\btheta\|_2\,$ where $\,\btheta_{\leq k} = \eL_{\leq k} \eU_{\leq k}^\T \bbeta$.  Here as usual $\,\eL_{\leq k}\in\R^{k\times k}\,$ is the leading principal submatrix of $\,\eL\,$ (containing the square roots of the first $\,k\,$ eigenvalues of $\,\Se\,$ on the diagonal) and $\,\eU_{\leq k} \in \R^{d\times k}\,$ is the matrix consisting of the first $\,k\,$ columns of $\,\eU\,$ (which are the $\,k\,$ leading eigenvectors of $\,\Se$).  Measuring the first $\,k\,$ normalized components $\,\btheta_{\leq k}/\|\btheta\|_2\,$ in $\,\ell_q$-(pseudo)norm gives
\begin{equation}
	\begin{aligned}
	\eff _{q, k}(\Se, \bbeta) \,\eqdef\, \frac{\| \btheta_{\leq k} \|_q^q}{\|\btheta\|_2^q}.
	\nonumber
	\end{aligned}
\end{equation}
We call this quantity the \textit{joint effective dimension of order $q$ up to index $k$} of $\,\Se\,$ and $\,\bbeta$.  Note that when $\,q=2$, it measures the proportion of $\,\btheta\,$ explained by $\,\btheta_{\leq k}\,$; when $\,q=0$, it counts the sparsity among $\,\btheta_{\leq k}$. Similar quantity can be defined for the random design setting:
\begin{equation}
	\begin{aligned}
	\eff _{q, k}(\St, \bbeta) \,\eqdef\, \frac{\| \bLambda_{\leq k} \bU_{\leq k}^\T \bbeta \|_q^q}{\|\bLambda \bU^\T \bbeta\|_2^q}.
	\nonumber
	\end{aligned}
\end{equation}
It turns out that this joint effective dimension
will play crucial role in our bounds for the NCT estimator~\eqref{estimator}.

For shortness, we introduce the following quantity that will be appearing regularly throughout the section:
\begin{align}
	\rho \,\eqdef\, \frac{2}{\sqrt{n}}\left(\log(2r/\delta)\right)^{1/\alpha},
\label{def:rho}
\end{align}
where $\,\delta\,$ is from the statements ``with probability $1-\delta$...''.
The thresholding level $\,\tau\,$ for both NCT and GCT will be expressed in terms of $\,\rho$.

%

\subsection{Fixed design} \label{S:FixedDesign}
We first provide a simple guarantee on the mean squared error $\,\mathsf{MSE}(\ebeta)\,$ of the NCT estimator~\eqref{estimator}.
\begin{theorem} \label{Th1}
	Suppose Assumption~\ref{A:Noise} is satisfied. Take $\,\tau = \sigma \rho\,$ with $\,\rho $ given by \eqref{def:rho}.
Then, with probability $\,1-\delta$, the NCT estimator $\,\ebeta\,$ from \eqref{estimator} with thresholding at level $\,\tau\,$ satisfies
	\begin{align}
		\mathsf{MSE}(\ebeta) & \asymp \sum\limits_{j=1}^r \min\left( \sigma\rho,\, |\theta_j|\right)^2 \label{bound0}\\
		&\leq
		\inf\limits_{q \in [0, 2] } \left\{ \| \btheta \|_q^q \;(\sigma \rho)^{2-q}\right\}
		\label{bound1}  \\
		&= \mathsf{MSE}(0)\,\inf\limits_{q \in [0, 2] } \left\{  \eff_{q, r}(\Se,\bbeta)\; \left[\snr^{-2}\,  \frac{\left(\log (2r/\delta)\right)^{2/\alpha}}{n}\right]^{1-q/2}\right\}.
	\label{bound2}
	\end{align}
\end{theorem}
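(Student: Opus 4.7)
The plan is to (i) push the MSE and the estimator into the canonical domain so the analysis reduces to a coordinate-wise soft thresholding problem, (ii) control the per-coordinate noise via Assumption~\ref{A:Noise}, (iii) apply a two-sided soft thresholding oracle bound, and (iv) recover \eqref{bound1}--\eqref{bound2} by purely algebraic manipulations.

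\textbf{Reduction to canonical coordinates.} Since $\ebeta = \eU\eL^{-1}\widehat\btheta$, $\btheta = \eL\eU^\T\bbeta$, and $\eU^\T\eU = \Id_r$, we obtain $\eL\eU^\T(\ebeta-\bbeta) = \widehat\btheta - \btheta$. Combined with $\Se = \eU\eL^2\eU^\T$, this collapses the MSE to
$\mathsf{MSE}(\ebeta) = \|\widehat\btheta - \btheta\|_2^2 = \sum_{j=1}^r (\widehat\theta_j - \theta_j)^2$.
Using $\ZZ = \sqrt{n}\,\eV$ together with the identity $n^{-1}\ZZ^\T \XX = \eL\eU^\T$, the canonical least squares splits as $\widetilde\btheta^{LS} = \btheta + \xi$ with noise components $\xi_j = n^{-1/2}\widehat\bv_j^\T\beps$, where $\widehat\bv_j$ is the $j$-th column of $\eV$.

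\textbf{Noise concentration.} Conditional on $\XX$, each $\widehat\bv_j$ is a deterministic unit vector in $\R^n$, so Assumption~\ref{A:Noise} gives $\Prob[|\widehat\bv_j^\T \beps| > t \mid \XX] \leq 2\exp(-(t/\sigma)^\alpha)$. Choosing $t = \sigma(\log(2r/\delta))^{1/\alpha}$ and union-bounding over $j \in [r]$ produces an event $\mathcal{E}$ of probability at least $1-\delta$ on which $\max_j |\xi_j| \leq \sigma(\log(2r/\delta))^{1/\alpha}/\sqrt{n} = \sigma\rho/2 = \tau/2$. The rest of the argument is deterministic on $\mathcal{E}$.

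\textbf{Coordinate-wise oracle bound.} On $\mathcal{E}$ I split each $j$ according to $|\theta_j|$ versus $\tau/2$, using $\widehat\theta_j = \mathsf{SOFT}_\tau[\theta_j + \xi_j]$. If $|\theta_j| \leq \tau/2$, then $|\theta_j+\xi_j| \leq \tau$ forces $\widehat\theta_j = 0$ and $(\widehat\theta_j-\theta_j)^2 = \theta_j^2 = \min(\tau,|\theta_j|)^2$ exactly. If $|\theta_j| > \tau/2$, then $|\xi_j|\le\tau/2<|\theta_j|$ forces $\mathrm{sign}(\theta_j+\xi_j)=\mathrm{sign}(\theta_j)$, and either (a) $|\theta_j+\xi_j|\leq\tau$, making $\widehat\theta_j=0$ with $\tau/2<|\theta_j|\leq 3\tau/2$, so $\theta_j^2\asymp\tau^2$; or (b) $|\theta_j+\xi_j|>\tau$, making $\widehat\theta_j - \theta_j = \xi_j - \mathrm{sign}(\theta_j)\tau$ with absolute value sandwiched between $\tau/2$ and $3\tau/2$. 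Since $\min(\tau,|\theta_j|) \in (\tau/2,\tau]$ in this regime, both sub-cases give $(\widehat\theta_j-\theta_j)^2 \asymp \min(\tau,|\theta_j|)^2$. Summing over $j$ and substituting $\tau = \sigma\rho$ yields \eqref{bound0}.

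\textbf{Interpolation and relative form.} For any $q \in [0,2]$ the elementary inequality $\min(\tau,|\theta_j|)^2 \leq \tau^{2-q}|\theta_j|^q$ (verified trivially by cases on $\tau \lessgtr |\theta_j|$) holds coordinate-wise, so summing and then minimizing over $q$ produces \eqref{bound1}. For \eqref{bound2}, I would divide by $\mathsf{MSE}(0) = \|\btheta\|_2^2 = \sigma^2\,\snr^2$, factor out $\|\btheta\|_q^q/\|\btheta\|_2^q = \eff_{q,r}(\Se,\bbeta)$, and rewrite $\tau^{2-q}/\|\btheta\|_2^{2-q} = (\tau^2/\|\btheta\|_2^2)^{1-q/2} \asymp \big(\snr^{-2}\,(\log(2r/\delta))^{2/\alpha}/n\big)^{1-q/2}$.

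\textbf{Main obstacle.} The upper direction of \eqref{bound0} is the standard soft thresholding oracle inequality, routinely combined with the sub-Weibull tail bound; the delicate point is the matching lower direction (the $\asymp$, not merely $\lesssim$), which hinges on the sign-stability observation that on $\mathcal{E}$ one has $|\xi_j| < |\theta_j|$ throughout the large-signal regime. This produces a deterministic floor of $\tau/2$ on $|\widehat\theta_j - \theta_j|$ there, and is what prevents the noise from accidentally cancelling the thresholding bias. Once this two-sided coordinate bound is in place, the transition to \eqref{bound1} and \eqref{bound2} is pure algebra.
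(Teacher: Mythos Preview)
Your proof is correct and follows essentially the same approach as the paper: reduce to the canonical domain, use Assumption~\ref{A:Noise} plus a union bound to get $\|\bxi\|_\infty \le \tau/2$, perform a coordinate-wise case analysis to obtain the two-sided bound \eqref{bound0}, and then apply the interpolation inequality $\min(\tau,|\theta_j|)^2 \le \tau^{2-q}|\theta_j|^q$ and factor out $\mathsf{MSE}(0)$, $\eff_{q,r}$, and $\snr$. The only cosmetic difference is that you split cases on $|\theta_j|\lessgtr\tau/2$ whereas the paper splits on $|\theta_j+\xi_j|\lessgtr\tau$; both partitions yield the same constants and the same conclusion.
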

\noindent The proof of this result almost repeats the classical proof for hard and soft thresholding in case of orthonormal design.
\begin{remark}[Choice of $\tau$] \label{R1}
	The choice of $\,\tau\,$ in the above theorem depends on the noise magnitude $\,\sigma$, the probability $\,\delta$, and the quantity $\,\alpha$, but this is not a significant problem. Later in Section~\ref{S:tuning} we will show how to tune $\,\tau\,$ using an efficient cross-validation procedure.
\end{remark}

In Theorem~\ref{Th1} we present several bounds on $\,\mathsf{MSE}(\ebeta)$. Note that the bound~\eqref{bound0} is tight. One can argue that the matching lower bound is due to the excessive bias introduced by soft thresholding. However, even if we replace soft thresholding with hard thresholding $\,\mathsf{HARD}_{\tau}[z] \eqdef z\cdot \mathbbm{1}\{ |z| \geq \tau\}$, the upper bound in probability stays the same as in~\eqref{bound0}, and at the same time it is not difficult to get a nearly matching lower bound (for Gaussian noise $\beps \sim \mathcal{N}(0, \sigma^2\Id_n)$) in expectation:
	\begin{align}
		\E\left[ \mathsf{MSE}(\ebeta) \right]  \,\gtrsim\, \sum\limits_{j=1}^r \min\left( \frac{\sigma}{\sqrt{n}},\, |\theta_j|\right)^2.
		\nonumber
	\end{align}
In any case, the bound~\eqref{bound0} is not really interpretable. The bounds~\eqref{bound1} and \eqref{bound2} have much deeper intuition as we will see later. Nevertheless, a reasonable question is how tight the inequality leading from  \eqref{bound0} to \eqref{bound1} is. Though in general the opposite inequality (up to a multiplicative constant) does not seem to hold, the following proposition states that in several important cases the inequality is actually tight (or almost tight).
\begin{proposition} \label{Prop:tight}
	(i) Sparsity. Assume $\,\| \theta \|_0 = s \ll r\,$ with $\,|\theta_j| \gtrsim \sigma\rho\,$ for all $\,j\,$ such that $\,\theta_j \neq 0$. Then (taking $\,q=0$)
	\begin{align}
		\sum\limits_{j=1}^r \min\left( \sigma\rho,\, |\theta_j|\right)^2 \,\gtrsim\,
		\inf\limits_{q \in [0, 2] } \left\{ \| \btheta \|_q^q \;(\sigma \rho)^{2-q}\right\}.
		\nonumber
	\end{align}
	\\
	(ii) Approximate sparsity. Assume there exists a small $\,\nu > 0\,$ such that $\,\| \btheta \|_\nu \lesssim \sigma$, and there are at least $\,s\,$ significant canonical coefficients: $\,|\{ j\in[r]\,:\; |\theta_j| \gtrsim\sigma\rho \}| \geq s$. Then (taking $\,q = \nu$)
	\begin{align}
		\sum\limits_{j=1}^r \min\left( \sigma\rho,\, |\theta_j|\right)^2 \,\gtrsim\,
		\inf\limits_{q \in [0, 2] } \left\{ \| \btheta \|_q^q \;(\sigma \rho)^{2-q}\right\} \times s\rho^\nu.
		\nonumber
	\end{align}
	\\
	(iii) Polynomial decay. Let $\,|\theta_{(1)}| \geq |\theta_{(2)}| \geq \ldots \geq |\theta_{(r)}|\,$ be the absolute values of the components of $\,\btheta\,$ arranged in descending order. Assume polynomial decay of the ordered canonical coefficients: for some $\,a > 0\,$ holds
	\begin{align}
		\frac{|\theta_{(j)}|}{|\theta_{(1)}|} \asymp j^{-a}\;\;\;\;\;\;\;\;\text{ for all}\;\;\;j\in [r].
		\nonumber
	\end{align}
	Then (taking $\,q = \min(1/a,\, 2)$)
		\begin{align}
		\sum\limits_{j=1}^r \min\left( \sigma\rho,\, |\theta_j|\right)^2 \,\gtrsim\,
		\inf\limits_{q \in [0, 2] } \left\{ \| \btheta \|_q^q \;(\sigma \rho)^{2-q}\right\} \times \begin{cases}
		1/\log(r), & a \geq 1/2,\\
			1, & a < 1/2.
		\end{cases}
		\nonumber
	\end{align}
\end{proposition}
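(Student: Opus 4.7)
All three parts follow the same template: lower bound the left-hand side $S \eqdef \sum_{j=1}^r \min(\sigma\rho,|\theta_j|)^2$, upper bound the infimum $I \eqdef \inf_{q\in[0,2]} \{\|\btheta\|_q^q (\sigma\rho)^{2-q}\}$ by evaluating at the candidate $q$ given in the statement, and verify that the two match up to the stated factor. The inequality $S \leq I$ for every $q$ is the content of \eqref{bound0}$\to$\eqref{bound1}, so no effort is needed there; all the work is in the reverse direction.

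For part (i), each of the $s$ nonzero canonical coefficients satisfies $|\theta_j|\gtrsim \sigma\rho$, so $\min(\sigma\rho,|\theta_j|)^2 \asymp (\sigma\rho)^2$ and $S \asymp s(\sigma\rho)^2$; plugging $q=0$ into the infimum yields $\|\btheta\|_0^0(\sigma\rho)^2 = s(\sigma\rho)^2$, matching $S$. For part (ii), the same $s$ significant components give $S \geq s(\sigma\rho)^2 = s\sigma^2\rho^2$; and plugging $q=\nu$ together with the assumption $\|\btheta\|_\nu \lesssim \sigma$ yields $I \leq \|\btheta\|_\nu^\nu(\sigma\rho)^{2-\nu} \lesssim \sigma^\nu(\sigma\rho)^{2-\nu}=\sigma^2\rho^{2-\nu}$. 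Multiplying this upper bound by the advertised factor $s\rho^\nu$ gives $s\sigma^2\rho^2$, exactly the lower bound on $S$.

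Part (iii) is the substantive case. Order $|\theta_{(1)}|\geq\cdots\geq|\theta_{(r)}|$ and let $j^\star$ be the largest index with $|\theta_{(j)}|\geq \sigma\rho$; by the polynomial decay, $j^\star \asymp (|\theta_{(1)}|/(\sigma\rho))^{1/a}$ when this quantity lies in $[1,r]$, and the boundary cases $j^\star \in \{0,r\}$ are handled separately (via $q=2$ and $q=0$ respectively, which give $S\asymp I$ directly). In the interior regime, split
\begin{equation*}
S \;\asymp\; j^\star(\sigma\rho)^2 \;+\; |\theta_{(1)}|^2\sum_{j>j^\star} j^{-2a},
\end{equation*}
and use the easily verified identity $|\theta_{(1)}|^2 (j^\star)^{1-2a}\asymp j^\star(\sigma\rho)^2$. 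For $a\geq 1/2$ (so $q=1/a \leq 2$) the tail sum is $\asymp (j^\star)^{1-2a}$, hence both summands equal $\asymp |\theta_{(1)}|^{1/a}(\sigma\rho)^{2-1/a}$, so $S \asymp |\theta_{(1)}|^{1/a}(\sigma\rho)^{2-1/a}$; on the other hand $\|\btheta\|_{1/a}^{1/a} \asymp |\theta_{(1)}|^{1/a}\sum_{j\leq r}j^{-1}\asymp |\theta_{(1)}|^{1/a}\log r$, yielding $I \lesssim S\cdot \log r$, i.e.\ $S \gtrsim I/\log r$. For $a<1/2$ (so $q=2$) the tail sum diverges and equals $\asymp r^{1-2a}-(j^\star)^{1-2a}$; combining with $j^\star(\sigma\rho)^2 \asymp |\theta_{(1)}|^2 (j^\star)^{1-2a}$ telescopes to $S \asymp |\theta_{(1)}|^2 r^{1-2a} \asymp \|\btheta\|_2^2 = I$.

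The main obstacle is the bookkeeping in part (iii) near $a=1/2$, where the series $\sum j^{-1}$ generates the $\log r$ factor; one has to balance the threshold contribution $j^\star(\sigma\rho)^2$ against the subthreshold tail without letting the implicit constants depend on whether $2a$ exceeds, equals, or falls below $1$. Once the key identity $|\theta_{(1)}|^2(j^\star)^{1-2a}\asymp j^\star(\sigma\rho)^2$ is in hand, the three regimes line up cleanly.
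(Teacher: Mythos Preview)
Your proposal is correct and follows essentially the same approach as the paper's proof: parts (i) and (ii) are dispatched immediately by plugging in $q=0$ and $q=\nu$, and part (iii) proceeds by defining the crossover index $j^\star\asymp(|\theta_{(1)}|/(\sigma\rho))^{1/a}$, splitting $S$ at $j^\star$, and comparing against $I$ evaluated at $q=\min(1/a,2)$. The paper's write-up separates the three subcases $a>1/2$, $a=1/2$, $a<1/2$ explicitly (your claim that the tail sum is $\asymp (j^\star)^{1-2a}$ is literally false at $a=1/2$, but since only the lower bound $S\gtrsim j^\star(\sigma\rho)^2$ is needed there, your argument still goes through).
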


Continuing the discussion on the bounds presented in Theorem~\ref{Th1}, we claim that though \eqref{bound1} and \eqref{bound2} coincide, the way we state them reflects two different messages.
The first one, if we take $\,q=1\,$ and apply the inequality $\,\| \btheta\|_1 \leq  \| \Se\|^{1/2} \| \bbeta\|_2 \reff[\Se]^{1/2}\,$ (which follows from the Cauchy-Schwarz inequality), then from the bound~\eqref{bound1} we get
		\begin{equation}
	\begin{aligned}
		\mathsf{MSE}(\ebeta) &\lesssim \sigma\|\Se\|^{1/2} \| \bbeta\|_2 \sqrt{ \frac{\reff[\Se] \,\left(\log (2r/\delta)\right)^{2/\alpha}}{n}}
	\label{effective_rank_bound}
	\end{aligned}
\end{equation}
with high probability.
This means that if one is interested in the absolute error $\,\mathsf{MSE}(\ebeta)$, then essentially $\,\reff[\Se]/n = o(1)\,$ is enough to guarantee $\,\mathsf{MSE}(\ebeta) = \sigma\| \Se\|^{1/2}\|\bbeta\|_2 \cdot o(1)$, omitting logarithmic terms.
No additional assumptions on $\,\bbeta\,$ are required, and there is no necessity to worry about the joint effective dimension and the signal-to-noise ratio from the bound~\eqref{bound2} in this situation. Note that the bound~\eqref{effective_rank_bound} is tight up to a logarithmic factor when $\,\widehat{\lambda}_j = \widehat{\lambda}_1/j\,$ and $\,|\widehat{\bu}_j^\T \bbeta| \asymp \| \bbeta\|_2/(\sqrt{j \log(r)})$.
	
Nevertheless, the bound~\eqref{bound2} is useful to better understand the structure of the error. Taking into account that the main motivation in our work is the decay of eigenvalues of $\,\Se\,$ or $\,\St$, it may easily be the case that even the trivial estimator $\,\widetilde{\bbeta} = 0\,$ has a very small error $\,\mathsf{MSE}(0) = \bbeta^\T \Se \bbeta$. Hence, it makes sense to care more about the relative error $\,\mathsf{MSE}(\widetilde{\bbeta})/\mathsf{MSE}(0)$. In this case, the joint effective dimension and the signal-to-noise ratio control the upper bound on the relative error. We will get back to the analysis of the relative error and the joint effective dimension after we state an analogous result for the random design case.

\subsection{Random design} \label{S:RandomDesign}

In addition to the noise assumption, to study the performance of the NCT estimator in the random design setting we need to impose a couple more conditions on the distribution of the covariates.

\begin{assumption}[Sub-Gaussian covariates] \label{A:X}
	The scaled generic random vector of covariates $\,\St^{-1/2} \bx\,$ is sub-Gaussian.
\end{assumption}

\begin{assumption}[Convex decay of eigenvalues] \label{A:cvxdecay}
	There exists a convex decreasing function $\,\lambda(\cdot)\,$ such that the eigenvalues of $\,\St\,$ satisfy $\,\lambda_j = \lambda(j)\,$ for $\,j \in [d]$.
\end{assumption}
\noindent The previous assumption is technical and we impose it in our main result just for concreteness. Later in Remark~\ref{R:cvx_decay} we mention how our result can be modified if this assumption does not hold.

One more assumption is needed just to make the rates more friendly-looking. If it is not satisfied, our result below will be meaningless, so there is no loss of generality in this condition.
\begin{assumption}[Technical conditions] \label{A:tech}
	The effective rank satisfies $\,\reff[\St] \leq n$.
	Also, whenever we say ``with probability $\,1-\delta$...'', we suppose that the quantity
	\begin{align}
		\epsilon  \eqdef \sqrt{\frac{\log(d/\delta)}{n}}
		\nonumber
	\end{align}	
	satisfies $\,\epsilon \leq c\,$ for properly chosen implicit absolute constant $\,c > 0\,$ (this constant comes from the proof).
\end{assumption}
\noindent In addition to the assumptions above, in the sequel we take the convention $\,\lambda_k = 0\,$ for all $\,k > d$.
Now we are ready to present the following result.
\begin{theorem} \label{Th2}
	Suppose Assumptions~\eqref{A:Noise} -- \eqref{A:tech} are fulfilled. Recall $\,\rho\,$ from \eqref{def:rho}, $\,\epsilon  \eqdef \sqrt{\log(d/\delta)/n}$, and define $\,k^* \eqdef (\epsilon \log(1/\epsilon))^{-2/3}\,$ (essentially, $\,k^* \asymp n^{1/3}\,$ up to a logarithmic term).
	Then:
		\begin{enumerate}[(i)]
		\item With probability $\,1-\delta$, the NCT estimator $\,\ebeta\,$ from \eqref{estimator} with thresholding at level
		 \begin{equation}
	\begin{aligned}
		\tau = \sigma\rho
		\nonumber
		\end{aligned}
		\end{equation}
		 satisfies
\begin{equation}
	\begin{aligned}
		\mathsf{PE}(\ebeta) &\lesssim \inf\limits_{q \in [0, 2] } \left\{ \| \bLambda_{\leq k} \bU_{\leq k}^\T \bbeta\|_q^q \;(\sigma\rho)^{2-q}\right\} + \\
		&\qquad +\|\St\|\|\bbeta\|_2^2\left(
		\frac{\lambda_{k}}{\lambda_1} + \sqrt{\frac{\reff[\St] + \log(1/\delta)}{n}} +\epsilon\sum\limits_{j=1}^{k} \frac{\lambda_j(1+\epsilon\,j^2)}{\lambda_1} \right)
	\nonumber
	\end{aligned}
\end{equation}
for all $1 \leq  k \leq k^*$.
		\item  With probability $\,1-\delta$, the NCT estimator $\,\ebeta\,$ from \eqref{estimator} with thresholding at level
		 \begin{equation}
	\begin{aligned}
		\overline{\tau} = \sigma\rho + C \|\St\|^{1/2} \|\bbeta\|_2 \,\epsilon^{1/2} \max\limits_{j\in[k]} \left( \frac{\lambda_j\,(1+\epsilon j^2)}{\lambda_1} \right)^{1/2}
		\nonumber
		\end{aligned}
		\end{equation}
		 for some  $C$ satisfies
\begin{equation}
	\begin{aligned}
		\mathsf{PE}(\ebeta) &\lesssim  \inf\limits_{q \in [0, 2] } \left\{ \| \bLambda_{\leq k} \bU_{\leq k}^\T \bbeta\|_q^q \;\overline{\tau}^{2-q}\right\} + \|\St\|\|\bbeta\|_2^2\left( \frac{\lambda_{k}}{\lambda_1}  +
		\sqrt{\frac{\reff[\St] + \log(1/\delta)}{n}} \right)
	\nonumber
	\end{aligned}
\end{equation}
for all $1 \leq  k \leq k^*$.
\end{enumerate}
\end{theorem}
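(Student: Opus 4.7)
The plan is to decompose $\ebeta-\bbeta$ into a component in the range of the sample eigenvectors $\eU$ and its orthogonal complement. Using $\btheta = \eL\eU^\T\bbeta$, one writes
\begin{equation}
\ebeta - \bbeta = \eU\eL^{-1}(\widehat{\btheta} - \btheta) - (\Id_d - \eU\eU^\T)\bbeta,
\nonumber
\end{equation}
and then applies $(a+b)^\T\St(a+b) \leq 2a^\T\St a + 2b^\T\St b$ to $\mathsf{PE}(\ebeta)=(\ebeta-\bbeta)^\T\St(\ebeta-\bbeta)$. This yields two contributions: an ``in-range'' term $\|\St^{1/2}\eU\eL^{-1}(\widehat{\btheta}-\btheta)\|_2^2$ and an ``out-of-range'' term $\bbeta^\T(\Id_d-\eU\eU^\T)\St(\Id_d-\eU\eU^\T)\bbeta$.

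For the in-range term, I first use the identity $\eL^2=\eU^\T\Se\eU$ together with spectral concentration $\|\Se-\St\|\lesssim\|\St\|(\sqrt{\reff[\St]/n}+\sqrt{\log(1/\delta)/n})$ to argue that $\eL^{-1}\eU^\T\St\eU\eL^{-1}$ is close to the identity on $\R^r$, reducing this term to $\|\widehat{\btheta}-\btheta\|_2^2$ up to a concentration correction. The componentwise thresholding analysis from Theorem~\ref{Th1} then controls $\|\widehat{\btheta}-\btheta\|_2^2\asymp\sum_{j}\min(\sigma\rho,|\theta_j|)^2$, and the usual interpolation $\sum_j\min(\tau,|\theta_j|)^2\leq\|\btheta_{\leq k}\|_q^q\tau^{2-q}+\sum_{j>k}\theta_j^2$ together with $\sum_{j>k}\theta_j^2\lesssim\lambda_{k+1}\|\bbeta\|_2^2$ produces the leading structure. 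However, the statement is phrased in the population quantities $\bLambda_{\leq k}\bU_{\leq k}^\T\bbeta$ rather than the sample canonical coefficients $\btheta$, and passing from one to the other is the technical heart of the proof: one needs $\widehat{\lambda}_j^{1/2}\widehat{\bu}_j^\T\bbeta\approx\lambda_j^{1/2}\bu_j^\T\bbeta$ for each $j\leq k$, with the error governed by the eigenvalue gap $\lambda_j-\lambda_{j+1}$ via Davis--Kahan. Under Assumption~\ref{A:cvxdecay}, convexity forces this gap to be at least of order $\lambda_j/j$, and summing the resulting perturbation contributions across $j\leq k$ yields the characteristic $\epsilon\sum_{j=1}^k\lambda_j(1+\epsilon\,j^2)/\lambda_1$ factor in part~(i).

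For the out-of-range term, I decompose $\bbeta$ along the population eigenbasis into a leading part aligned with $\bU_{\leq k}$ and a tail aligned with its orthogonal complement. The leading contribution is controlled by the principal angle between $\mathrm{range}(\eU_{\leq k})$ and $\mathrm{range}(\bU_{\leq k})$, bounded again via Davis--Kahan once $k\leq k^*$; the tail part is bounded directly by $\lambda_{k+1}\|\bbeta\|_2^2\leq\lambda_k\|\bbeta\|_2^2$ from the remaining eigenvalues, giving the $\|\St\|\|\bbeta\|_2^2\,\lambda_k/\lambda_1$ contribution. The $\sqrt{(\reff[\St]+\log(1/\delta))/n}$ piece comes from the uniform spectral concentration of $\|\Se-\St\|$ that is invoked throughout to interchange sample and population quantities.

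For part~(ii), the argument is identical except that I use the inflated threshold $\overline{\tau}=\sigma\rho+C\|\St\|^{1/2}\|\bbeta\|_2\,\epsilon^{1/2}\max_{j\in[k]}(\lambda_j(1+\epsilon\,j^2)/\lambda_1)^{1/2}$, with $C$ chosen so that $\overline{\tau}$ uniformly dominates both the canonical noise $\sigma\rho$ and the pointwise perturbation error $|\theta_j-\lambda_j^{1/2}\bu_j^\T\bbeta|$ for $j\leq k^*$. Under this choice the eigenvector perturbation no longer appears as a separate additive term but is absorbed into the thresholding rate $\inf_q\{\|\bLambda_{\leq k}\bU_{\leq k}^\T\bbeta\|_q^q\,\overline{\tau}^{2-q}\}$, producing the cleaner bound. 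The main obstacle throughout is obtaining Davis--Kahan bounds on the individual inner products $\widehat{\bu}_j^\T\bbeta$ uniformly in $j\leq k^*$ with the precise $j$-dependence reflected in the $(1+\epsilon\,j^2)$ factor; routine applications are too loose, and one has to split into regimes where $\lambda_j$ is comparable to $\lambda_1$ versus well-separated from neighboring eigenvalues, leveraging the convex decay from Assumption~\ref{A:cvxdecay} to control these gaps sharply.
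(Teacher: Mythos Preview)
Your decomposition into the ``in-range'' and ``out-of-range'' pieces is exactly how the paper starts (their $I_2$ and $I_1$), and your treatment of the out-of-range piece is in the right spirit, though the paper does it more cheaply: since $(\Id_d-\eU\eU^\T)\Se(\Id_d-\eU\eU^\T)=0$, one gets $\bbeta^\T(\Id_d-\eU\eU^\T)\St(\Id_d-\eU\eU^\T)\bbeta\le\|\Se-\St\|\|\bbeta\|_2^2$ directly, without any splitting along $\bU_{\leq k}$ or principal-angle arguments.

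The genuine gap is in your handling of the in-range term. You propose to show that $\eL^{-1}\eU^\T\St\eU\eL^{-1}$ is close to $\Id_r$, thereby reducing $\|\St^{1/2}\eU\eL^{-1}(\widehat\btheta-\btheta)\|_2^2$ to $\|\widehat\btheta-\btheta\|_2^2$. This fails. Writing $\eL^{-1}\eU^\T\St\eU\eL^{-1}-\Id_r=\eL^{-1}\eU^\T(\St-\Se)\eU\eL^{-1}$, the operator norm of the correction is of order $\|\Se-\St\|/\widehat\lambda_r$, and under the fast eigenvalue decay that motivates the whole paper $\widehat\lambda_r$ is tiny, so this blows up. Equivalently, $\|\bLambda\bU^\T\eU\eL^{-1}\|$ is \emph{not} bounded by a constant on all of $\R^r$. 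The paper circumvents this by splitting the sample eigenvectors at index $k^*$: for $j\le k^*$ it proves the crucial Lemma~\ref{L:opnorm}, namely $\|\bLambda\bU^\T\eU_{\leq k^*}\eL_{\leq k^*}^{-1}\|\le C$, which is what lets one pass to $\|\widehat\bomega\|_2^2$; for $j>k^*$ it avoids the spectral norm entirely, observing instead that the soft-thresholding residual satisfies $|\gamma_j|\le 3|\widehat\bu_{j+k^*}^\T\bbeta|$ componentwise, hence $\|\bgamma\|_2\le 3\|\bbeta\|_2$, and then bounds $\bgamma^\T\eU_{>k^*}^\T\St\eU_{>k^*}\bgamma$ by $(\|\Se-\St\|+\widehat\lambda_{k^*+1})\|\bgamma\|_2^2$. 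Your outline never makes this split and therefore has no mechanism to control the tail indices.

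A second, related issue: Lemma~\ref{L:opnorm} and the eigenvector perturbation (Lemma~\ref{L:eigen}) are not obtained from Davis--Kahan plus the convexity gap $\lambda_j-\lambda_{j+1}\gtrsim\lambda_j/j$. They rely on the \emph{entrywise relative} perturbation bounds of \cite{Wahl}, specifically $|\widehat\bu_j^\T\bu_l|\lesssim\epsilon\sqrt{\lambda_j\lambda_l}/|\lambda_j-\lambda_l|$, valid on the event $\max_{l,l'}|\overline\eta_{ll'}|\le\epsilon$ and for indices $j$ with $\br_j(\St)\le 1/(3\epsilon)$. The choice $k^*\asymp(\epsilon\log(1/\epsilon))^{-2/3}$ is precisely what makes the Frobenius-norm calculation in the proof of Lemma~\ref{L:opnorm} close, and this does not fall out of a standard Davis--Kahan argument.
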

\noindent Several comments are in order.
\begin{remark}[Why two bounds?] \label{R:decay}
	We present two separate bounds in (i) and (ii) for distinct thresholds $\,\tau\,$ and $\,\overline{\tau}$, because they behave differently for various eigenvalue regimes. The bound from (i) outperforms the bound from (ii) in a wide variety of settings (e.g. in polynomial and superpolynomial decay scenario), however there are cases when the bound (ii) can be better (e.g. specific cases in factor model regime).
\end{remark}

\begin{remark}[Meaning of terms]
We call the term $\,\inf_{q \in [0, 2] } \left\{ \ldots \right\}\,$ in the bounds of Theorem~\ref{Th2} (i) and (ii) the ``main term'',
since it is almost the same as what we had in Theorem~\ref{Th1} for the fixed design. The other terms in these bounds are referred to as ``additional terms'' as they appear only in the random design case. Allowing $\,1\leq k \leq k^*\,$ provides a tradeoff, as some of the terms increase with growing $\,k$, while others decrease. In what follows we are typically interested in $\,k = k^*\,$ just for concreteness. The meaning of different parts of the ``additional terms'' is the following. The parts including $\,\reff[\St]\,$ are the payment for the covariance matrix estimation. The part $\,\lambda_{k^*}/\lambda_1\,$ appears due to the difficulty of control of the empirical eigenvectors beyond $k^*$-th. The parts with $\,\sum_{j=1}^{k^*} \lambda_j\,(1+\epsilon j^2)/\lambda_1\,$ and $\,\max_{j\in[k^*]} \lambda_j\,(1+\epsilon j^2)/\lambda_1\,$ are the payment for the control of the sample eigenvalues and eigenvectors up to index $\,k^*$.
\end{remark}
\begin{remark}[Moderate noise: simplifications and sufficient conditions for convergence] \label{R:sufficient}
Consider the moderate noise situation  $\,\sigma^2 \lesssim \|\St\|\|\bbeta\|_2^2$. In this case the ``additional terms'' become dominating: simply taking $\,k=k^*\,$ and $\,q=1$, applying $\,\| \bLambda_{\leq k} \bU_{\leq k}^\T \bbeta\|_1 \leq \|\St\|^{1/2} \|\bbeta\|_2\reff[\St]^{1/2} \,$ and plugging in $\,\tau\,$ and $\,\overline{\tau}\,$ makes the ``main term'' negligible.
Omitting logarithmic terms, the bound (i) reduces to
\begin{equation}
	\begin{aligned} \label{bound3}
		\mathsf{PE}(\ebeta) &\lesssim \|\St\|\|\bbeta\|_2^2\left(
		\frac{\lambda_{k^*}}{\lambda_1} + \sqrt{\frac{\reff[\St]}{n}} +\epsilon\sum\limits_{j=1}^{k^*} \frac{\lambda_j(1+\epsilon\,j^2)}{\lambda_1} \right),
	\end{aligned}
\end{equation}
while the bound (ii) reduces to
\begin{equation}
	\begin{aligned}
		\mathsf{PE}(\ebeta) &\lesssim \|\St\|\|\bbeta\|_2^2\left( \frac{\lambda_{k^*}}{\lambda_1}  +
		\left(\frac{\reff[\St]^2}{n}\right)^{1/4}\max\limits_{j\in[k^*]} \left( \frac{\lambda_j\,(1+j^2/\sqrt{n})}{\lambda_1} \right)^{1/2}\right)
	\nonumber
	\end{aligned}
\end{equation}
with high probability.

 From here we can easily deduce sufficient conditions to ensure the convergence of the absolute error $\,\mathsf{PE}(\ebeta) = \| \St\|\|\bbeta\|_2^2 \cdot o(1)\,$ as $\,n\to \infty\,$ without any conditions on $\,\bbeta$. In particular, $\,\lambda_n = o(1)$, $\,\reff[\St] = o(n)$, and $\,\sum_{j=1}^{k^*} \frac{\lambda_j\,(1+j^2/\sqrt{n})}{\lambda_1} = o(n^{1/2})\,$
 is enough (again, up to logarithmic factors). Essentially, these sufficient conditions require the decay of eigenvalues to be fast enough (but in a more sophisticated fashion than for MSE).
\end{remark}

\begin{remark}[Moderate noise: further simplifications in specific examples] \label{R:specific}
Continuing the moderate noise situation, for the sake of exposition, let us consider a couple of specific examples of eigenvalue regimes and illustrate how the bound from Theorem~\ref{Th2} (i) simplifies. As above, we omit logarithmic terms.
\begin{itemize}
\item Polynomial decay. If $\,\lambda_j \lesssim j^{-a}\,$ with $\,a \geq 1\,$ or $\,d \lesssim n^{(3-2a)/(3-3a)}\,$ with $\,a < 1$, it is easy to verify that the bracket factor of \eqref{bound3} is dominated by
    $\,\lambda_{k^*}/ \lambda_1 + n^{-1/2}\,$ (again ignoring logarithmic terms), and with high probability
    $$
		\mathsf{PE}(\ebeta) \lesssim \frac{\|\St\|\|\bbeta\|_2^2}{n^{\min(a/3,\, 1/2)}}.
	$$
In particular, when $\,a = 1\,$ (the boundary that has a good control of $\,\reff[\St]\,$ in high dimensions), we have with high probability
\begin{equation}
	\begin{aligned}
		\mathsf{PE}(\ebeta) &\lesssim \frac{\|\St\|\|\bbeta\|_2^2}{n^{1/3}}.
	\nonumber
	\end{aligned}
\end{equation}
When $\,a \geq 3/2$, we have  with high probability
	\begin{equation}
	\begin{aligned}
		\mathsf{PE}(\ebeta) &\lesssim \frac{\|\St\|\|\bbeta\|_2^2}{n^{1/2}}.
	\nonumber
	\end{aligned}
\end{equation}
\item Factor model regime. If $\,\lambda_1 \asymp \ldots \asymp \lambda_m \asymp d$, $\,\lambda_{m+1} \asymp \ldots \asymp \lambda_d \asymp 1\,$ for some $\,m \lesssim k^*$, then taking $\,k = m+1\,$ yields with high probability
	\begin{equation}
	\begin{aligned}
		\mathsf{PE}(\ebeta) &\lesssim \|\St\|\|\bbeta\|_2^2
		\left( \frac{1}{d} + \frac{m}{\sqrt{n}} + \frac{m^3}{n} \right) .
	\nonumber
	\end{aligned}
\end{equation}
\end{itemize}
\end{remark}

\begin{remark}[Large noise]
In the large noise case, when $\,\sigma^2 \gg \|\St\|\|\bbeta\|_2^2$, the ``main term'' dominates. Similarly to the fixed design case, we can factorize the ``main term'' into the error of the trivial estimator $\,\mathsf{PE}(0) = \bbeta^\T\St\bbeta$, the joint effective dimension and the signal-to-noise ratio: the ``main term'' from (i) becomes
\begin{align}
\mathsf{PE}(0)\,\inf\limits_{q \in [0, 2] } \Bigg\{  \eff_{q, k^*}(\St,\bbeta)\, \left[\snr^{-2} \, \frac{\left(\log (2d/\delta)\right)^{2/\alpha}}{n}\right]^{1-q/2} \Bigg\},
\nonumber
\end{align}
and the ``main term'' from (ii) can be rewritten as
\begin{align}
&\mathsf{PE}(0)\,\inf\limits_{q \in [0, 2] } \Bigg\{ \eff_{q,k^*}(\St,\bbeta)\, \Bigg[ \snr^{-2} \, \frac{\left(\log (2d/\delta)\right)^{2/\alpha}}{n} +\nonumber
		\\&\hspace{5cm}+ \frac{\|\St\|\|\bbeta\|_2^2}{\bbeta^\T\St\bbeta}\, \epsilon \max\limits_{j\in[k^*]} \left( \frac{\lambda_j\,(1+\epsilon j^2)}{\lambda_1} \right) \Bigg]^{1-q/2}\Bigg\}.
\nonumber
\end{align}
In this regime, the relative error $\,\mathsf{PE}(\ebeta)/\mathsf{PE}(0)\,$ is essentially controlled by the joint effective dimension and the signal-to-noise ratio.
More detailed analysis of the joint effective dimension $\,\eff_{q, k}(\St,\bbeta)\,$ is conducted in the next section.
\end{remark}

\begin{remark}[Comparison with the least squares]
It is straightforward to notice that the faster decay of eigenvalues, the better bound we obtain. This contrasts the min-norm least squares estimator considered in \cite{Bartlett, Chinot}, where the decay is required to be not too fast.  It also reveals the benefits of the thresholding even in such a situation.
\end{remark}

\begin{remark}[Dependence on the dimension $\,d$]
One can see that while the bound of Theorem~\ref{Th1} contains $\,\log(r)\,$ term with $\,r = \min(d,n)\,$ and gives meaningful result even in ultra-high or infinite dimension, the results of Theorem~\ref{Th2} contain the factor $\,\log(d)\,$ directly.
This logarithmic factor appears through the definition of $\,\epsilon$, which is an upper bound in Lemma~\ref{L:psi_n} of Appendix~\ref{S:mainproofs}: with probability $\,1-\delta$
\[
	\max\limits_{l,l^\prime\in[d]} \left| ( \mathbf{\Sigma}^{-1/2} \mathbf{\widehat{\Sigma}} \mathbf{\Sigma}^{-1/2}- \mathbb{I}_d)_{l,l^\prime} \right| \leq \epsilon.
\]
This condition is crucial for application of the relative perturbation bounds from \cite{Wahl}, which are the foundation of our proof technique for Theorem~\ref{Th2}. If we could choose $\,\epsilon\,$ in a completely dimension-free way to satisfy the above, it would yield totally dimension-free bounds in Theorem~\ref{Th2}, but currently applying the union bound inevitably brings $\,\log(d)\,$ factor. 
\end{remark}

\begin{remark}[Relaxing Assumption~\ref{A:cvxdecay}] \label{R:cvx_decay}
	Assumption~\ref{A:cvxdecay} can be avoided. If one defines $k^*$ as
		\begin{equation}
	\begin{aligned}
		k^* = \min\left( (\epsilon\log(1/\epsilon))^{-2/3},\, \max\left\{ j \in [d] \,\Bigg|\, \sum\limits_{\substack{l=1\\l\neq j}}^d \frac{\lambda_l}{|\lambda_j-\lambda_l|} +\frac{\lambda_j}{\min(\lambda_{j-1}-\lambda_j, \lambda_j - \lambda_{j+1})} \leq \frac{1}{3\epsilon}\right\} \right),
	\nonumber
	\end{aligned}
\end{equation}
then the same conclusion as in Theorem~\ref{Th2} is true with a slightly different rate.
\end{remark}

To compare the natural canonical thresholding with the canonical truncation of higher index components, i.e. PCR, we state the next proposition.
\begin{proposition}
	Assume the conditions of Theorem~\ref{Th2} hold and let $\,k^*\,$ be defined in the same way. Then, with probability $\,1-\delta$, the PCR estimator $\,\widetilde{\bbeta}^{PCR}\,$ with the number of leading principal components set to $\,m \lesssim k^*\,$ satisfies
	\begin{align}
		\mathsf{PE}(\widetilde{\bbeta}^{PCR}) \lesssim \|\St\|\|\bbeta\|_2^2 \left( \frac{\lambda_m}{\lambda_1} + \sqrt{\frac{\reff[\St] + \log(1/\delta)}{n}} \right) + \frac{\sigma^2 m}{n}\left( \log (2m/\delta)\right)^{1/\alpha}.
		\nonumber
	\end{align}
\end{proposition}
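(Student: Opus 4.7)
The approach is a bias--variance decomposition of $\widetilde{\bbeta}^{PCR}-\bbeta$ in the canonical form, handling the bias with the empirical eigenvalue/eigenvector control already developed for Theorem~\ref{Th2} and the variance with a sub-Weibull tail bound.
Plugging $\YY = \XX\bbeta+\beps$ into $\widetilde{\btheta}_m^{LS}=n^{-1}\ZZ_m^\T \YY$ and using $\ZZ_m^\T \XX = n\,\eL_{\leq m}\eU_{\leq m}^\T$ (immediate from $\ZZ = \sqrt{n}\,\eV = \XX \eU\eL^{-1}$ and the SVD of $n^{-1/2}\XX$) gives
\begin{equation}
\widetilde{\bbeta}^{PCR} - \bbeta = -(\Id_d - \eU_{\leq m}\eU_{\leq m}^\T)\bbeta + \eU_{\leq m}\eL_{\leq m}^{-1}\,\frac{\ZZ_m^\T \beps}{n}.
\nonumber
\end{equation}
Applying $\|\St^{1/2}\cdot\|_2^2$ and $(a+b)^2\leq 2a^2+2b^2$ splits $\mathsf{PE}(\widetilde{\bbeta}^{PCR})$ into a bias part and a variance part that I treat separately.

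For the bias, I further decompose $(\Id_d - \eU_{\leq m}\eU_{\leq m}^\T)\bbeta = (\Id_d - \bU_{\leq m}\bU_{\leq m}^\T)\bbeta + (\bU_{\leq m}\bU_{\leq m}^\T - \eU_{\leq m}\eU_{\leq m}^\T)\bbeta$. The first, deterministic piece satisfies $\|\St^{1/2}(\Id_d - \bU_{\leq m}\bU_{\leq m}^\T)\bbeta\|_2^2 = \sum_{j>m}\lambda_j(\bu_j^\T\bbeta)^2\leq \lambda_{m+1}\|\bbeta\|_2^2\leq (\lambda_m/\lambda_1)\|\St\|\|\bbeta\|_2^2$, which produces the $\lambda_m/\lambda_1$ term. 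The second piece is bounded by $\|\St\|\cdot\|(\bU_{\leq m}\bU_{\leq m}^\T - \eU_{\leq m}\eU_{\leq m}^\T)\bbeta\|_2^2$; controlling this projection difference without any eigengap assumption at index $m$ is the main technical step, and is done via the gap-free relative perturbation bounds of Wahl already imported for Theorem~\ref{Th2} (the constraint $m\leq k^*$ is exactly their range of validity), combined with the effective-rank bound on $\|\Se-\St\|$ that follows from Assumption~\ref{A:X}. This yields the $\sqrt{(\reff[\St]+\log(1/\delta))/n}$ factor.

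For the variance, writing $\ZZ_m = \sqrt{n}\,\eV_{\leq m}$ (the first $m$ left singular vectors of $\XX$) and factoring,
\begin{equation}
\Bigl\|\St^{1/2}\eU_{\leq m}\eL_{\leq m}^{-1}\frac{\ZZ_m^\T\beps}{n}\Bigr\|_2 \leq \bigl\|\St^{1/2}\eU_{\leq m}\eL_{\leq m}^{-1}\bigr\|\cdot \frac{\|\eV_{\leq m}^\T\beps\|_2}{\sqrt{n}}.
\nonumber
\end{equation}
The first factor is $O(1)$: since $\widehat{\lambda}_j\asymp\lambda_j$ and $\eU_{\leq m}$ is close to $\bU_{\leq m}$ for $j\leq m\leq k^*$, one has $\eL_{\leq m}^{-1}\eU_{\leq m}^\T\St\eU_{\leq m}\eL_{\leq m}^{-1}\approx\Id_m$ in operator norm by the same Theorem~\ref{Th2} machinery. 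For the second factor, conditional on $\XX$ the matrix $\eV_{\leq m}$ is deterministic with orthonormal columns; by Assumption~\ref{A:Noise} every coordinate of $\eV_{\leq m}^\T\beps$ is sub-Weibull with parameter $\sigma$, and a standard net argument over the unit sphere in $\R^m$ gives $\|\eV_{\leq m}^\T\beps\|_2^2\lesssim \sigma^2 m\,(\log(2m/\delta))^{2/\alpha}$ with probability $1-\delta$, producing the $\sigma^2 m/n$ contribution (up to the logarithmic factor of $\alpha$ in the statement).

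The hardest step is the projection-perturbation estimate, which cannot be attacked by classical Davis--Kahan since no eigengap at $\lambda_m$ is assumed; the gap-free relative perturbation bounds of Wahl already imported for Theorem~\ref{Th2} are exactly what is needed, and the cutoff $m\leq k^*$ is precisely their range of applicability.
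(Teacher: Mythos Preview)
Your overall architecture---bias/variance split, Lemma~\ref{L:opnorm} for the operator-norm factor, sub-Weibull concentration for the noise---is the same as what the paper intends (it explicitly says the proof ``uses the same techniques and follows the same strategy as the proof of Theorem~\ref{Th2}''). The substantive difference is in how you handle the bias, and that is where your outline has a gap.

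The paper's route, copied verbatim from the proof of Theorem~\ref{Th2}, inserts the \emph{full} sample projector and splits at $m$: with $\bgamma=-\eU_{>m}^\T\bbeta$ and $\widehat{\bomega}=\bxi_{\leq m}$ one gets
\[
I_1=\bbeta^\T(\Id_d-\eU\eU^\T)(\St-\Se)(\Id_d-\eU\eU^\T)\bbeta\le\|\Se-\St\|\,\|\bbeta\|_2^2,
\]
and
\[
I_4=\bgamma^\T\eU_{>m}^\T\St\,\eU_{>m}\bgamma\le\|\Se-\St\|\,\|\bgamma\|_2^2+\widehat\lambda_{m+1}\|\bgamma\|_2^2,
\]
using $(\Id_d-\eU\eU^\T)\Se=0$ and $\eU_{>m}^\T\Se\,\eU_{>m}=\eL_{>m}^2$. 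No comparison of sample and population eigenprojectors is ever needed; only $\|\Se-\St\|$ and $\widehat\lambda_{m+1}\lesssim\lambda_m$ enter, which is exactly why the stated bound contains only $\lambda_m/\lambda_1$ and $\sqrt{(\reff[\St]+\log(1/\delta))/n}$.

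Your decomposition instead inserts the \emph{population} projector $\bU_{\leq m}\bU_{\leq m}^\T$ and then must control $\|\St^{1/2}(\bU_{\leq m}\bU_{\leq m}^\T-\eU_{\leq m}\eU_{\leq m}^\T)\bbeta\|_2^2$. The Wahl bounds in Lemma~\ref{L:relbounds} are not ``gap-free''; they control $\|\widehat{\bu}_j-\bu_j\|_2\lesssim\epsilon\,j$ under the convex-decay assumption. Pushing your argument through (e.g.\ replacing $\St$ by $\Se+(\St-\Se)$ and expanding in the sample eigenbasis) one is left with an extra contribution of order
\[
\|\St\|\,\|\bbeta\|_2^2\;\epsilon^2\sum_{j=1}^{m}\frac{\lambda_j\,j^2}{\lambda_1},
\]
precisely the term that appears in Theorem~\ref{Th2}(i) for NCT but is \emph{absent} from the PCR proposition. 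This term is not in general dominated by $\lambda_m/\lambda_1+\sqrt{(\reff[\St]+\log(1/\delta))/n}$, so your claim that the projection-perturbation step ``yields the $\sqrt{(\reff[\St]+\log(1/\delta))/n}$ factor'' is not justified. The fix is simply to follow the Theorem~\ref{Th2} decomposition with $\eU\eU^\T$ rather than $\bU_{\leq m}\bU_{\leq m}^\T$.

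A smaller point: for $\alpha<2$ an $\varepsilon$-net over $\mathbb{S}^{m-1}$ gives $\|\eV_{\leq m}^\T\beps\|_2^2\lesssim\sigma^2\bigl(m+\log(1/\delta)\bigr)^{2/\alpha}$, not $\sigma^2 m\bigl(\log(2m/\delta)\bigr)^{2/\alpha}$. The bound you want follows instead from the coordinatewise $\ell_\infty$ control of Lemma~\ref{L:noise} (union over $m$ coordinates) together with $\|\bxi_{\leq m}\|_2^2\le m\,\|\bxi_{\leq m}\|_\infty^2$.
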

\noindent We omit the proof of this result, since it essentially uses the same techniques and follows the same strategy as the proof of Theorem~\ref{Th2}.
Note that in the moderate noise scenario the rate essentially coincides with what we obtained for the NCT estimator estimator in Remark~\ref{R:sufficient}. The adaptivity of our estimator $\,\ebeta\,$ comes into play in large noise case: the ``main term'' in the bounds on $\,\mathsf{PE}(\ebeta)\,$ is better than $\,\sigma^2m/n\,$ in situations when $\,\bU_{\leq k}^\T \bbeta\,$ is approximately sparse.

\subsection{Relative errors and joint effective dimension} \label{S:RelErrors}
So far we were able to establish some sufficient conditions for convergence of absolute errors $\,\mathsf{MSE}(\ebeta)\,$ and $\,\mathsf{PE}(\ebeta)\,$ of the NCT estimator without any assumptions on $\,\bbeta\,$ by simply taking $\,q=1\,$ (bound~\eqref{effective_rank_bound} and Remark~\ref{R:sufficient}). The analysis of the relative errors for fixed design and (in large noise case) random design requires more careful study of $\,\eff_{q,k}(\St,\bbeta)$. Let us  motivate why relative errors $\,\mathsf{MSE}(\ebeta)/\mathsf{MSE}(0)\,$ and $\,\mathsf{PE}(\ebeta)/\mathsf{PE}(0)\,$ might be of interest in the first place.
\subsubsection{Motivation for relative errors} \label{S:RelMotivation}
One reason behind studying the relative errors was already mentioned previously. Note that if there is no relation between $\,\Se\,$ and $\,\bbeta$, meaning that $\,\eU^\T \bbeta\,$ is a ``random'' vector, then we can expect  $\,\| \eU^\T \bbeta\|_{\infty} \asymp \| \bbeta\|_2 \sqrt{\log(d)/d}$. In this case, the trivial estimator $\,\widetilde{\bbeta} = 0\,$ achieves error
 \begin{equation}
	\begin{aligned}
		\mathsf{MSE}(0) = \bbeta^\T \Se \bbeta \leq \Tr[\Se] \| \eU^\T \bbeta\|_{\infty}^2 \asymp \|\Se\|  \| \bbeta\|_2^2\, \frac{\reff[\Se]\log(d)}{d}.
	\nonumber
	\end{aligned}
\end{equation}
As long as the eigenvalues of $\,\Se\,$ decay fast, even the trivial estimator gives error close to zero in high dimensions. Here we should highlight that this effect does not appear in low dimensions (and even in high-dimensional but isotropic situations), where the absolute and relative errors are just a multiplicative constant apart. (Same reasoning works for the PE.)
Hence, it is not satisfactory for us to show that the absolute error of our estimator goes to zero with growing sample size and dimension. We would like to get more meaningful conclusions from our results, which would confirm that the proposed estimator does better than the trivial estimator. This naturally leads to the relative errors.

Another motivation comes from the way statisticians evaluate and compare estimators in practical applications. In particular, a widely used performance measure is the \textit{coefficient of determination}, or simply $\,R^2$. For instance, the in-sample version for an estimator $\,\widetilde{\bbeta}\,$ is defined as
\begin{equation}
	\begin{aligned}
		R^2_{in}(\widetilde{\bbeta}) \eqdef 1 - \frac{\sum_{i=1}^n (y_i - \bx_i^\T\widetilde{\bbeta})^2}{\sum_{i=1}^n y_i^2}.
	\nonumber
	\end{aligned}
\end{equation}
The larger this quantity is, the better method we have; its largest possible value is $\,1$, and the value of $\,0\,$ indicates that the estimator does no better than the trivial estimator.
Maximization of $\,R^2_{in}(\widetilde{\bbeta})\,$ would try to fit the observed data perfectly, and in this sense it is not equivalent to minimizing $\,\mathsf{MSE}(\widetilde{\bbeta})/\mathsf{MSE}(0)$. Nevertheless, a crucial observation is that $\,R^2_{in}(\widetilde{\bbeta})\,$ is a relative quantity, which takes into account the performance of the trivial estimator. This supports our choice of the relative MSE as an error measure.

Similar intuition applies to the out-of-sample $\,R_{out}^2\,$ and the relative prediction error $\,\mathsf{PE}(\widetilde{\bbeta})/\mathsf{PE}(0)$, and intuitively it seems that they are linked even stronger.
Note that in applications it is often the case that even small but positive $\,R^2\,$ (e.g. $0.05$) can be considered a success. Therefore, the hope to have $\,\mathsf{MSE}(\widetilde{\bbeta})/\mathsf{MSE}(0)\,$ or $\,\mathsf{PE}(\widetilde{\bbeta})/\mathsf{PE}(0)\,$ converging to $\,0\,$ might be too optimistic in some situations. Having these relative errors smaller than $\,1\,$ already means that the procedure is able to extract some useful signal from the data.

\subsubsection{Why joint conditions on design and regression coefficients?} \label{S:JED1}
Prior to describing the properties of $\,\eff_{q,k}(\St, \bbeta)$, let us show why imposing conditions on the design alone, or imposing conditions on $\,\bbeta\,$ alone can be not enough to establish convergence of the relative errors. It is easier to do for the fixed design case, so let us focus on this setting for now.

For a given design matrix $\,\XX\,$ and an estimator $\,\widetilde\bbeta\,$ we can construct another estimator $\,\widetilde\btheta = \eL\eU^\T\widetilde\bbeta$.  Therefore,
\begin{align}
	\frac{(\widetilde{\bbeta} - \bbeta)^\T \Se (\widetilde{\bbeta} - \bbeta)}{\bbeta^\T \Se \bbeta} = \frac{\| \widetilde{\btheta} - \btheta\|_2^2}{\|\btheta\|_2^2}.
	\nonumber
\end{align}
The relative MSE in canonical domain has nothing to do with $\,\Se$. This demonstrates that getting a good rate is hopeless in high dimension assuming only fast decay of eigenvalues of $\,\Se$.

On the other hand, we might impose strong conditions on $\,\bbeta$, such as sparsity, in which case one could expect even $\,\mathsf{MSE}(\widetilde{\bbeta}) \asymp \sigma^2 s/n\,$ (up to a logarithmic factor) for some appropriate estimator $\,\widetilde{\bbeta}$, where $\,s\,$ measures the degree of sparsity. However, as we mentioned previously, if $\,\Se\,$ and $\,\bbeta\,$ are not related and the eigenvalues of $\,\Se\,$ decay fast, we might have $\,\mathsf{MSE}(0) \asymp \|\Se\|\|\bbeta\|_2^2/d\,$ (again up to logarithmic factors) for the trivial estimator. This implies that there is no much hope in getting vanishing relative error $\,\mathsf{MSE}(\widetilde{\bbeta})/\mathsf{MSE}(0)\,$ in high dimensions.
That is why it seems natural that our bound on the relative error depends on the joint effective dimension, that takes into account not only decay of eigenvalues or only assumptions on $\,\bbeta$, but the joint structure of $\,\Se\,$ and $\,\bbeta$.

\subsubsection{Joint effective dimension} \label{S:JED2}
Now, once we supported the appearance of the joint effective dimension, let us mention its basic properties. (For concreteness we choose the random design setting and consider $\,\eff_{q,k}(\St,\bbeta)$, though the following ideas apply to $\,\eff_{q,r}(\Se,\bbeta)\,$ appearing in the fixed design case.)
\begin{itemize}
	\item $\eff_{q,k}(\St, \bbeta) \leq k$.
	\item $\eff_{q,k}(\St, \bbeta)\,$ is decreasing in $\,q\,$ and increasing in $\,k$.
	\item $\eff_{2, k}(\St, \bbeta) \leq 1$, $\,\eff_{2, d}(\St, \bbeta) = 1$.
	\item $\eff_{0,k}(\St, \bbeta) = \| \bLambda_{\leq k}\bU_{\leq k}^\T\bbeta \|_0\,$ is essentially the sparsity of $\,\bU_{\leq k}^\T\bbeta$.
\end{itemize}
For now let us assume $\,\snr \geq c > 0\,$ and focus on $\,\eff_{q,k}(\St,\bbeta)\,$ only. Recall that the ``main term'' in the relative bounds looks like
\begin{equation}
	\begin{aligned}
		\inf\limits_{q \in [0, 2] } \Bigg\{  \eff_{q,k}(\St,\bbeta)\,\zeta_n^{1-q/2} \Bigg\}	
	\nonumber
	\end{aligned}
\end{equation}
where $\,\zeta_n\,$ is some vanishing rate like $\,1/n$.
Hence, the properties above reveal a tradeoff in the main term:
\begin{itemize}
	\item When $\,q\,$ is large, i.e. closer to $2$, it is easier to control $\,\eff_{q,k}(\St,\bbeta)$; however, the vanishing rate $\,\zeta_n\,$ is raised to a small power, making the convergence slow.
	\item When $\,q\,$ is small, i.e. closer to $0$, it is more difficult to control $\,\eff_{q,k}(\St,\bbeta)$; in contrast, $\,\zeta_n\,$ is raised to a large power potentially enabling fast convergence rate.
\end{itemize}
So, the bound allows to find largest $\,q\,$ for which $\,\eff_{q,k}(\St,\bbeta)\,$ can be bounded in dimension-free and sample size-free manner (or at least the dependence on $\,d\,$ and $\,n\,$ should not be that severe) to facilitate faster convergence rate.

Some scenarios where $\,\eff_{q,k}(\St,\bbeta)\,$ can be bounded more explicitly (for some $q<2$) are discussed below:
\begin{itemize}
	\item  Sparsity of $\,\bU_{\leq k}^\T\bbeta$. Denote $\,s \eqdef \| \bU_{\leq k}^\T \bbeta\|_0\,$ to be the sparsity level. Then, as already mentioned previously ,$\,\eff_{0,k}(\St,\bbeta) = s$.
	\item Approximate sparsity of $\,\bLambda_{\leq k}\bU_{\leq k}^\T\bbeta$. Suppose there exists a small set $\,\mathcal{J} \subseteq [k]\,$ (of size $\,|\mathcal{J}| = s$) of significant components, so that the rest of the components satisfy
	\begin{align}
		\lambda_j^{1/2} |\bu_j^\T \bbeta| \,\lesssim\, \frac{1}{d} \| \bLambda \bU^\T \bbeta\|_2\;\;\;\;\;\text{ for all } j\in [k] \setminus\mathcal{J}.
	\nonumber
	\end{align}
	Then $\,\eff_{1,k}(\St,\bbeta) \lesssim s$.
	\item Polynomial decay. Let $\,\lambda_j/\lambda_1 \asymp j^{-a}$, $\,|\bu_j^\T \bbeta|/|\bu_1^\T \bbeta| \asymp j^{-b}$, where $\,a \geq 0$. We have several cases:
	\begin{itemize}
		\item If $\,a+2b \geq 1\,$ and $\,q \geq 2/(a+2b)$, then $\,\eff_{q,k}(\St,\bbeta) \asymp 1$.
		\item If $\,a+2b \geq 1\,$ and $\,q  < 2/(a+2b)$, then $\,\eff_{q,k}(\St,\bbeta) \asymp k^{1-(a+2b)q/2}$.
		\item If $\,a+2b < 1$, then $\,\eff_{q,k}(\St,\bbeta) \asymp k^{1-(a+2b)q/2}/d^{(1-a-2b)q/2}$.
	\end{itemize}
	Here we omitted logarithmic factors.
	To better understand the dependence of $\,\eff_{q,d}(\St,\bbeta)\,$ on $\,d\,$ in specific case $\,k=d$, in Figure~\ref{F:effdim} we depict this dependence in $\,(a+2b)$ -- $q\,$ axes. In the green region $\,\eff_{q,d}(\St,\bbeta)\,$ does not grow with $\,d$, while in the yellow region $\,\eff_{q,d}(\St,\bbeta)\,$ grows with $\,d\,$ polynomially, and the contours of constant power are illustrated with different colors.
\end{itemize}

\begin{figure}
         \centering\includegraphics[width=0.7\textwidth]{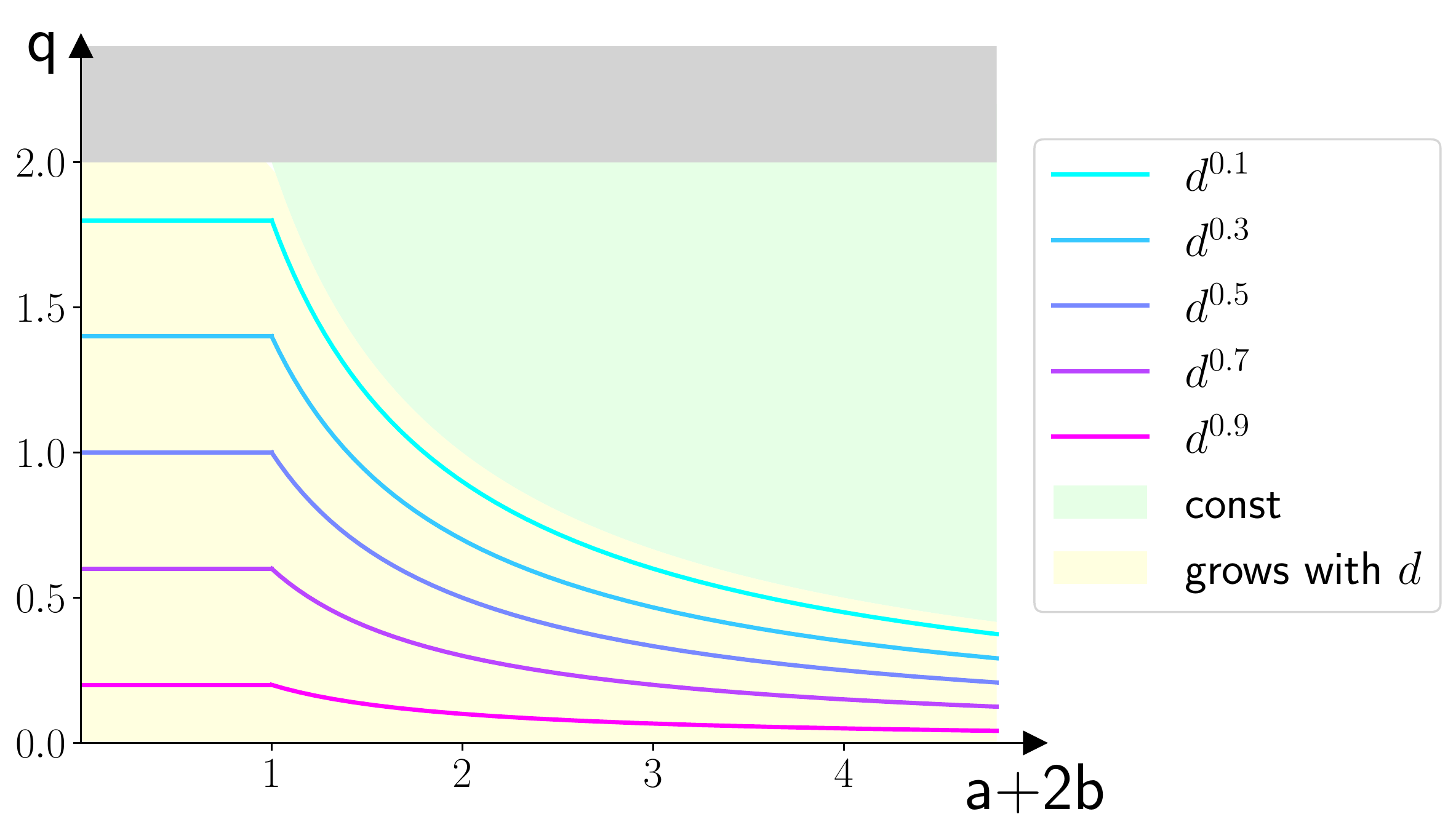}
        \caption{Dependence of $\,\eff_{q,d}(\St,\bbeta)\,$ on $\,d$.}
        \label{F:effdim}
\end{figure}

\subsubsection{Bounds on relative errors in polynomial decay scenario} \label{S:PolyDecay}
Since the ``main term'' conveniently decomposes into several factors, among which the most interesting one -- the joint effective dimension -- was discussed above, the analysis of the relative errors of the NCT estimator in the fixed design and (for large noise case) random design is pretty much complete. It is intriguing though, what happens to the bounds on PE in moderate noise case: recall that in this scenario the ``main term'' is absorbed by the ``additional term'', which does not have a structure allowing a direct analysis of the bounds on the relative error $\,\mathsf{PE}(\ebeta)/\mathsf{PE}(0)$. It is not clear whether they can be stated in a way that will provide better understanding of the relative error. Instead, we can take a look at the particular case of polynomial decay of eigenvalues and regression coefficients: $\,\lambda_j/\lambda_1 \asymp j^{-a}$, $\,|\bu_j^\T \bbeta|/|\bu_1^\T \bbeta| \asymp j^{-b}$. After tedious calculations, one may express the bounds from Theorem~\ref{Th1} and Theorem~\ref{Th2} (i), (ii) in terms of $\,d,n,a,b\,$ only. It turns out, that in this scenario the bound from Theorem~\ref{Th2} (ii) is always worse than the bound from Theorem~\ref{Th2} (i), so we exclude it from consideration. In Figure~\ref{F:rates} we plot the  contours of constant convergence rate on $\,a$ -- $b\,$ plane for the bounds from Theorem~\ref{Th1} and Theorem~\ref{Th2} (i). Different colors of the contours correspond to different rates. The background color describes the assumptions on $\,d\,$ and $\,n\,$ that we make in different regions: in light green zones $\,d\,$ can be much larger than $\,n\,$ (though this is not necessary), in light yellow zones $\,d\,$ is allowed to be at most of the same order as $\,n$, i.e. $\,d \lesssim n$, and in grey zone the rates do not go to zero unless $\,d\,$ is significantly smaller than $\,n$. We again disregard the logarithmic terms.

\begin{figure}
     \begin{subfigure}{\textwidth}
         \centering\includegraphics[width=0.95\textwidth]{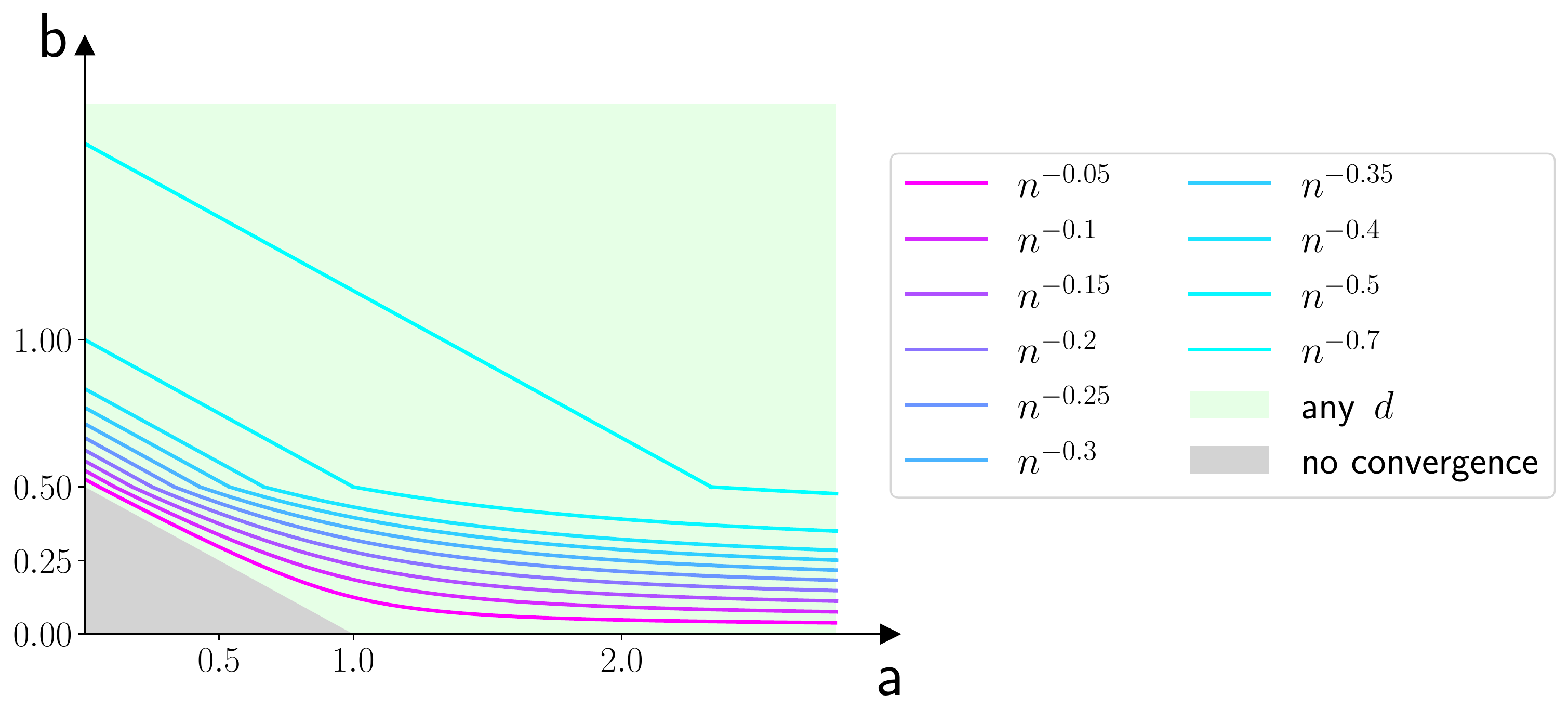}
         \vspace{-0.1cm}
         \caption{Bound on the relative mean squared error $\,\mathsf{MSE}(\ebeta)/\mathsf{MSE}(0)\,$ from Theorem~\ref{Th1};}
     \end{subfigure}
     \vfill
     \vspace{1.5cm}
     \begin{subfigure}{\textwidth}
         \centering\includegraphics[width=0.95\textwidth]{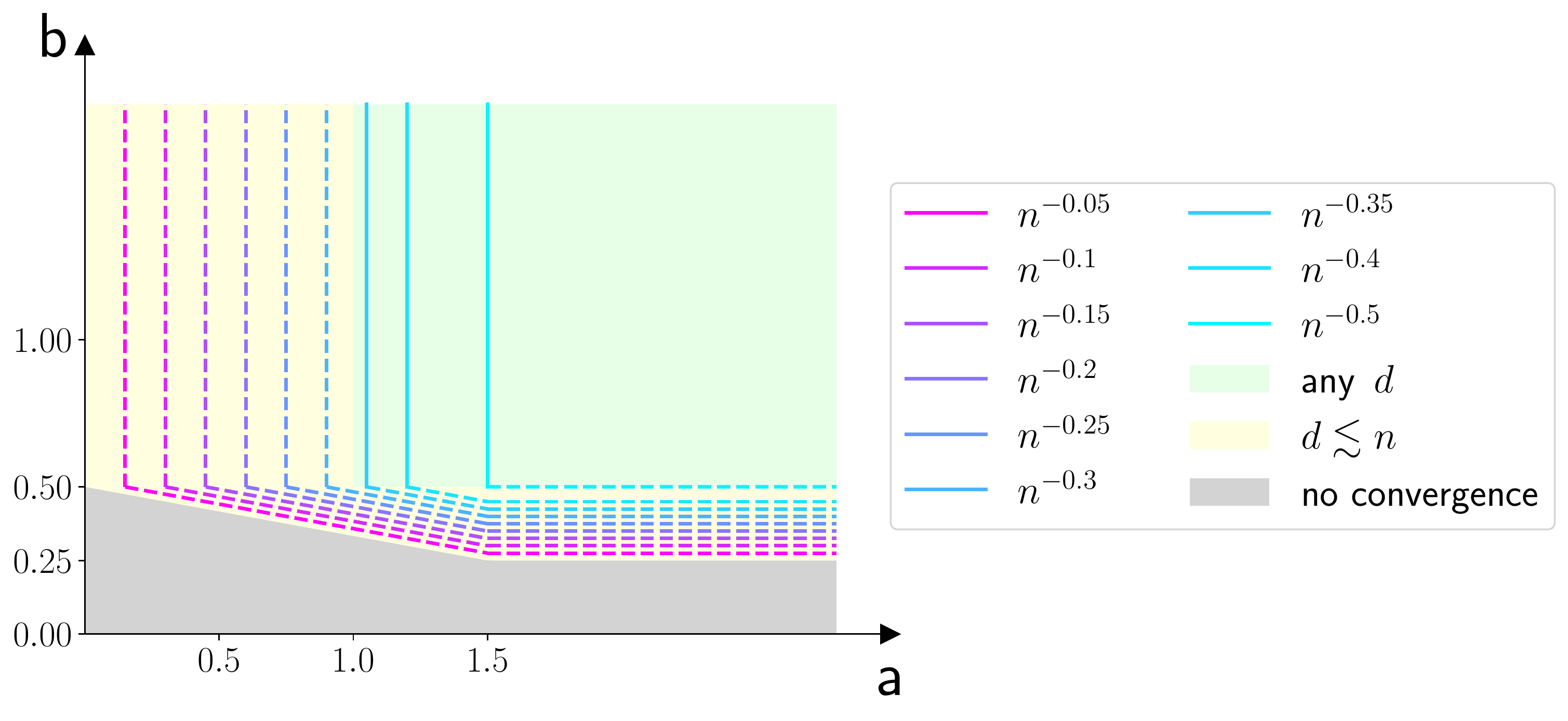}
         \vspace{-0.1cm}
         \caption{Bound on the relative prediction error $\,\mathsf{PE}(\ebeta)/\mathsf{PE}(0)\,$ from Theorem~\ref{Th2} (i);}
     \end{subfigure}
       \caption{Rates for the relative errors of the NCT estimator in polynomial decay scenario.}
        \label{F:rates}
\end{figure}

%
%
%

\section{(Near) minimax optimality of the NCT estimator} \label{S:minimax}
This section focuses on the fixed design setting.
In light of the above discussion, we introduce the following parameter classes for the fixed design linear regression problem (with Gaussian noise, for simplicity): for any design matrix $\,\XX \in \R^{n \times d}$
\begin{align}
	\mathcal{P}^{\XX}(q, \mathsf{D}, \mathsf{S}) \,\eqdef\,
	\left\{
		(\bbeta, \sigma) \in  \R^d \times \R_+:\;\;\;\eff_{q,r}(\Se, \bbeta) \leq \mathsf{D}, \;\snr \geq \mathsf{S}
	\right\}.
	\nonumber
\end{align}
Clearly, we have the following relations:
\begin{align}
	&\mathcal{P}^{\XX}(q, \mathsf{D}, \mathsf{S})   \subset \mathcal{P}^{\XX}(q^\prime, \mathsf{D}, \mathsf{S})  \;\;\;\text{ for }\;\;q < q^\prime, \nonumber\\
	&\mathcal{P}^{\XX}(q, \mathsf{D}, \mathsf{S})   \subset \mathcal{P}^{\XX}(q, \mathsf{D}^\prime, \mathsf{S})  \;\;\;\text{ for }\;\;\mathsf{D} < \mathsf{D}^\prime, \nonumber\\
	&\mathcal{P}^{\XX}(q, \mathsf{D}, \mathsf{S})  \subset \mathcal{P}^{\XX}(q, \mathsf{D}, \mathsf{S}^\prime)  \;\;\;\text{ for }\;\;\mathsf{S} > \mathsf{S}^\prime. \nonumber
	\nonumber
\end{align}
In a sense, this orders instances of problems by difficulty. Note that for fixed $\,d,n\,$ the family of classes is parameterized by three quantities: $\,q$, $\mathsf{D}$ and $\mathsf{S}$. While $\mathsf{S}$ is independent of the other two (since it is the only quantity related to the magnitude of noise), $\,q\,$ and $\,\mathsf{D}\,$ bring some ambiguity. More specifically, if a specific instance of a problem belongs to $\,\mathcal{P}^{\XX}(q, \mathsf{D}, \mathsf{S})$, it also belongs to $\,\mathcal{P}^{\XX}(q-\Delta q, \mathsf{D} \exp(\Delta \log\mathsf{D}), \mathsf{S})\,$ for some perturbations $\,\Delta q$ and $\,\Delta \log\mathsf{D}$. To reduce this indeterminacy, we restrict $\mathsf{D}$ to be at most of logarithmic order when $q > 0$, i.e. $\,\mathsf{D} \lesssim \log(r)$. Hence, just two quantities essentially control a ``complexity'' of an instance of the linear regression problem: smallest $\,q\,$ for which $\eff_{q,r}(\Se, \bbeta)  \asymp \log(r)$ and the signal-to-noise ratio.

It is reasonable to ask whether our NCT estimator is minimax optimal over these classes with respect to the relative MSE. The following theorem answers this question.
\begin{theorem} \label{Th:minimax}
	(i) Let $\,q \in (0; 2)\,$ and  $\,\mathsf{D} \lesssim \log(r)\,$ be large enough. Then for any design matrix $\,\XX\in\R^{n\times d}$
	\begin{align}
		\inf\limits_{\widetilde{\bbeta}} \sup\limits_{\mathcal{P}^{\XX}(q, \mathsf{D}, \mathsf{S}) } \;\E\left[ \frac{\mathsf{MSE}(\widetilde{\bbeta})}{\mathsf{MSE}(0)}\right] \;\gtrsim\; \frac{1}{(\mathsf{S}^2\, n)^{1-q/2}}\,.
		\nonumber
	\end{align}
	(ii) Let $\,q = 0$ (in this case $\,\mathsf{D}\,$ plays role of the sparsity of the principal components). Then for any design matrix $\,\XX\in\R^{n\times d}$
	\begin{align}
		\inf\limits_{\widetilde{\bbeta}} \sup\limits_{\mathcal{P}^{\XX}(q, \mathsf{D}, \mathsf{S})} \;\E\left[ \frac{\mathsf{MSE}(\widetilde{\bbeta})}{\mathsf{MSE}(0)}\right] \;\gtrsim\; \frac{\mathsf{D} \log(er/{\mathsf{D}})}{\mathsf{S}^2\, n}\,.
		\nonumber
	\end{align}
\end{theorem}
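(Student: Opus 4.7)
The plan is to reduce the problem to a Gaussian sequence model in the canonical domain and then apply Fano's inequality over a Varshamov--Gilbert packing sitting on a sphere. Under the standing Gaussian-noise assumption of this section, since $n^{-1}\ZZ^\T \ZZ = \Id_r$, the canonical least-squares vector $W \eqdef \widetilde{\btheta}^{LS} \sim \mathcal{N}(\btheta, (\sigma^2/n)\Id_r)$ is a sufficient statistic for $\btheta$. In the canonical parametrization $\mathsf{MSE}(\widetilde{\bbeta}) = \|\widetilde{\btheta} - \btheta\|_2^2$ and $\mathsf{MSE}(0) = \|\btheta\|_2^2$, and the class $\mathcal{P}^{\XX}(q,\mathsf{D},\mathsf{S})$ is scale-invariant in $\btheta$ (the ratio $\|\btheta\|_q^q/\|\btheta\|_2^q$ is unchanged under $\btheta \mapsto \alpha\btheta$), so it suffices to restrict to $\|\btheta\|_2 = \sigma\mathsf{S}$. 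This makes $\mathsf{MSE}(0) = \sigma^2\mathsf{S}^2$ deterministic and reduces the task to lower-bounding $\inf_{\widetilde{\btheta}}\sup \E\|\widetilde{\btheta} - \btheta\|_2^2$ over $\{\|\btheta\|_2 = \sigma\mathsf{S}\} \cap \{\|\btheta\|_q^q \leq \mathsf{D}(\sigma\mathsf{S})^q\}$ under noise $\mathcal{N}(0, (\sigma^2/n)\Id_r)$.

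For the packing, fix an integer $k$ and, by Varshamov--Gilbert, obtain indicators $\omega^{(1)}, \ldots, \omega^{(M)} \in \{0,1\}^{r-1}$ with $\|\omega^{(i)}\|_0 = k-1$, pairwise Hamming distance $\gtrsim k$, and $\log M \gtrsim k\log(er/k)$. Set
\begin{equation}
\btheta^{(i)} \,\eqdef\, \sigma\mathsf{S}\Bigl( \sqrt{1 - (k-1)\epsilon^2}\,e_1 \,+\, \epsilon \sum_{j=2}^r \omega^{(i)}_{j-1}\,e_j \Bigr),
\nonumber
\end{equation}
with $\epsilon$ to be calibrated. Then $\|\btheta^{(i)}\|_2 = \sigma\mathsf{S}$, $\|\btheta^{(i)}\|_0 = k$, and $\|\btheta^{(i)}\|_q^q/\|\btheta^{(i)}\|_2^q \leq 1 + k\epsilon^q$, so membership in $\mathcal{P}^{\XX}(q, \mathsf{D}, \mathsf{S})$ holds whenever $k\epsilon^q \lesssim \mathsf{D}$. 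Pairwise squared $\ell_2$-distances are $\asymp \sigma^2\mathsf{S}^2 k\epsilon^2$ and the Gaussian KL divergences are $(n/(2\sigma^2))\|\btheta^{(i)} - \btheta^{(j)}\|_2^2 \lesssim \mathsf{S}^2 n k \epsilon^2$. Fano's inequality then yields $\inf_{\widetilde{\btheta}}\max_i \E\|\widetilde{\btheta} - \btheta^{(i)}\|_2^2 \gtrsim \sigma^2\mathsf{S}^2 k\epsilon^2$ as long as $\mathsf{S}^2 nk\epsilon^2 \lesssim k\log(er/k)$, which calls for $\epsilon^2 \asymp \log(er/k)/(\mathsf{S}^2 n)$.

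It remains to choose $k$. For part (ii) I take $k = \mathsf{D}$; the $\ell_0$-constraint holds automatically and the relative rate is $k\epsilon^2/\mathsf{S}^2 \asymp \mathsf{D}\log(er/\mathsf{D})/(\mathsf{S}^2 n)$, matching the claim. For part (i) with $q\in(0,2)$ I balance the $\ell_q$-ball constraint $k\epsilon^q \lesssim \mathsf{D}$ against $\epsilon^2 \asymp 1/(\mathsf{S}^2 n)$ (up to logarithms) to obtain $k \asymp \mathsf{D}(\mathsf{S}^2 n)^{q/2}$; plugging back yields a relative rate of $k\epsilon^2 \asymp \mathsf{D}/(\mathsf{S}^2 n)^{1-q/2}$, which is $\gtrsim (\mathsf{S}^2 n)^{-(1-q/2)}$ since $\mathsf{D}$ is assumed large enough. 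The main obstacle will be the implicit joint calibration: the balanced equation $k(\log(er/k))^{q/2} \asymp \mathsf{D}(\mathsf{S}^2 n)^{q/2}$ must admit a solution with $k \leq r$, which is precisely where the hypotheses ``$\mathsf{D}$ large enough'' and $\mathsf{D} \lesssim \log r$ come in; modulo this tuning and the careful verification that the packing sits inside $\mathcal{P}^{\XX}(q,\mathsf{D},\mathsf{S})$ with the right constants, the argument follows the classical minimax template for Gaussian sequence models on $\ell_q$-balls.
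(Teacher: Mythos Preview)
Your reduction to the Gaussian sequence model via the canonical coordinates is the same as the paper's, but from that point the two proofs diverge in a genuine way. The paper treats (i) and (ii) with two different constructions: for (i) it takes the deterministic polynomial-decay set $\{|\theta_j| = j^{-1/q},\ \sigma = \|\btheta\|_2/\mathsf{S}\}$ and invokes the classical decomposition of the minimax risk over hyperrectangles into univariate problems (Johnstone's monograph), obtaining $\sum_{j=1}^r \min(\sigma/\sqrt{n},\, j^{-1/q})^2 \gtrsim (\sigma/\sqrt{n})^{2-q}$ directly; for (ii) it uses Varshamov--Gilbert but with the twist $\bomega \mapsto 2\bomega - \mathbf{e}$ so that the hypotheses live in $\{-1,1\}^r$ and all have equal $\ell_2$-norm. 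Your single anchor-plus-spikes construction $\sqrt{1-(k-1)\epsilon^2}\,e_1 + \epsilon\sum \omega_j e_{j+1}$ handles both cases uniformly via Fano, which is cleaner and fully self-contained (no appeal to the hyperrectangle theory). The price is the extra calibration step in (i): you must solve $k(\log(er/k))^{q/2} \asymp \mathsf{D}(\mathsf{S}^2 n)^{q/2}$ for an admissible $k\leq r$ and verify $1-(k-1)\epsilon^2 \geq 0$, whereas the paper's polynomial construction sidesteps this tuning entirely. Both routes implicitly need the regime $(\mathsf{S}^2 n)^{q/2} \lesssim r$ for the stated rate to be the correct one, and both yield the same bound up to the logarithmic slack already tolerated in the theorem. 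A minor slip: in your part (ii) the relative rate is $k\epsilon^2$, not $k\epsilon^2/\mathsf{S}^2$ (the $\mathsf{S}^2$ is already inside $\epsilon^2$); the final expression you wrote is nonetheless correct.
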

\noindent This result almost directly follows from classical minimax lower bounds for Gaussian sequence model, with some slight adjustments. The bounds align well with \cite{lqballs}, which studies minimax lower bounds over $\ell_q$-balls.
We emphasize that the lower bounds presented here hold for any design matrix $\,\XX$, unlike a lot of minimax optimality results that just find one ``difficult'' design to show the lower bound or impose restrictive assumptions on $\,\XX$. This supports our intuition that for the relative bounds design matrix does not play role as much as the interactions of the covariance and the regression coefficients.

One could notice that in (i) the rate does not completely match the upper bound of Theorem~\ref{Th1}, which also contains a factor $\,\mathsf{D}\,$ and a logarithmic factor. However, as we highlighted earlier, without much loss of generality we restrict $\,\mathsf{D}\,$ to be of at most of order $\,\log(r)$. Therefore, the discrepancy between the upper bound for NCT and the associated minimax lower bounds is just in the logarithmic factors. 

\section{Theoretical properties of the GCT estimator} \label{S:theoryGCT}

Now we move to a brief study of the theoretical guarantees for the GCT estimator~\eqref{estimator2}. The results are not as insightful as the ones for the NCT estimator, and we focus on the fixed design setting only. However, once we state the MSE bound, the theory for the random design can be developed in the same way as for the NCT estimator.

\begin{theorem} \label{Th1a}
	Suppose Assumption~\ref{A:Noise} is satisfied. Let $\,\varphi \geq 0$. Take $\,\tau = \widehat{\lambda}_1^{\varphi/2}\sigma \rho\,$ with $\,\rho = \frac{2}{\sqrt{n}}\left(\log(2r/\delta)\right)^{1/\alpha}$. Then, with probability $\,1-\delta$, the GCT estimator $\,\ebeta\,$ from~\eqref{estimator2} with parameter $\,\varphi\,$ and thresholding at level $\,\tau\,$ satisfies
	\begin{align}
		\mathsf{MSE}(\ebeta) &\lesssim \sum\limits_{j=1}^r \min\left( \frac{\widehat\lambda_1^{\varphi/2}}{\widehat\lambda_j^{\varphi/2}} \,\sigma \rho,\, |\theta_j|\right)^2.
	\nonumber
	\end{align}
Moreover, when $\,\mathsf{T}_\tau[\,\cdot\,] = \mathsf{SOFT}_\tau[\,\cdot\,]$, the matching lower bound takes place, i.e. the above upper bound is tight:
	\begin{align}
		\mathsf{MSE}(\ebeta) &\gtrsim \sum\limits_{j=1}^r \min\left( \frac{\widehat\lambda_1^{\varphi/2}}{\widehat\lambda_j^{\varphi/2}} \,\sigma \rho,\, |\theta_j|\right)^2
	\nonumber
	\end{align}
	with probability $\,1-\delta$.
\end{theorem}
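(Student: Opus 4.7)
The plan is to pass to canonical coordinates and reduce the theorem to a componentwise scalar thresholding analysis, after which the upper bound follows from the two defining properties of a generalized thresholding rule, and the soft-thresholding lower bound follows from a careful case split on the magnitude of the canonical signal. Since $\eU^\T \eU = \Id_r$ and $\Se = \eU \eL^2 \eU^\T$, one checks $\eL \eU^\T \bbeta = \btheta$ and $\eL \eU^\T \ebeta = \widehat{\btheta}$, so $\mathsf{MSE}(\ebeta) = \| \eL \eU^\T (\ebeta - \bbeta) \|_2^2 = \sum_{j=1}^r (\widehat\theta_j - \theta_j)^2$. Using $\widetilde{\btheta}^{LS} = \btheta + n^{-1/2} \widehat{\bV}^\T \beps$ and writing $a_j \eqdef \widehat\lambda_j^{\varphi/2}$, $\eta_j \eqdef n^{-1/2} \widehat{\bv}_j^\T \beps$, the canonical GCT estimator reads $\widehat\theta_j = a_j^{-1} \mathsf{T}_\tau[a_j (\theta_j + \eta_j)]$. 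Since $\beps$ is independent of $\XX$, conditioning on $\XX$ makes each $\widehat{\bv}_j$ a fixed unit vector, so Assumption~\ref{A:Noise} together with a union bound over $j \in [r]$ yields $\max_j |\eta_j| \leq \sigma\rho/2$ on an event $\mathcal{E}$ of probability at least $1-\delta$. The choice $\tau = a_1 \sigma\rho$ then forces $|a_j \eta_j| \leq a_1 \sigma \rho / 2 = \tau/2$ uniformly on $\mathcal{E}$.

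For the upper bound, I set $z_j = a_j(\theta_j + \eta_j)$ and $z_j^* = a_j \theta_j$, so that $|z_j - z_j^*| \leq \tau/2$ on $\mathcal{E}$. Part (ii) of Definition~\ref{thres} gives $|\mathsf{T}_\tau[z_j] - z_j^*| \leq |\mathsf{T}_\tau[z_j] - z_j| + |z_j - z_j^*| \leq 3\tau/2$, while part (i) applied with $z' = z_j^*$ yields $|\mathsf{T}_\tau[z_j]| \leq c |z_j^*|$ and hence $|\mathsf{T}_\tau[z_j] - z_j^*| \leq (c+1) |z_j^*|$. Combining the two and squaring, $(\mathsf{T}_\tau[z_j] - z_j^*)^2 \lesssim \min(\tau, |z_j^*|)^2$. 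Dividing by $a_j^2$, noting $\tau/a_j = (\widehat\lambda_1/\widehat\lambda_j)^{\varphi/2} \sigma \rho$ and $|z_j^*|/a_j = |\theta_j|$, and summing over $j$ yields the claimed upper bound.

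For the matching lower bound with $\mathsf{T}_\tau = \mathsf{SOFT}_\tau$, I split into regimes on the same event $\mathcal{E}$. If $|z_j^*| \leq \tau/2$, then $|z_j| \leq \tau$ forces $\mathsf{SOFT}_\tau[z_j] = 0$ and the squared error equals exactly $(z_j^*)^2 = \min(\tau, |z_j^*|)^2$. If $|z_j^*| > \tau/2$, then $|z_j^*| > |a_j\eta_j|$ so $\operatorname{sign}(z_j) = \operatorname{sign}(z_j^*)$; if additionally $|z_j| \leq \tau$ the error is $|z_j^*| > \tau/2$, while if $|z_j| > \tau$ the explicit formula gives error $a_j \eta_j - \operatorname{sign}(z_j^*)\tau$, of magnitude at least $\tau - |a_j \eta_j| \geq \tau/2$. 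In every case $(\mathsf{SOFT}_\tau[z_j] - z_j^*)^2 \gtrsim \min(\tau, |z_j^*|)^2$; dividing by $a_j^2$ and summing closes the argument. The main obstacle is precisely the intermediate regime $|z_j^*| \asymp \tau$, where one must rule out cancellation between the noise and the shrinkage bias; the calibration $\tau = a_1 \sigma \rho$ is exactly what pins down $\operatorname{sign}(z_j)$ through $|a_j\eta_j| \leq \tau/2 \leq |z_j^*|$ and keeps the shrinkage amount larger than twice any possible noise compensation.
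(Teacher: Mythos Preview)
Your proof is correct and follows essentially the same approach as the paper: pass to canonical coordinates, invoke the noise bound $\|\bxi\|_\infty \le \sigma\rho/2$ from Lemma~\ref{L:noise}, and do a componentwise analysis using the two defining properties of a generalized thresholding rule for the upper bound and the explicit form of $\mathsf{SOFT}_\tau$ for the lower bound. The only cosmetic difference is in the lower-bound case split: the paper branches on whether $|z_j| \gtrless \tau$ (i.e.\ whether the coordinate is zeroed out), which immediately gives either $|\theta_j|$ or at least $\widehat\lambda_j^{-\varphi/2}\tau - |\xi_j| \ge \tfrac{1}{2}(\widehat\lambda_1/\widehat\lambda_j)^{\varphi/2}\sigma\rho$, whereas you branch first on $|z_j^*| \gtrless \tau/2$ and then sub-split; both routes yield the same bound with the same constants.
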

\noindent
The obtained bound may be difficult to comprehend, but we state it in the most general form to make sure it is tight and applicable in wide range of scenarios. When $\,\theta_j = 0\,$ for $\,j \geq m$, it has no estimation errors beyond the first $\,m\,$ principal components, adapting very well to focusing only on the low dimensions estimation like PCR.  If, in addition, $\,\widehat{\lambda}_1 / \widehat{\lambda}_m\,$ is bounded, we have
$$
\mathsf{MSE}(\ebeta) \lesssim \sum\limits_{j=1}^m \min\left( \sigma \rho,\, |\theta_j|\right)^2,
$$
which is not much larger than the MSE of PCR.  It can even be much smaller than PCR when $\,|\theta_j|\,$ are small for many indices $\,j$.  In general, GCT outperforms NCT  when $\,|\theta_j|\,$ decays fast enough.

The next corollary allows to make sure that in general the bound is essentially dimension-free: as for Theorem~\ref{Th1}, the rate can be expressed in terms of the effective rank $\,\reff[\Se]$.

\begin{corollary} \label{Cor1}
	Under assumptions of Theorem~\ref{Th1a}, with probability $\,1-\delta$, the GCT estimator $\,\ebeta\,$ from~\eqref{estimator2} with parameter $\,\varphi \geq 0\,$ satisfies
	\begin{align}
		\mathsf{MSE}(\ebeta) &\lesssim \left( \|\Se\|\|\bbeta\|^2 + \sigma \|\Se\|^{1/2}\|\bbeta\| \,\reff[\Se]^{1/2}\right) \frac{(\log (2r/\delta))^{2/(2+\varphi)\alpha}}{n^{1/(2+\varphi)}}.
	\nonumber
	\end{align}
\end{corollary}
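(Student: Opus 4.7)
The plan is to start from the tight pointwise bound
\[
\mathsf{MSE}(\ebeta) \;\lesssim\; \sum_{j=1}^r \min\!\Big( \widehat\lambda_1^{\varphi/2}\,\widehat\lambda_j^{-\varphi/2}\,\sigma\rho,\;|\theta_j|\Big)^2
\]
supplied by Theorem~\ref{Th1a}, and to collapse the right-hand side into the desired dimension-free quantity via one interpolation inequality followed by H\"older. Recalling $\theta_j = \widehat\lambda_j^{1/2}\,\widehat{\bu}_j^\T\bbeta$ and using $\min(a,b)^2 \leq a^{2\gamma} b^{2(1-\gamma)}$ with $\gamma \eqdef 1/(2+\varphi)\in(0,1/2]$, each summand is bounded by
\[
\widehat\lambda_1^{\varphi/(2+\varphi)}\,(\sigma\rho)^{2/(2+\varphi)}\,\widehat\lambda_j^{1/(2+\varphi)}\,|\widehat{\bu}_j^\T\bbeta|^{2(1+\varphi)/(2+\varphi)}.
\]
The choice of $\gamma$ is dictated by the requirement that $\rho^{2\gamma}$ reproduce the advertised $n^{-1/(2+\varphi)}$ factor; the resulting exponent on $\widehat\lambda_j$ then comes out positive and equal to $1/(2+\varphi)$, which is precisely what makes the next step work.

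Summing in $j$ and applying H\"older's inequality with conjugate exponents $p = 2+\varphi$ and $p^\prime = (2+\varphi)/(1+\varphi)$ to the sequences $(\widehat\lambda_j)$ and $((\widehat{\bu}_j^\T\bbeta)^2)$ gives
\[
\sum_{j=1}^r \widehat\lambda_j^{1/(2+\varphi)}\,|\widehat{\bu}_j^\T\bbeta|^{2(1+\varphi)/(2+\varphi)} \;\leq\; \Tr[\Se]^{1/(2+\varphi)}\,\|\bbeta\|_2^{2(1+\varphi)/(2+\varphi)},
\]
where I use $\sum_j (\widehat{\bu}_j^\T\bbeta)^2 = \|\eU^\T\bbeta\|_2^2 \leq \|\bbeta\|_2^2$ from the orthonormality of the columns of $\eU$. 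Writing $\widehat\lambda_1 = \|\Se\|$ and $\Tr[\Se] = \reff[\Se]\,\|\Se\|$ consolidates the bound into
\[
\mathsf{MSE}(\ebeta) \;\lesssim\; A^{(1+\varphi)/(2+\varphi)}\,B^{1/(2+\varphi)}\,\rho^{2/(2+\varphi)}, \qquad A \eqdef \|\Se\|\,\|\bbeta\|_2^2, \quad B \eqdef \sigma^2\,\reff[\Se].
\]

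It remains to split this weighted geometric mean into the additive form $A + \sqrt{AB}$. Writing it as $A\cdot(B/A)^{1/(2+\varphi)}$ and comparing the ratio to $1$: if $A \geq B$ the parenthesized factor is at most $1$, yielding $A$; if $A < B$ then $1/(2+\varphi) \leq 1/2$ forces $(B/A)^{1/(2+\varphi)} \leq (B/A)^{1/2}$, yielding $\sqrt{AB}$. Either way,
\[
A^{(1+\varphi)/(2+\varphi)}\,B^{1/(2+\varphi)} \;\leq\; A + \sqrt{AB} \;=\; \|\Se\|\,\|\bbeta\|_2^2 \,+\, \sigma\,\|\Se\|^{1/2}\,\|\bbeta\|_2\,\reff[\Se]^{1/2}.
\]
Substituting $\rho^{2/(2+\varphi)} \asymp n^{-1/(2+\varphi)}(\log(2r/\delta))^{2/((2+\varphi)\alpha)}$ delivers the corollary. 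The only mildly delicate point is this final trade-off between $A$ and $B$; everything else is a mechanical application of interpolation and H\"older, with the single free parameter $\gamma$ forced by matching the exponent of $\rho$ in the target rate.
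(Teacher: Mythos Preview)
Your proof is correct but follows a genuinely different route from the paper. The paper splits the index set at a threshold: defining $\mathcal{P} = \{j:\widehat\lambda_j \geq \widehat\lambda_1\,\rho^{2/(2+\varphi)}\}$, it bounds the sum over $j\notin\mathcal{P}$ by $|\theta_j|^2 = \widehat\lambda_j(\widehat\bu_j^\T\bbeta)^2$ (giving the $\|\Se\|\|\bbeta\|_2^2$ term directly) and the sum over $j\in\mathcal{P}$ by $\min(a,b)^2\leq ab$ together with the definition of $\mathcal{P}$ and the Cauchy--Schwarz bound $\|\btheta\|_1 \leq \|\Se\|^{1/2}\|\bbeta\|_2\,\reff[\Se]^{1/2}$ (giving the cross term). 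The two additive pieces $A$ and $\sqrt{AB}$ thus fall out of the split immediately, with no intermediate geometric mean. Your approach replaces this case analysis by a single uniform interpolation $\min(a,b)^2\leq a^{2\gamma}b^{2(1-\gamma)}$ followed by H\"older, which is slicker in that it avoids any splitting of the index set, at the cost of the extra closing argument that the geometric mean $A^{(1+\varphi)/(2+\varphi)}B^{1/(2+\varphi)}$ is dominated by $A+\sqrt{AB}$. Both arguments are short and elementary; the paper's version is perhaps more transparent about where each of the two terms comes from, while yours is more systematic and would generalize more readily to other target exponents.
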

\noindent Note that when $\,\varphi = 0\,$ we essentially recover the rate obtained after Theorem~\ref{Th1}.
The rate deteriorates when $\,\varphi\,$ is far from $0$, and this is explainable: the GCT procedure significantly deviates from the natural one, leading to a worse bound in the worst case, i.e. when the only assumption is the control of effective rank, and spikiness of canonical coefficients is not justified.
\begin{remark}[Simplifications in specific cases]
In several specific cases the rate from Theorem~\ref{Th1a} can be made much more explicit. We omit logarithmic terms.
\begin{itemize}
\item Polynomial decay. If $\,\widehat\lambda_j/\widehat{\lambda_1} \asymp j^{-a}\,$ and $\,|\bu_j^\T\bbeta|/|\bu_1^\T\bbeta| \asymp j^{-b}\,$ for $\,a \geq 0,\, a+2b \geq 1$, then with high probability
	\begin{equation}
	\begin{aligned}
		\mathsf{MSE}(\ebeta) &\lesssim \left( \frac{\sigma^2}{n}\right)^{\frac{a+2b-1}{a(\varphi+1)+2b}}.
	\nonumber
	\end{aligned}
\end{equation}
\item Sparsity. If there exists a set $\,\mathcal{J}\,$ of size $\,|\mathcal{J}| = s\,$ such that $\,\bu_j^\T\bbeta = 0\,$ for $\,j \notin \mathcal{J}\,$ and $\,\widehat\lambda_{\max \mathcal{J}} \gtrsim \widehat\lambda_1$, then with high probability
	\begin{equation}
	\begin{aligned}
		\mathsf{MSE}(\ebeta) &\lesssim \frac{s\sigma^2}{n}.
	\nonumber
	\end{aligned}
\end{equation}
\item Approximate sparsity. If there exists a set $\,\mathcal{J}\,$ of size $\,|\mathcal{J}| = s\,$ such that $\,\widehat{\lambda}_j^{1/2} |\bu_j^\T\bbeta| \lesssim \|\btheta\|_2/d\,$ for $\,j \notin \mathcal{J}\,$ and $\,\widehat\lambda_{\max \mathcal{J}} \gtrsim \widehat\lambda_1$, then with high probability
	\begin{equation}
	\begin{aligned}
		\mathsf{MSE}(\ebeta) &\lesssim \frac{s\sigma^2}{n} + \frac{\|\btheta\|_2^2}{d}.
	\nonumber
	\end{aligned}
\end{equation}
\item Factor Model regime. If $\,\lambda_1 \asymp \ldots \asymp \lambda_m \asymp d$, $\,\lambda_{m+1} \asymp \ldots \asymp \lambda_d \asymp 1\,$ for some $\,m$, then with high probability
	\begin{equation}
	\begin{aligned}
		\mathsf{MSE}(\ebeta) &\lesssim \frac{m\sigma^2}{n} + \frac{\|\bbeta\|_2^2}{d}.
	\nonumber
	\end{aligned}
\end{equation}
\end{itemize}
Therefore, the rate from Theorem~\ref{Th1a} can adapt well to these specific structures despite the deteriorating rate of Corollary~\ref{Cor1}, which is only an upper bound.
\end{remark}

\section{Miscellaneous aspects} \label{S:tuning}
\subsection{Computational complexity for single $\tau$}
To start with, we focus on the case when a good value of $\tau$ is somehow known, and analyze the computational complexity of the GCT estimators. In particular, this includes the NCT estimator.
The computation of SVD of $\,\XX\,$ (specifically, $\,\eL\,$ and $\,\eU$) takes $\,O(dn\min(d,n))\,$ time.  Once we have SVD, computing the matrix $\,\bA \eqdef \eU \eL^{-1-\varphi}\,$ and the vector $\,\bbb \eqdef \eL^{-1+\varphi} \eU^\T \,\frac{\XX^\T \YY}{n}\,$ needed before the generalized thresholding takes $\,O(dn)\,$ time.
Obtaining $\,\ebeta\,$ for already computed $\,\bA\,$ and $\,\bbb\,$ takes $\,O(d\min(d,n))\,$ time. Therefore, the total computational time of our procedure is $\,O(dn\min(d,n))$.
The computation is as fast as the SVD of the design matrix.

Note that computational complexity of the LASSO is $\,O(n\min(d,n)^2)$, when we compute its solution path via a modification of Least Angle Regression, see \cite{LAR}.

\subsection{Efficient tuning of thresholding level $\,\tau$} \label{eff_tuning}

Our approach requires to tune the hyperparameter $\,\tau$.
Whatever $\,\tau\,$ is, we anyway have to compute $\,\bA\,$ and  $\,\bbb$. This already takes $\,O(dn\min(d,n))\,$ time.
Applying the generalized thresholding and combining the result into the vector $\,\ebeta\,$ takes $\,O(d\min(d,n))\,$ time.
This means that we can try $\,n\,$ different values of $\,\tau\,$ ``for free'' --- the computational complexity will be still of the same order as computing  $\,\ebeta\,$ for a single value of $\,\tau$. But we can go even further, if we focus on the GCT estimators with the soft or hard thresholding.

Notice that varying $\tau$ continuously from $0$ to $+\infty$, we still can get only $\,(\min(d,n)+1)\,$ different solutions $\,\ebeta\,$ for given $\,\XX$, $\,\YY\,$ because we threshold a vector of size $\,\min(d,n)$.
Therefore, we can compute the whole solution path for $\,\tau\,$ from $0$ to $+\infty$. However, we are not that interested in the solution path, since we do not expect to get coefficients entering the picture one by one as in LASSO for sparse regression. Instead, this can be useful for $L$-fold cross validation, where we will have at most $\,(L\min(d,n)+2)\,$ ``interesting'' values of $\,\tau\,$ giving different solutions $\,\ebeta$. In total, this implies that we need
\begin{equation}
	\begin{aligned}
		O(dn\min(d,n)) + (L\min(d,n)+2) \cdot O(d\min(d,n)) = O(dn\min(d,n) + Ld\min(d,n)^2)
	\nonumber
	\end{aligned}
\end{equation}
operations to find the best $\,\tau\,$ (providing smallest cross-validation error). In practice, we typically use $\,L\,$ of constant order, e.g.  $L=5\,$ or $\,L=10$, which leads to the total computational complexity of $\,O(dn\min(d,n))\,$ for our optimally tuned estimator -- same as for a single value of $\,\tau$. Leave-one-out cross validation takes slightly more computations, namely, $\,O(dn\min(d,n)^2)$.

Similar ``free tuning'' property holds for LASSO (we again refer to \cite{LAR}).
However, for example the ridge regression does not possess this nice properties: different values of regularization parameter will lead to different estimators, and one  has to ``guess'' a discrete set of values to be tried.

\subsection{Optimality of cross-validation} \label{optimality_of_CV}
More formally, let $\{ \mathcal{B}_l \}_{l=1}^L$ be the split of the data point indices $\,[n]\,$ into $L$ approximately equally-sized disjoint blocks, i.e.
\begin{align}
	\mathcal{B}_l \cap \mathcal{B}_{l^\prime} = \varnothing\;\;\;\text{ for all }\;l\neq l^\prime,\; l,l^\prime \in [L]\;\;\;\text{ and }\;\;\; \bigcup_{l=1}^L \mathcal{B}_l = [n]
	\nonumber
\end{align}
satisfying $\,\lfloor n/L \rfloor \leq |\mathcal{B}_l| \leq \lfloor n/L \rfloor + 1\,$ for all $\,l \in[L]$.
The $L$-fold cross-validation leads to the following choice of the hyperparameter $\tau$:
\begin{align}
	\tau^{cv} &\,\eqdef\, \arg\min\limits_{\tau \geq 0}\; \frac{1}{L} \sum\limits_{l=1}^L \frac{1}{|\mathcal{B}_l|} \sum\limits_{i \in \mathcal{B}_l} \left( y_i - \bx_i^\T \ebeta_\tau^{(l)} \right)^2
	\label{tau_cv} \\ &\,\;=\;\, \arg\min\limits_{\tau \in \mathcal{T}}\; \frac{1}{L} \sum\limits_{l=1}^L \frac{1}{|\mathcal{B}_l|} \sum\limits_{i \in \mathcal{B}_l} \left( y_i - \bx_i^\T \ebeta_\tau^{(l)} \right)^2,
	\nonumber
\end{align}
where $\,\ebeta_\tau^{(l)}\,$ is the NCT estimator with thresholding at level $\,\tau\,$ computed on the part of the sample $\,\{ (\bx_i, y_i) \}_{i\in[n]\setminus\mathcal{B}_l}$.
Note that here $\,\mathcal{T}\,$ is a set of $\,|\mathcal{T}| \leq Lr +2\,$ known ``interesting'' values giving all possible variety of estimators (i.e. $\,\{ \ebeta_\tau^{(l)}\}_{l\in[L],\,\tau\geq 0} = \{ \ebeta_\tau^{(l)}\}_{l\in[L],\,\tau\in\mathcal{T}}$),
as discussed in Section~\ref{eff_tuning}.
An oracle counterpart of $\,\tau^{cv}\,$, defining optimal value of the hyperparameter w.r.t. the expected cross-validation error, is given by 
\begin{align}
	\tau^{oracle} &\,\eqdef\, \arg\min\limits_{\tau \geq 0}\; \frac{1}{L} \sum\limits_{l = 1 }^L \E \left[ \left( y - \bx^\T \ebeta_\tau^{(l)} \right)^2\right]
\label{tau_oracle}		\\ &
	\,\;=\;\,
	\arg\min\limits_{\tau \in \mathcal{T}}\; \frac{1}{L} \sum\limits_{l=1}^L \mathsf{PE}(\ebeta_\tau^{(l)}).
	\nonumber
\end{align}
We have the following result stating that the choice of $\tau$ based on cross-validation performs as well as the oracle choice (in terms of the expected cross-validation error). .
\begin{theorem} \label{Th:CV}
Suppose Assumptions~\eqref{A:Noise} -- \eqref{A:tech} are fulfilled.
For technical simplicity, consider the truncated NCT estimator:
 \begin{equation}
	\begin{aligned}
		\ebeta_\tau \eqdef \eU_{\leq k^*} \eL_{\leq k^*}^{-1} \,\mathsf{SOFT}_{\tau}\left[ \eL_{\leq k^*}^{-1} \eU_{\leq k^*}^\T \,\frac{\XX^\T \YY}{n}\right]
	\nonumber
	\end{aligned}
\end{equation}
with $\,k^*\,$ from Theorem~\ref{Th2}.
Then, with probability $\,1-\delta$, the above estimator with thresholding at level $\,\tau^{cv}$ chosen by $L$-fold cross-validation satisfies
\begin{align}
	&\frac{1}{L} \sum\limits_{l=1}^L \mathsf{PE}(\ebeta_{\tau^{cv}}^{(l)})\lesssim \frac{1}{L} \sum\limits_{l=1}^L \mathsf{PE}(\ebeta_{\tau^{oracle}}^{(l)})
	\nonumber \\ &\qquad\qquad+ \left(\|\St\| \|\bbeta\|_2^2 + \sigma^2( \log(2r/\delta) )^{2/\alpha}\right) \left( \sqrt{\frac{L\log(Lr/\delta)}{n}} + \frac{L(\log(Lr/\delta))^{2/\alpha}}{n}\right).
	\nonumber
\end{align}
\end{theorem}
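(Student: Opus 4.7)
The plan is to apply a standard oracle-inequality reduction for model selection over a finite candidate set and then establish the required uniform concentration. Define $R_l(\tau) \eqdef \E\bigl[(y - \bx^\T \ebeta_\tau^{(l)})^2 \bigm| \mathcal{D}^{(-l)}\bigr] = \mathsf{PE}(\ebeta_\tau^{(l)}) + \E[\eps^2]$, $\bar R(\tau) \eqdef \tfrac{1}{L}\sum_l R_l(\tau)$, and similarly let $\bar{\widehat R}(\tau)$ denote the empirical cross-validation objective from \eqref{tau_cv}. Since the additive $\E[\eps^2]$ does not depend on $\tau$, one has $\tau^{oracle} \in \arg\min_{\tau \in \mathcal{T}} \bar R(\tau)$, while $\tau^{cv} \in \arg\min_{\tau \in \mathcal{T}} \bar{\widehat R}(\tau)$ by construction. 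The usual sandwich
\begin{align*}
\bar R(\tau^{cv}) - \bar R(\tau^{oracle})
&\leq \bigl[\bar R(\tau^{cv}) - \bar{\widehat R}(\tau^{cv})\bigr] + \bigl[\bar{\widehat R}(\tau^{oracle}) - \bar R(\tau^{oracle})\bigr] \\
&\leq 2\max_{\tau \in \mathcal{T}} \bigl|\bar{\widehat R}(\tau) - \bar R(\tau)\bigr|
\end{align*}
reduces the theorem to a uniform concentration bound on the averaged cross-validation error over $\mathcal{T}$, whose cardinality is at most $Lr+2$ by the discussion in Section~\ref{eff_tuning} (a contraction of a $k^*$-dimensional vector by soft thresholding can take only finitely many shapes as $\tau$ varies, and similarly across folds).

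Next, fix a fold $l$ and condition on the training data $\mathcal{D}^{(-l)}$; the estimator $\ebeta_\tau^{(l)}$ is then deterministic and lies in the span of the first $k^*$ empirical eigenvectors built from $\mathcal{D}^{(-l)}$. The out-of-fold residuals $r_{i,\tau,l} = \bx_i^\T(\bbeta - \ebeta_\tau^{(l)}) + \eps_i$, $i \in \mathcal{B}_l$, are then independent sub-Weibull random variables with Orlicz-$\psi_\alpha$ norm controlled by $M_\tau^{(l)} \lesssim \|\St^{1/2}(\bbeta - \ebeta_\tau^{(l)})\|_2 + \sigma$ under Assumptions~\ref{A:Noise} and \ref{A:X}. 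A sub-Weibull Bernstein inequality for $\sum_{i \in \mathcal{B}_l}(r_{i,\tau,l}^2 - \E\, r_{i,\tau,l}^2)$ (cf.\ \cite{Weibull}), followed by averaging over $l$, gives that with probability $1-\delta'$,
\begin{equation*}
\bigl|\bar{\widehat R}(\tau) - \bar R(\tau)\bigr| \;\lesssim\; M^2 \left(\sqrt{\tfrac{L\log(1/\delta')}{n}} + \tfrac{L(\log(1/\delta'))^{2/\alpha}}{n}\right),
\end{equation*}
where $M$ is any high-probability uniform upper bound on $\max_{l,\tau} M_\tau^{(l)}$.

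The main technical obstacle is controlling $M$ uniformly over the entire family $\mathcal{T}$; this is exactly where truncation to the first $k^*$ principal components enters the statement of the theorem. By the contractivity $\|\mathsf{SOFT}_\tau(\bv)\|_2 \leq \|\bv\|_2$ and the identity $\ebeta_\tau^{(l)} = \eU_{\leq k^*}^{(-l)} (\eL_{\leq k^*}^{(-l)})^{-1} \mathsf{SOFT}_\tau\bigl(\widetilde{\btheta}^{LS,(-l)}_{k^*}\bigr)$, it suffices to bound $\|\St^{1/2}\ebeta_\tau^{(l)}\|_2$ on the single deterministic object $\widetilde{\btheta}^{LS,(-l)}_{k^*}$, uniformly in $\tau$. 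On the eigenstructure event already used in the proof of Theorem~\ref{Th2} (Lemma~\ref{L:psi_n} combined with the relative perturbation bounds of \cite{Wahl}), the sample eigenvalues and eigenvectors are close to the population ones, so that $\|\St^{1/2}\eU_{\leq k^*}^{(-l)} (\eL_{\leq k^*}^{(-l)})^{-1}\|$ is of order $1$, whence
\begin{equation*}
M^2 \;\lesssim\; \|\St\|\|\bbeta\|_2^2 + \sigma^2(\log(2r/\delta))^{2/\alpha},
\end{equation*}
using the high-probability tail of $(\eL^{(-l)})^{-1}(\eU^{(-l)})^\T (\XX^{(-l)})^\T \beps^{(-l)}/(n-|\mathcal{B}_l|)$ coming from Assumption~\ref{A:Noise} to handle the noise contribution. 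A final union bound over the $L(Lr+2)$ pairs $(l,\tau)$, taking $\delta' = \delta/[L(Lr+2)]$, replaces $\log(1/\delta')$ by a constant times $\log(Lr/\delta)$ and yields the advertised rate.
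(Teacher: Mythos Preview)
Your proposal is correct and follows essentially the same route as the paper: the sandwich/ERM reduction to a uniform deviation over the finite candidate set, a sub-Weibull Bernstein inequality (from \cite{Weibull}) applied conditionally on the training folds, and the crucial uniform control of the Orlicz norm via $\|\St^{1/2}\eU_{\leq k^*}^{(-l)}(\eL_{\leq k^*}^{(-l)})^{-1}\|\lesssim 1$ from Lemma~\ref{L:opnorm} together with the noise bound of Lemma~\ref{L:noise}. The only cosmetic difference is that the paper union-bounds over the at most $r+1$ distinct estimators per fold rather than over all of $\mathcal{T}$, but both counts yield the same $\log(Lr/\delta)$ factor.
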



The result resembles, for instance, \cite{CV0, CV} (see Theorem 7.1 in the former and Theorem 1 in the latter) and many other works, and the logic behind it is quite standard. However, since our framework is not as general as in \cite{CV0} or \cite{CV}, we state the high probability bound rather than in expectation, and we avoid almost sure boundedness condition on the response and the possible predictions of our estimator (unlike the aforementioned literature). We also emphasize once again that $\,\tau^{cv}\,$ and $\,\tau^{oracle}\,$ from \eqref{tau_cv} and \eqref{tau_oracle} are minimizers across all $\,\tau \geq 0$, and the structure of our estimator allows to compute $\tau^{cv}$ defined in such a way in a reasonable time, which is quite unusual feature. 

From Theorem~\ref{Th2} and Theorem~\ref{Th:CV} it follows that $\,L^{-1} \sum_{l=1}^L \mathsf{PE}(\ebeta_{\tau^{cv}}^{(l)}) \lesssim \Diamond_\delta\,$ with probability $\,1-\delta$, where $\,\Diamond_\delta$ is the error bound from Theorem~\ref{Th2} (best of (i) and (ii)). This bound holds since $\,\tau^{oracle}\,$ is no worse than the choices of $\,\tau\,$ from Theorem~\ref{Th2} in terms of the expected cross-validation error, and because the extra term in Theorem~\ref{Th:CV} does not exceed the error bounds from Theorem~\ref{Th2} (treating $\,L$ as constant).

 It worth mentioning that the result of Theorem~\ref{Th:CV} is not really what one aims for. Ideally, we would like to obtain a high probability bound of the form $\,\mathsf{PE}(\ebeta_{\tau^{cv}}) \lesssim \mathsf{PE}(\ebeta_{\tau^{opt}}) + \Delta_\delta\,$ with small $\,\Delta_\delta$, where $\,\tau^{opt} \eqdef \arg\min_{\tau \geq 0}\; \mathsf{PE}(\ebeta_{\tau})\,$ is the optimal value of the hyperparameter, which may differ from $\,\tau^{oracle}$. This would consequently imply $\,\mathsf{PE}(\ebeta_{\tau^{cv}}) \lesssim \Diamond_\delta+ \Delta_\delta\,$ with high probability. Instead of analyzing the expected prediction error of the estimator trained on the whole sample, Theorem~\ref{Th:CV} is concerned with the expected cross-validation error. However, as we already mentioned, even classical works on cross-validation, such as \cite{CV0} and \cite{CV}, also state results of this flavor, and many papers focusing specifically on cross-validation actually work with the expected cross-validation error, which anyway is believed to be a good proxy for the expected prediction error. With this in mind, we hope our result gives a convincing confirmation that the cross-validation procedure applied to our estimator is reasonable, even though in terms of the expected cross-validation error.
Since the main focus of this paper is not on the cross-validation, we do not go beyond this.

To conclude the discussion on the cross-validation, we mention that in principle one may want to tune $\,\varphi\,$ in addition to $\,\tau\,$ using cross-validations as well. In that case, pairs of the hyperparameters $(\tau, \varphi)$ should be chosen from some prespecified candidate set $\mathcal{T} \times \Phi \subset \R_+ \times \R_+$ (e.g. the Cartesian product of two grids) of a finite size $|\mathcal{T}| \times |\Phi| < \infty$. Optimal pairs $\,(\tau^{cv}, \varphi^{cv})\,$ and $\,(\tau^{oracle}, \varphi^{oracle})\,$ should be defined as the solutions to similar optimization problems as \eqref{tau_cv} ad \eqref{tau_oracle}, respectively, but this time the estimator $\,\ebeta_{\tau, \varphi}\,$ depends also on $\,\varphi$, and the optimization is over the finite candidate set. Then a result similar to Theorem~\ref{Th:CV} holds, but with$\,\log(Lr)\,$ in the bound replaced by $\,\log(|\mathcal{T}|\times|\Phi|)$.

\section{Discussion} \label{S:discussion}

We provide a new prospective on the non-sparse high-dimensional linear regression problem. The proposed family of GCT estimators serves as a bridge between two classical paradigms: sparse regression and principal components regression. W.r.t. the absolute errors, the fast decay of eigenvalues of the covariance is enough to ensure convergence even in high dimensions without any assumptions on the regression coefficients. Moreover, we argue that the relative errors are more appropriate in the high-dimensional regression with the eigenvalue decay, and that the complexity of a linear regression problem is characterized by the signal-to noise ratio (instead of the magnitude of noise) and the interaction between the covariance and regression coefficients, expressed by the joint effective dimension (instead of assumptions on the regression coefficients). It is not really important what the design matrix is, and we do not need to impose restrictive assumptions on it, if we choose relative errors as a measure of performance and standardize the data properly. The NCT estimator is minimax optimal for any design over suitable parameter classes in this paradigm. Hopefully, our insights shed some light on the nature of the non-sparse high-dimensional linear regression.

	We leave several important directions for further investigation.
		First of all, despite our joint effective dimension is quite well-motivated, it does not mean that there are no other structural assumptions related to the eigenvalue decay. New discoveries in the structure of the high-dimensional linear regression can potentially lead to other procedures, whose minimax optimality should be analyzed over appropriate parameter classes.
		
	Also, even though the analysis of our structural assumptions and procedures for fixed design seems quite complete, there are unanswered questions in the random design setting. It is not clear whether the bounds of Theorem~\ref{Th2} can be improved and whether the associated relative errors can be represented in a convenient way. An uncertainty brought by the covariance matrix requires developing and applying new advanced statistical tools for the analysis of minimax optimality in random design linear regression in high dimensions.

	Furthermore, from the numerical experiments, postponed to Appendix~\ref{S:experiments}, we observe that even regularized estimators (such as NCT)  behave unexpectedly around the interpolation threshold $\,d = n$. Our theoretical results do not predict the bumps that errors as functions of the dimension exhibit in this region. This definitely worth studying in the future.
	
In addition, the uncertainty quantification for the estimated parameters and function values is of significant interest, as well as 
possible extensions of our ideas to nonparametric regression in reproducing kernel Hilbert space (RKHS). 
Beyond the linear model, can penalized quasi-likelihood on canonical parameters share similar properties to those in the regression problem?

\section*{Acknowledgements}
We thank gratefully the Editor, the Associate Editor and the Referees for constructive comments and valuable suggestions which led to significant improvements on the paper.  The research was supported by ONR grant N00014-19-1-2120, NSF grant DMS-1662139,  and NIH grant 2R01-GM072611-14.

\begin{appendices}

\section{Simulation studies} \label{S:experiments}
We compare the following methods:
\begin{itemize}
	\item ``\textsf{NCT}'': Natural Canonical Thresholding estimator~\eqref{estimator} with efficient hyperparameter tuning by 10-fold CV.
	\item ``\textsf{GCT}'': Generalized Canonical Thresholding estimator~\eqref{estimator2} with $\,\varphi=1$, the soft thresholding, and  with efficient hyperparameter tuning by 10-fold CV.
	\item ``\textsf{OLS}'': Ordinary Least Squares. When $\,d > n$, the min norm solution is considered.
	\item ``\textsf{PCR}'': Principal Component Regression. The number of PCs is chosen by 10-fold CV.
	\item ``\textsf{Ridge}'': Ridge regression with 10-fold CV (default implementation from R-package \textsf{glmnet}).
	\item ``\textsf{LASSO}'': LASSO with 10-fold CV (default implementation from R-package \textsf{glmnet}).
\end{itemize}

\begin{figure}
     \begin{subfigure}{\textwidth}
         \includegraphics[width=0.49\textwidth]{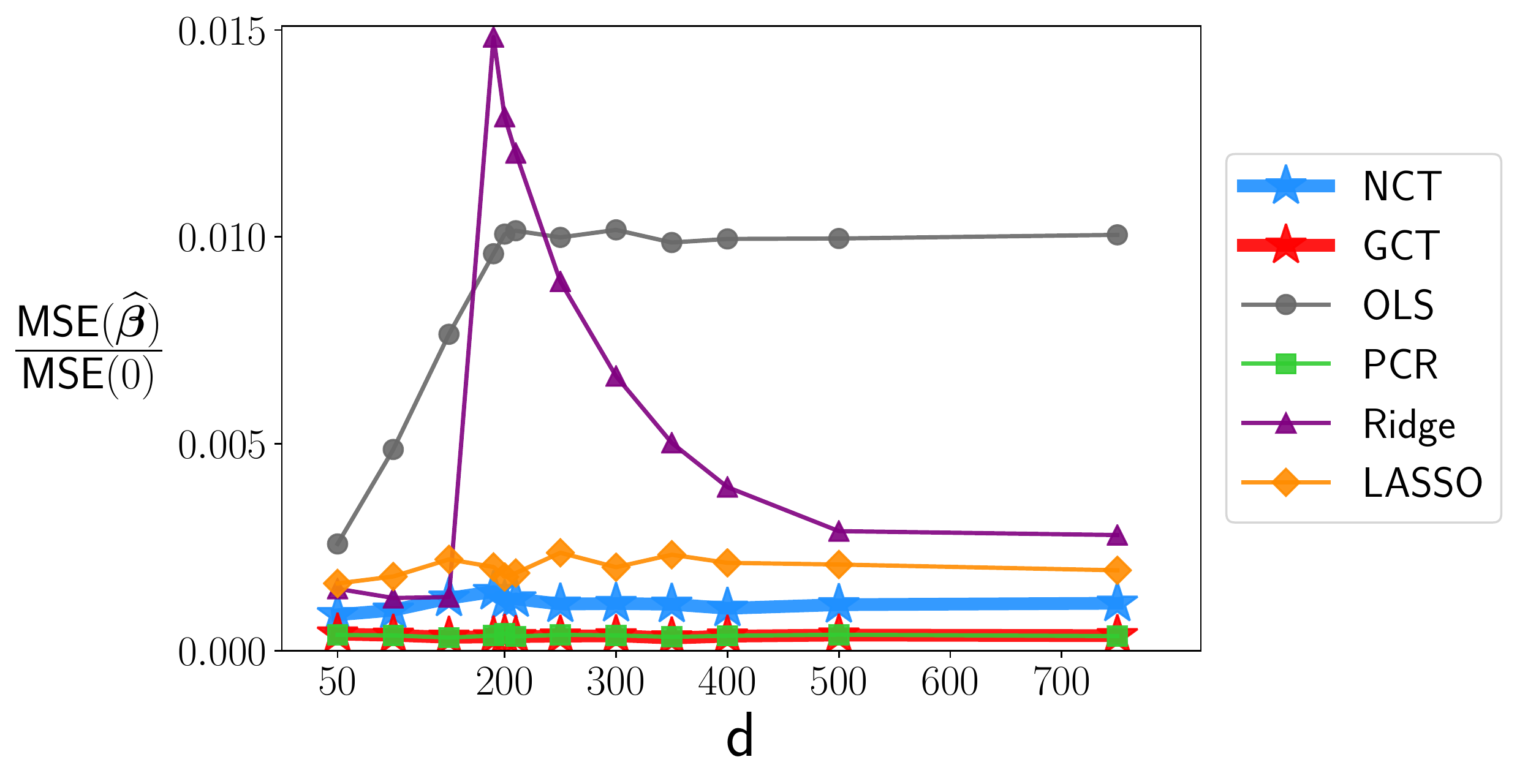}
         \hfill\hspace{0.3cm}
         \includegraphics[width=0.49\textwidth]{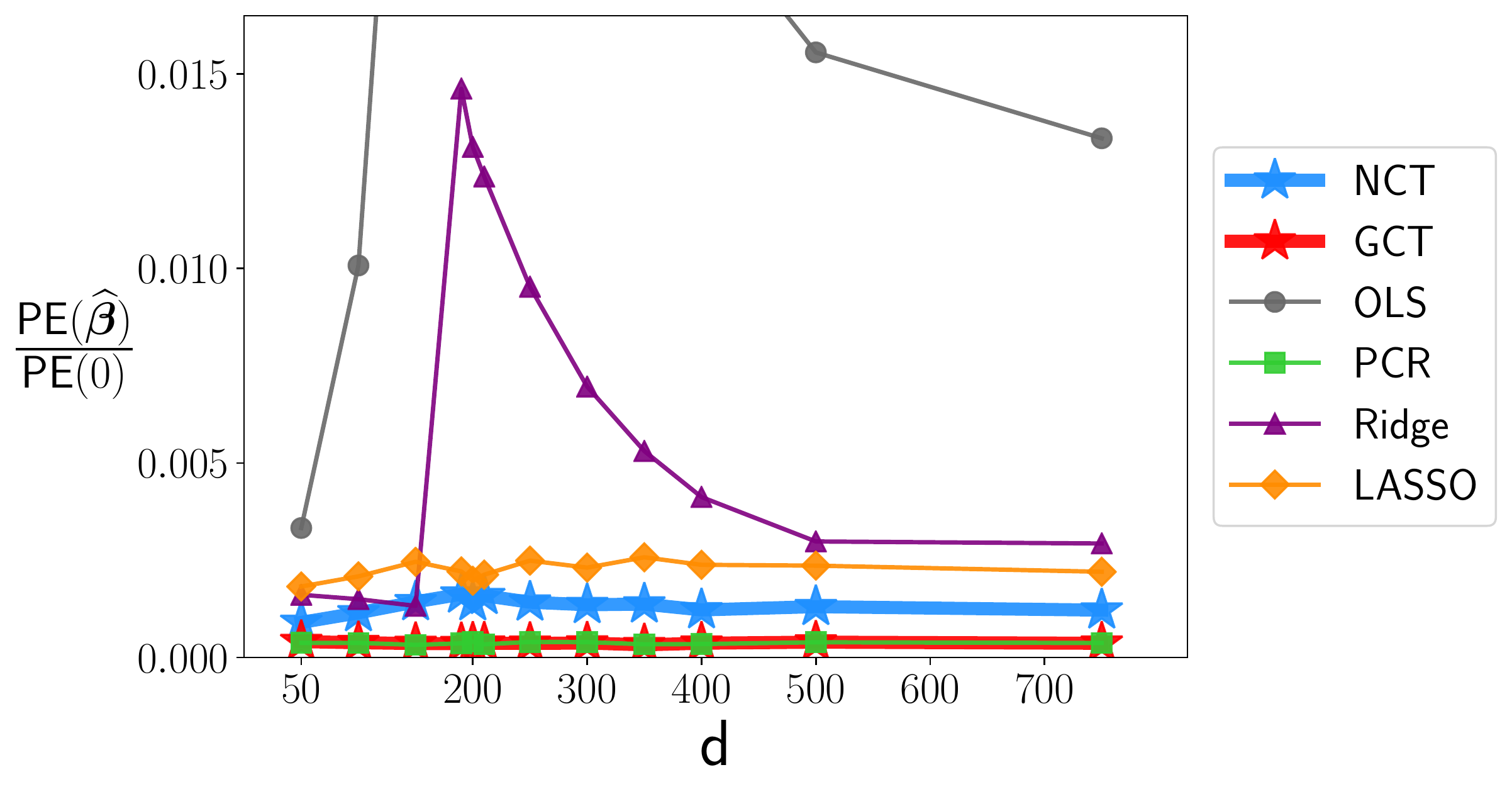}
         \vspace{-0.7cm}
         \caption{$a = 2$, $\,b = 2$;}
     \end{subfigure}
     \vfill
     \vspace{1cm}
     \begin{subfigure}{\textwidth}
         \includegraphics[width=0.49\textwidth]{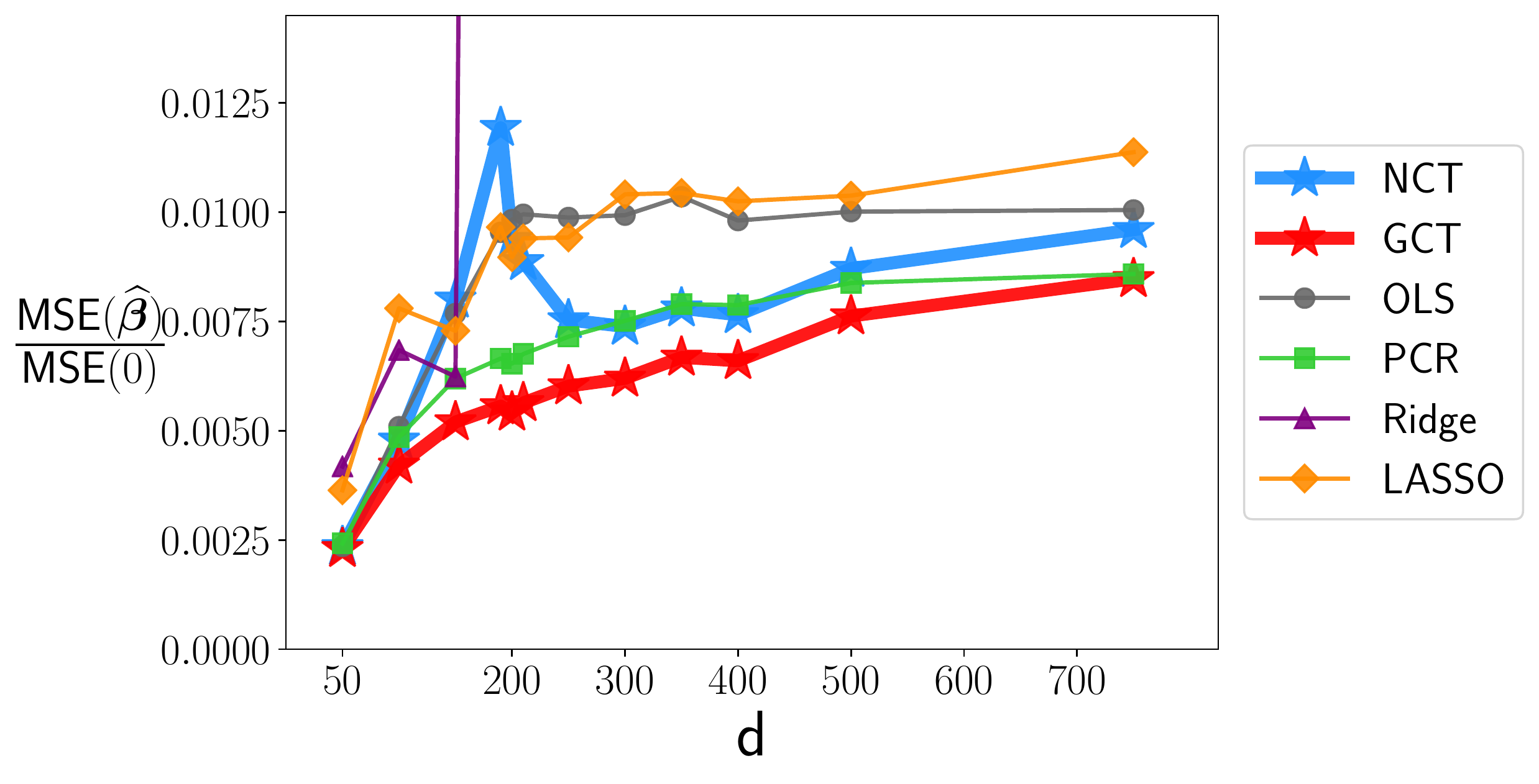}
         \hfill\hspace{0.3cm}
         \includegraphics[width=0.49\textwidth]{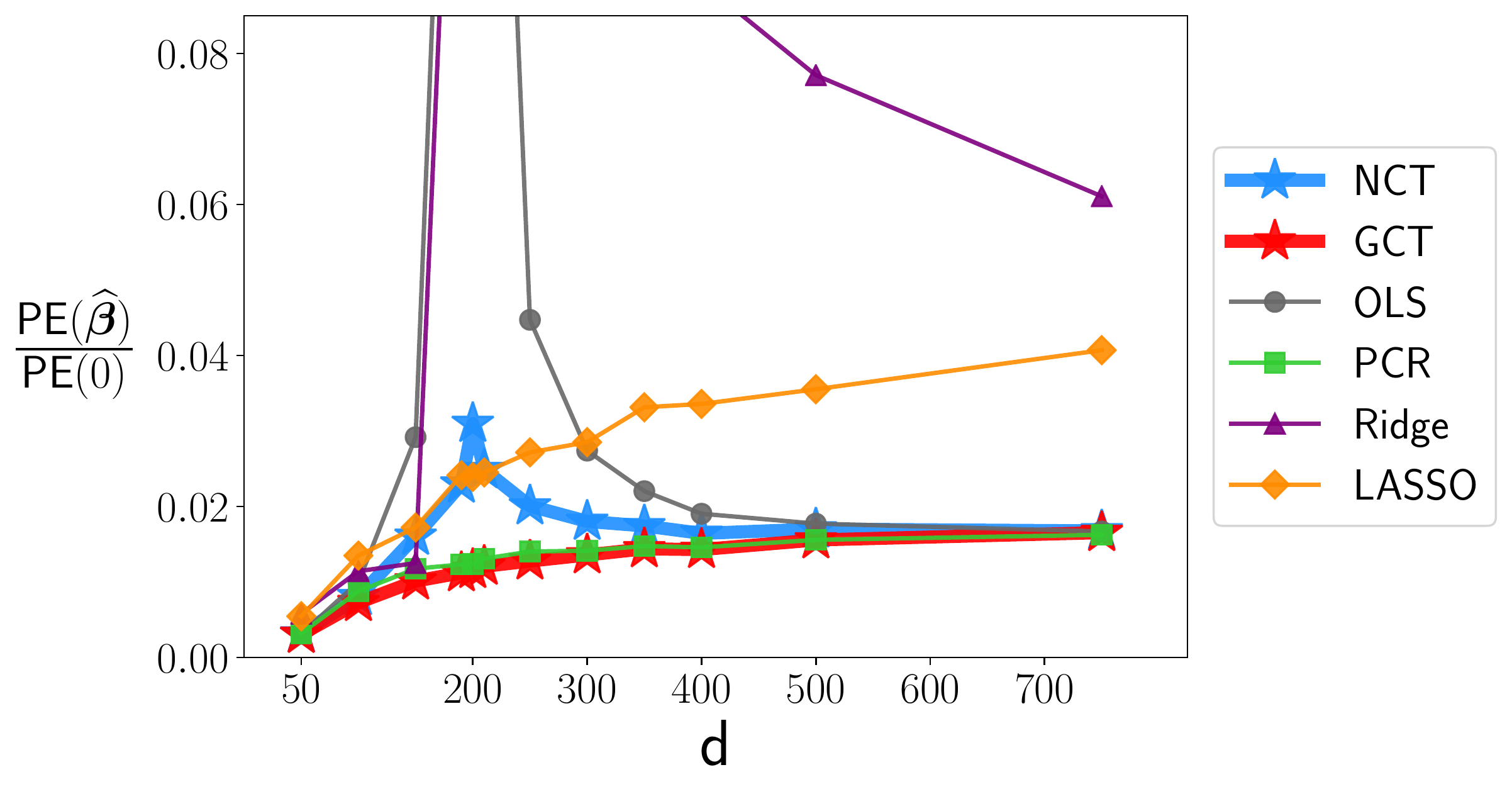}
         \vspace{-0.7cm}
         \caption{$a = 1$, $\,b = 0.5$;}
     \end{subfigure}
     \vfill
     \vspace{1cm}
     \begin{subfigure}{\textwidth}
         \includegraphics[width=0.49\textwidth]{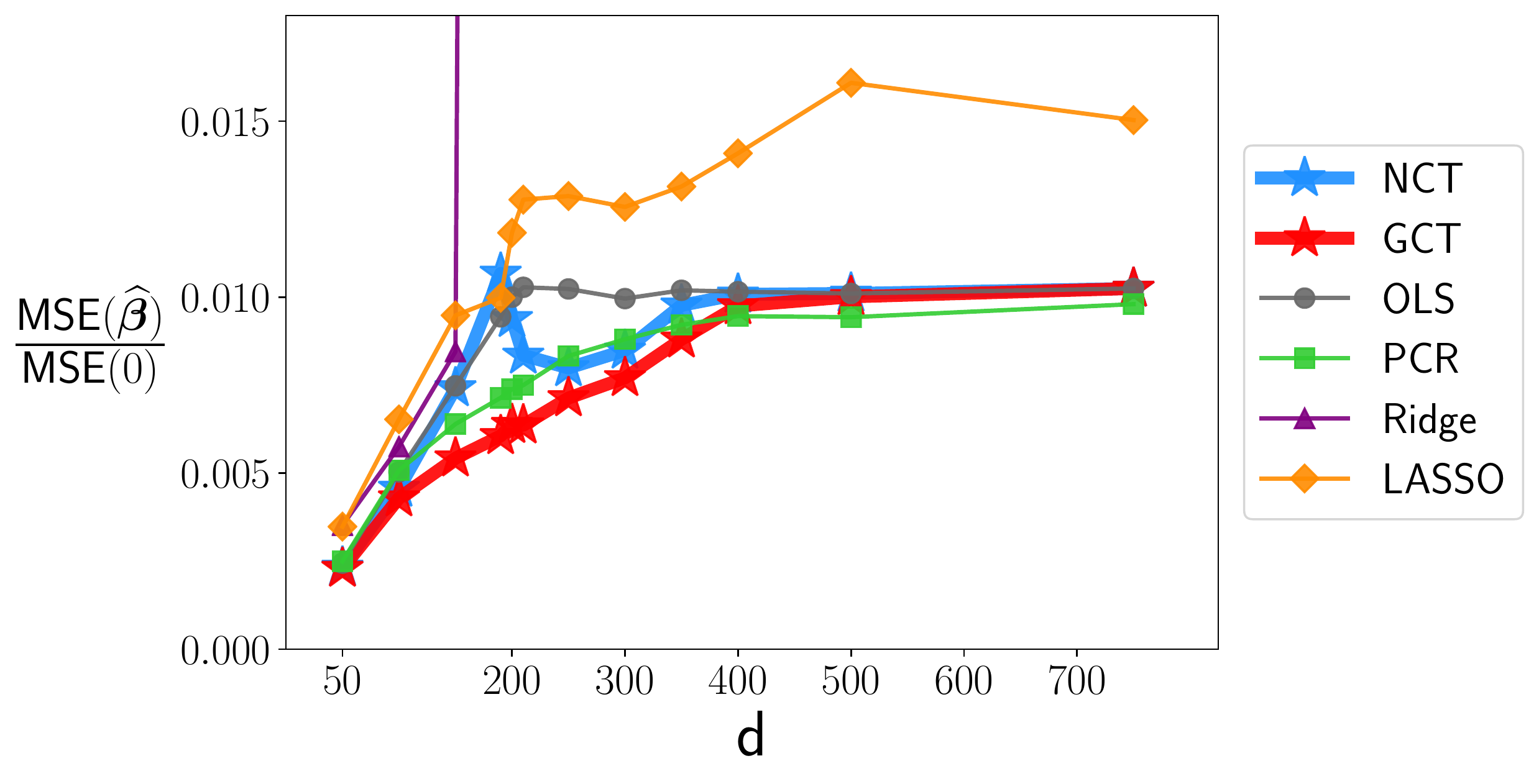}
         \hfill\hspace{0.3cm}
         \includegraphics[width=0.49\textwidth]{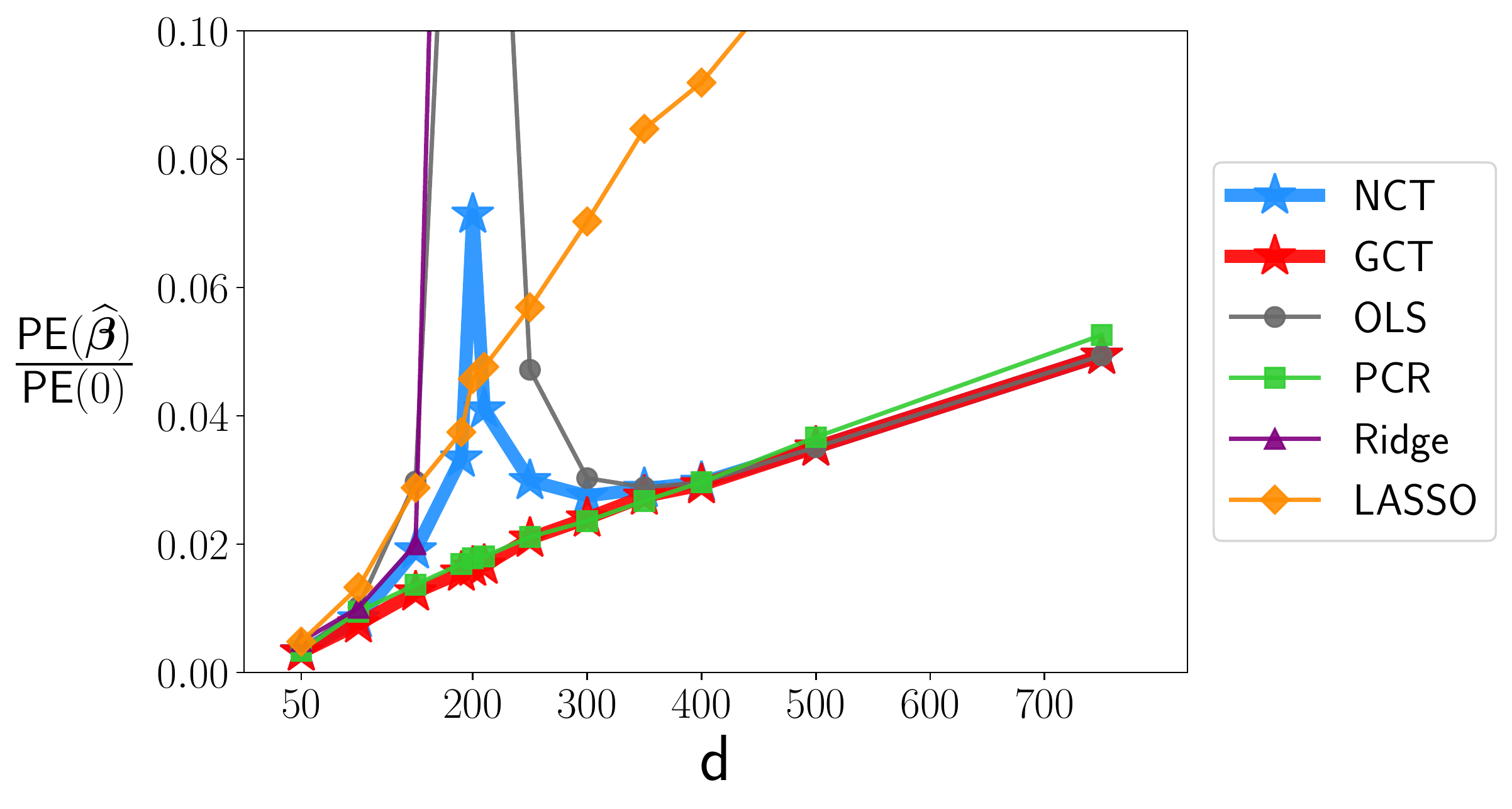}
         \vspace{-0.7cm}
         \caption{$a = 0.5$, $\,b = 1$;}
     \end{subfigure}
        \caption{The relative errors $\,\mathsf{MSE}(\ebeta)/\mathsf{MSE}(0)\,$ (left) and $\,\mathsf{PE}(\ebeta)/\mathsf{PE}(0)\,$ (right) for different estimators with $\,n=200$, $\,\snr=10$. Polynomial decay of eigenvalues and coefficients in eigenbasis: $\,\lambda_j = j^{-a}$, $\,\bu_j^\T \bbeta = j^{-b}$. }
        \label{plots1}
\end{figure}

\begin{figure}
     \begin{subfigure}{\textwidth}
         \includegraphics[width=0.49\textwidth]{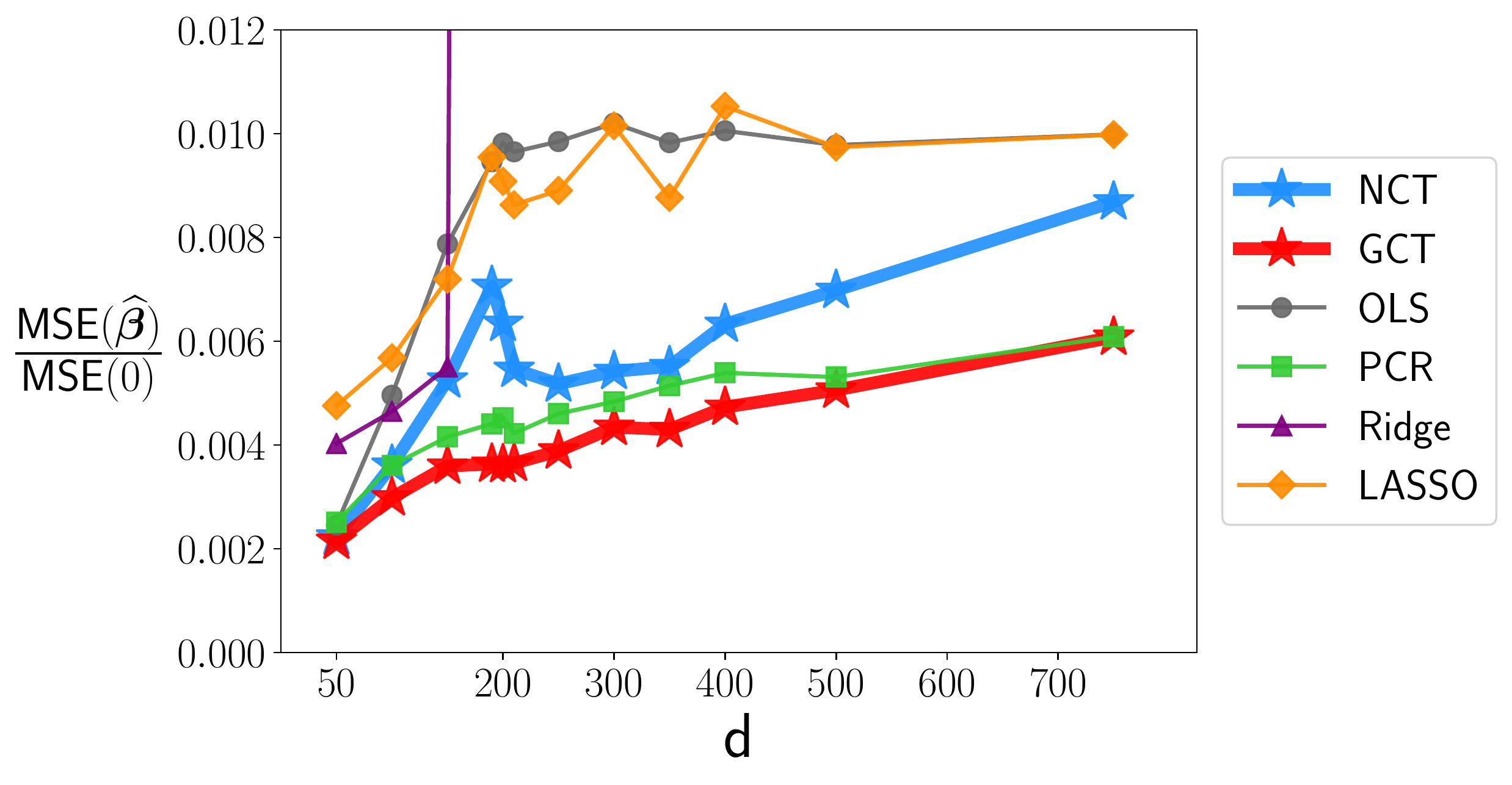}
         \hfill\hspace{0.3cm}
         \includegraphics[width=0.49\textwidth]{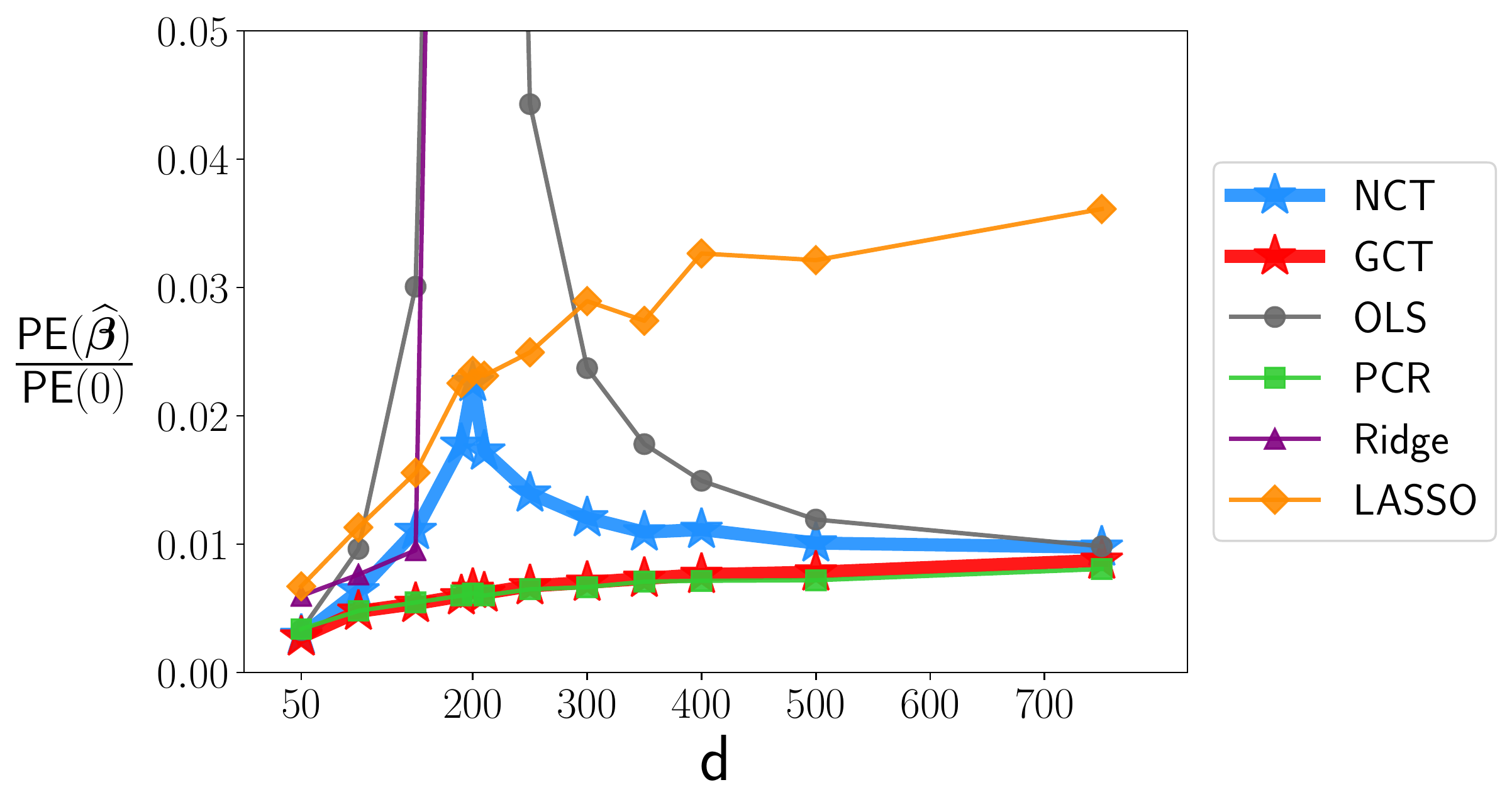}
         \vspace{-0.7cm}
         \caption{$a = 1$, $\,\bu_j^\T \bbeta = 1$ for $\,j=1,\ldots, 10\,$ and $0$ otherwise;}
     \end{subfigure}
     \vfill
     \vspace{1cm}
     \begin{subfigure}{\textwidth}
         \includegraphics[width=0.49\textwidth]{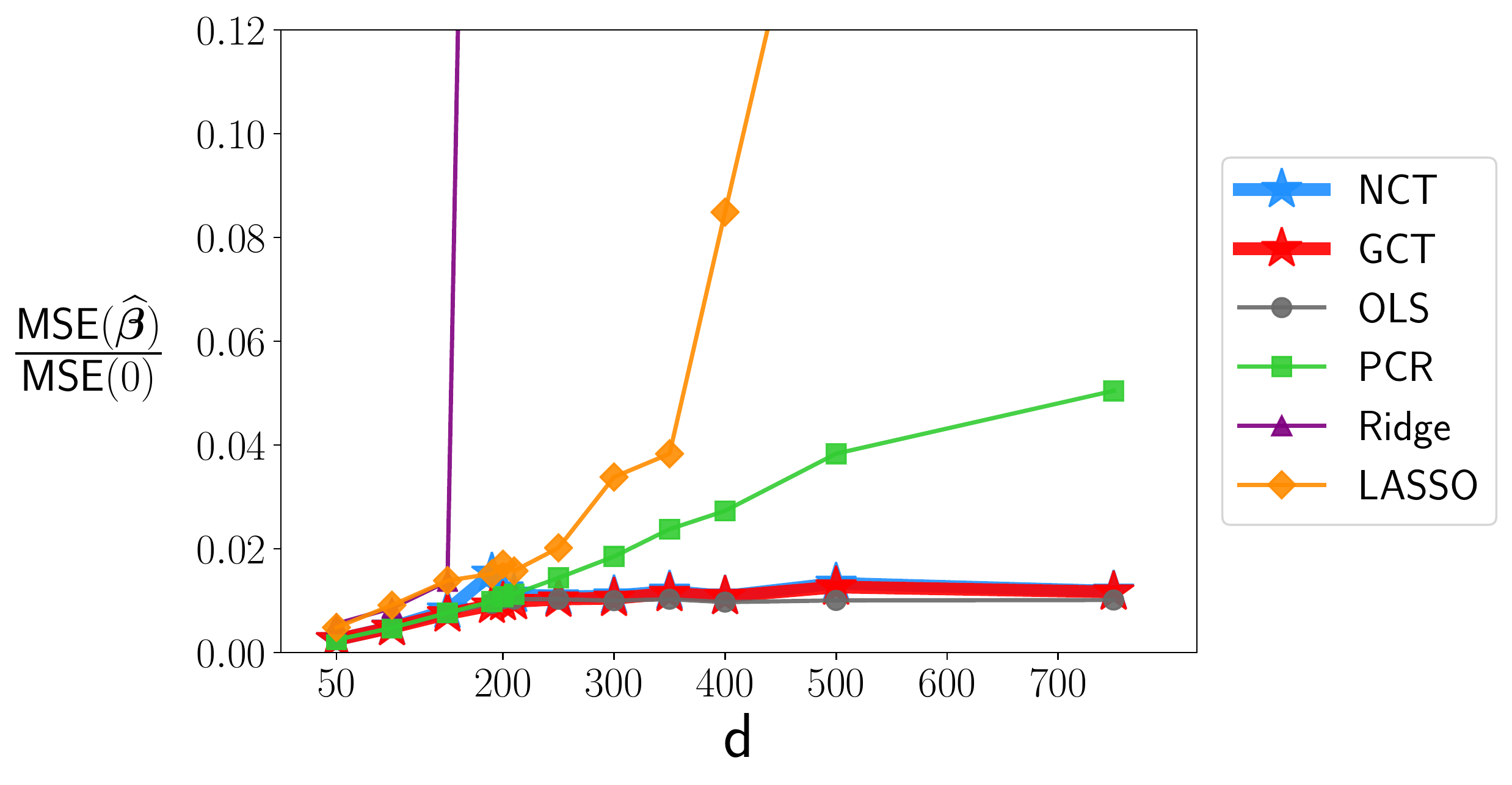}
         \hfill\hspace{0.3cm}
         \includegraphics[width=0.49\textwidth]{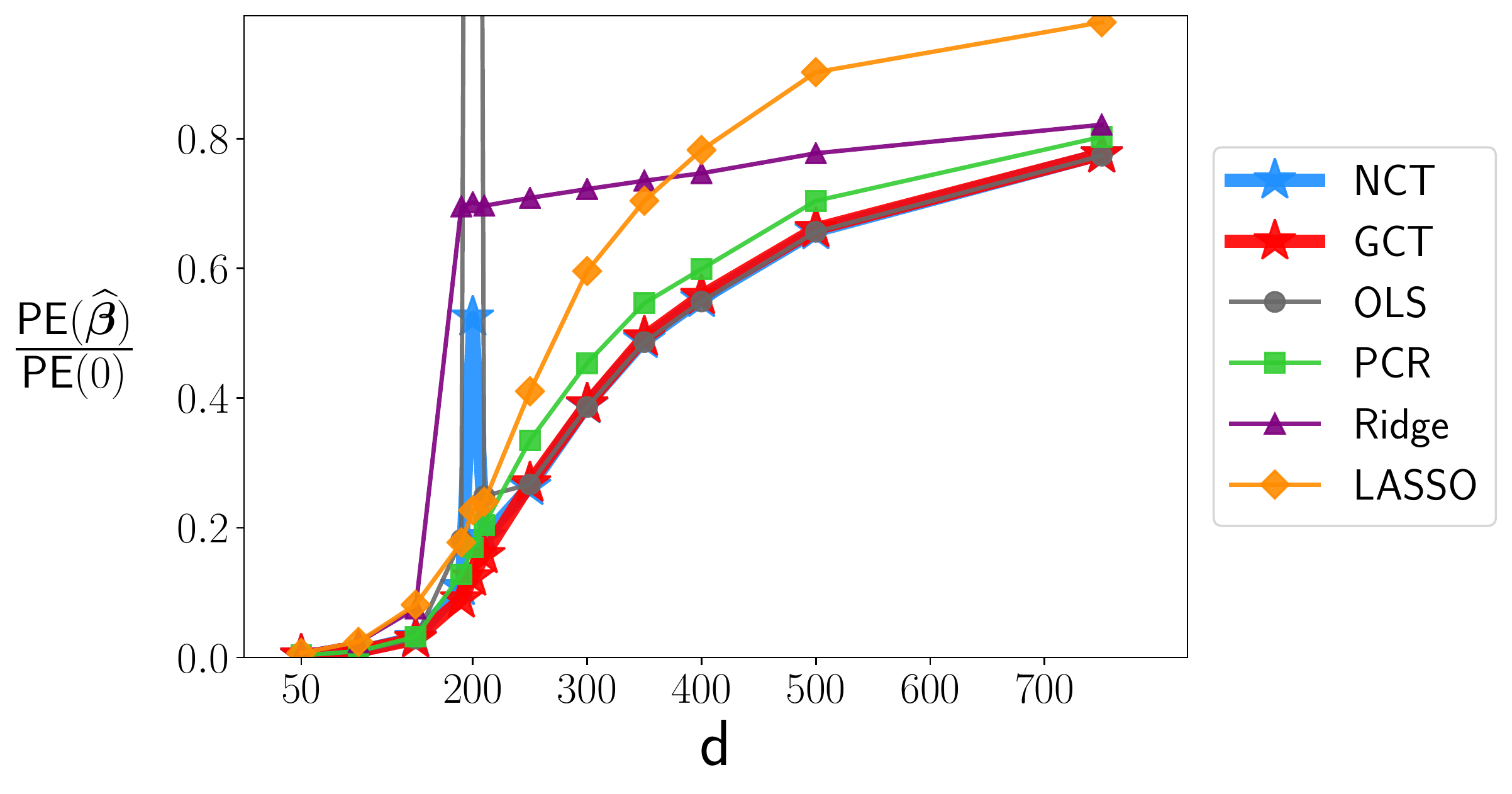}
         \vspace{-0.7cm}
         \caption{$a = 0.1$, $\,\bu_j^\T \bbeta = 1\,$ for 10 randomly chosen $\,j\in\{ d-25,\ldots, d\}$, and the rest components are i.i.d. $\,\mathcal{N}(0, d^{-1})$;}
     \end{subfigure}
     \vfill
     \vspace{1cm}
     \begin{subfigure}{\textwidth}
         \includegraphics[width=0.49\textwidth]{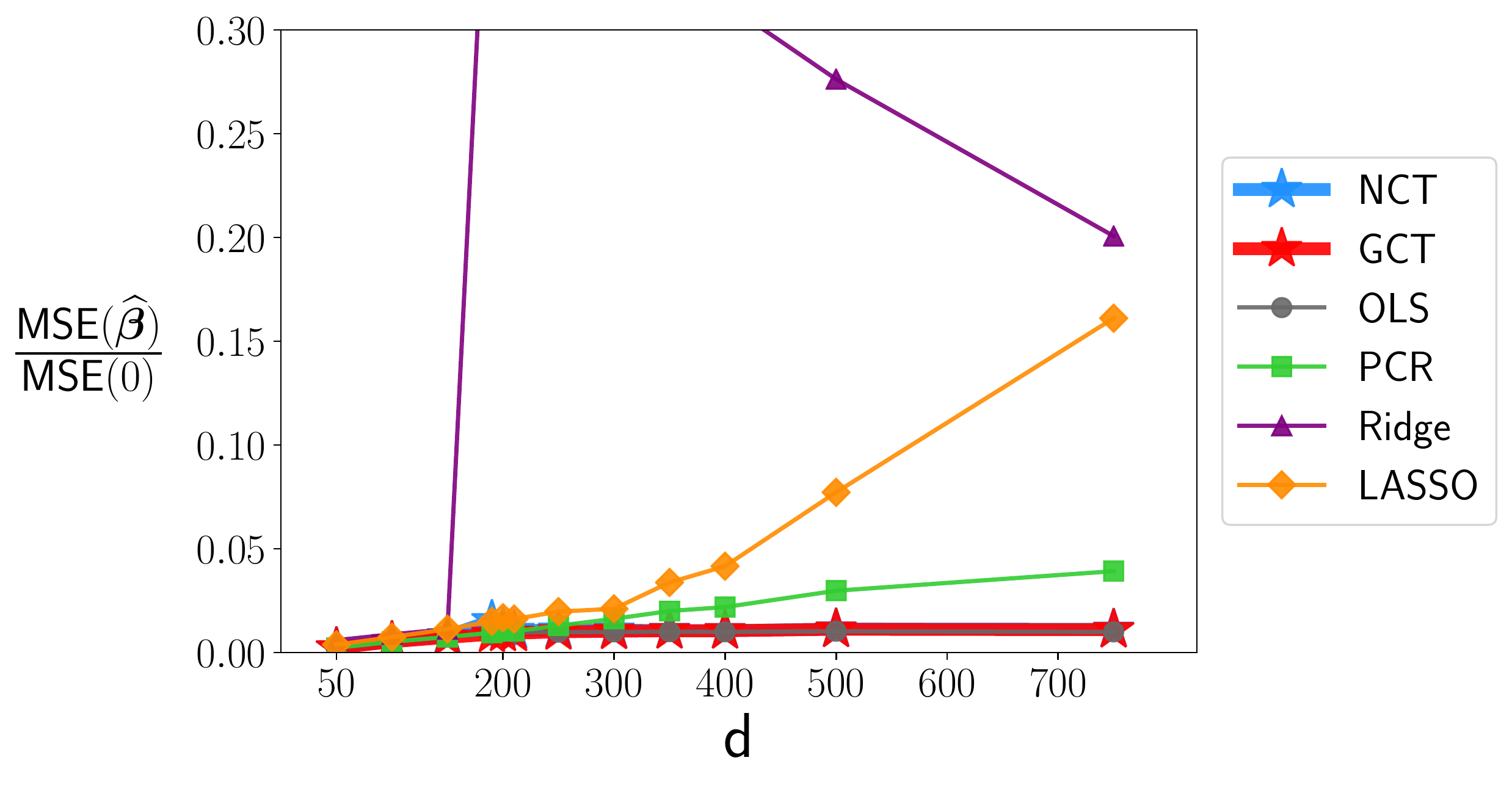}
         \hfill \hspace{0.3cm}
         \includegraphics[width=0.49\textwidth]{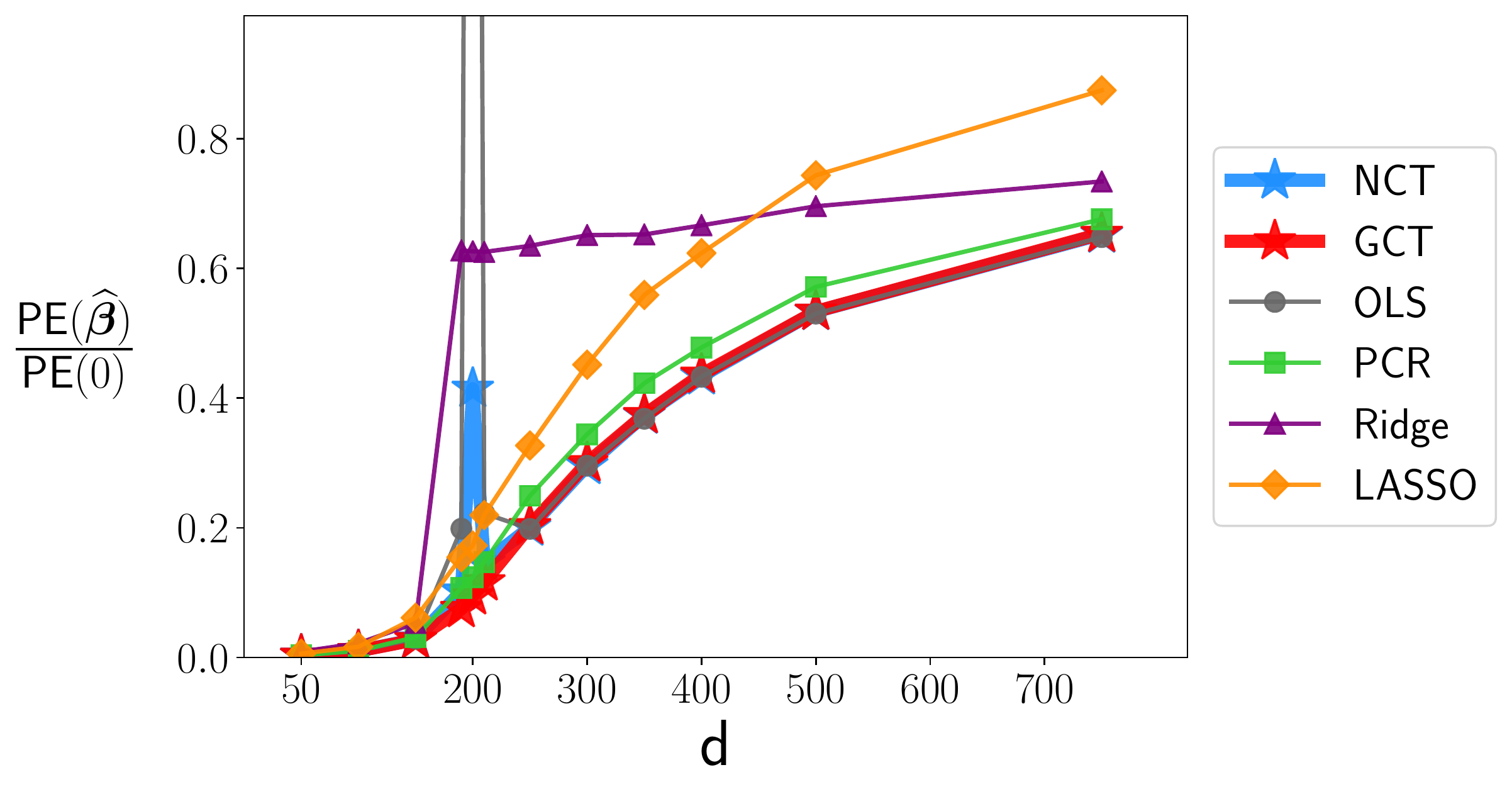}
         \vspace{-0.7cm}
         \caption{$a = 0.5$, $\,\bU^\T \bbeta \sim \mathcal{N}(0, \Id_d)$;}
     \end{subfigure}
        \caption{The relative errors $\,\mathsf{MSE}(\ebeta)/\mathsf{MSE}(0)\,$ (left) and $\,\mathsf{PE}(\ebeta)/\mathsf{PE}(0)\,$ (right) for different estimators with $\,n=200$, $\,\snr=10$. Polynomial decay of eigenvalues $\,\lambda_j = j^{-a}\,$ and different regimes of coefficients in eigenbasis $\,\bu_j^\T \bbeta$.}
        \label{plots2}
\end{figure}

\begin{figure}
     \begin{subfigure}{\textwidth}
         \includegraphics[width=0.49\textwidth]{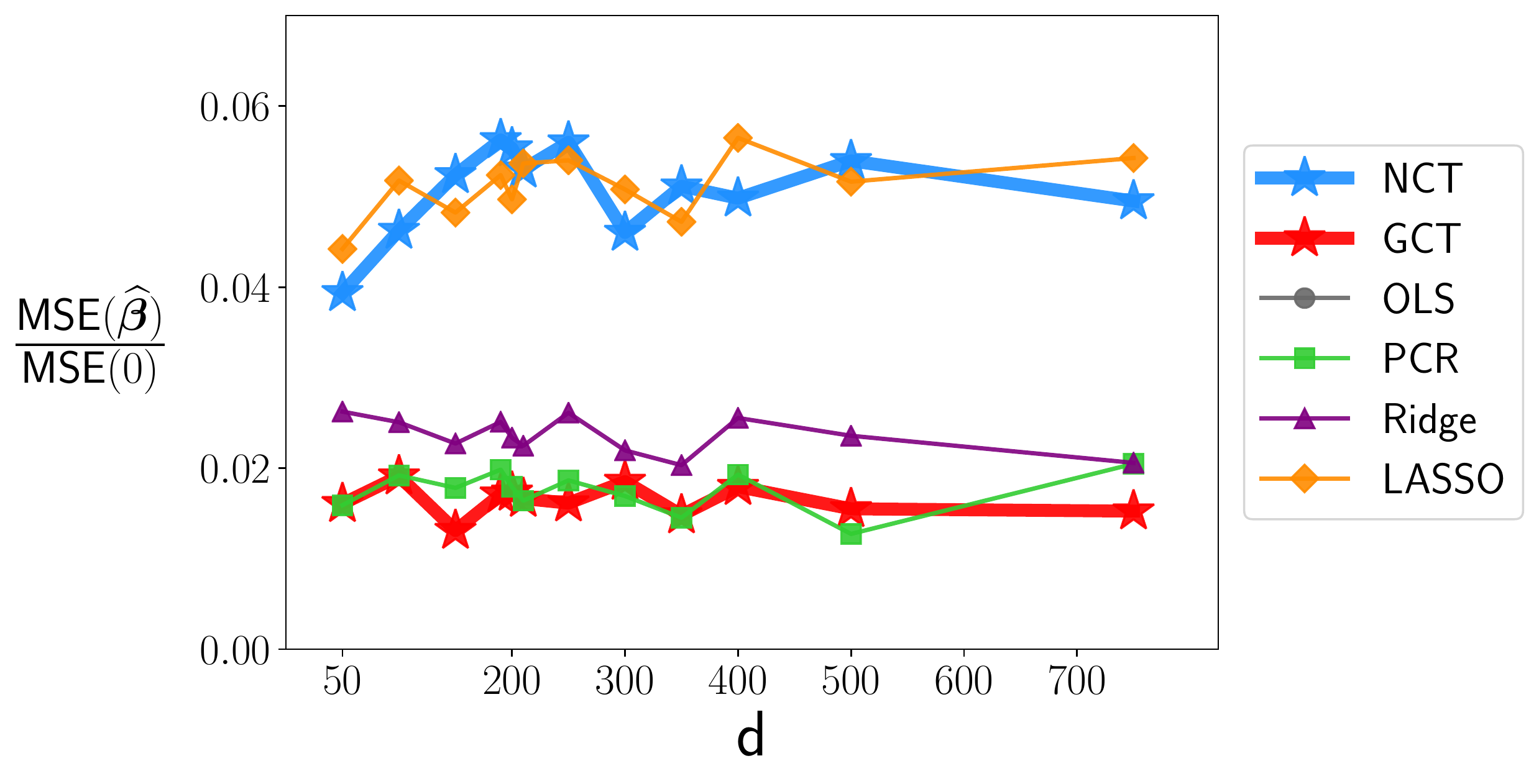}
         \hfill\hspace{0.3cm}
         \includegraphics[width=0.49\textwidth]{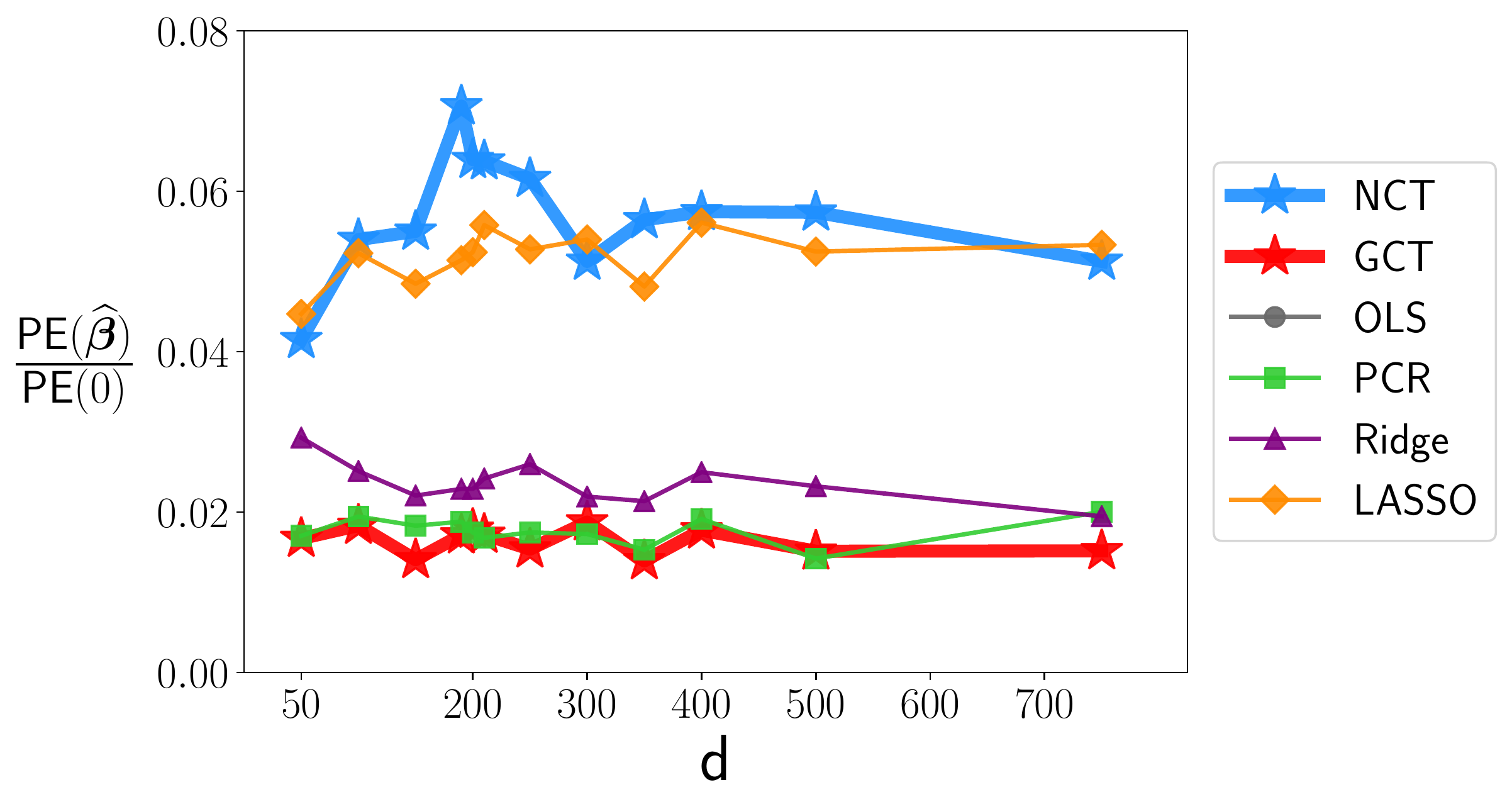}
         \vspace{-0.5cm}
         \caption{$a = 2$, $\,b = 2$;}
     \end{subfigure}
     \vfill
     \vspace{1cm}
     \begin{subfigure}{\textwidth}
         \includegraphics[width=0.49\textwidth]{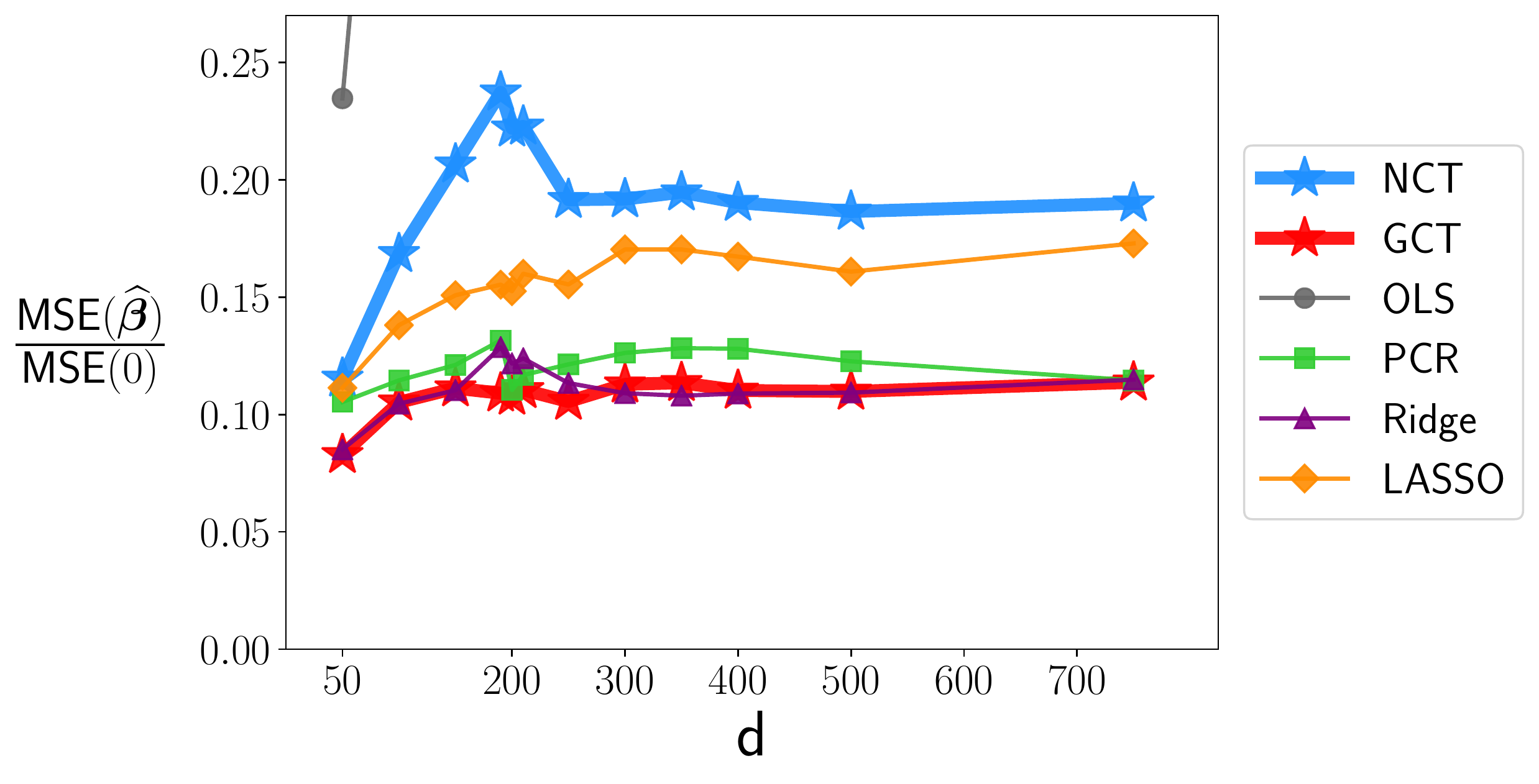}
         \hfill\hspace{0.3cm}
         \includegraphics[width=0.49\textwidth]{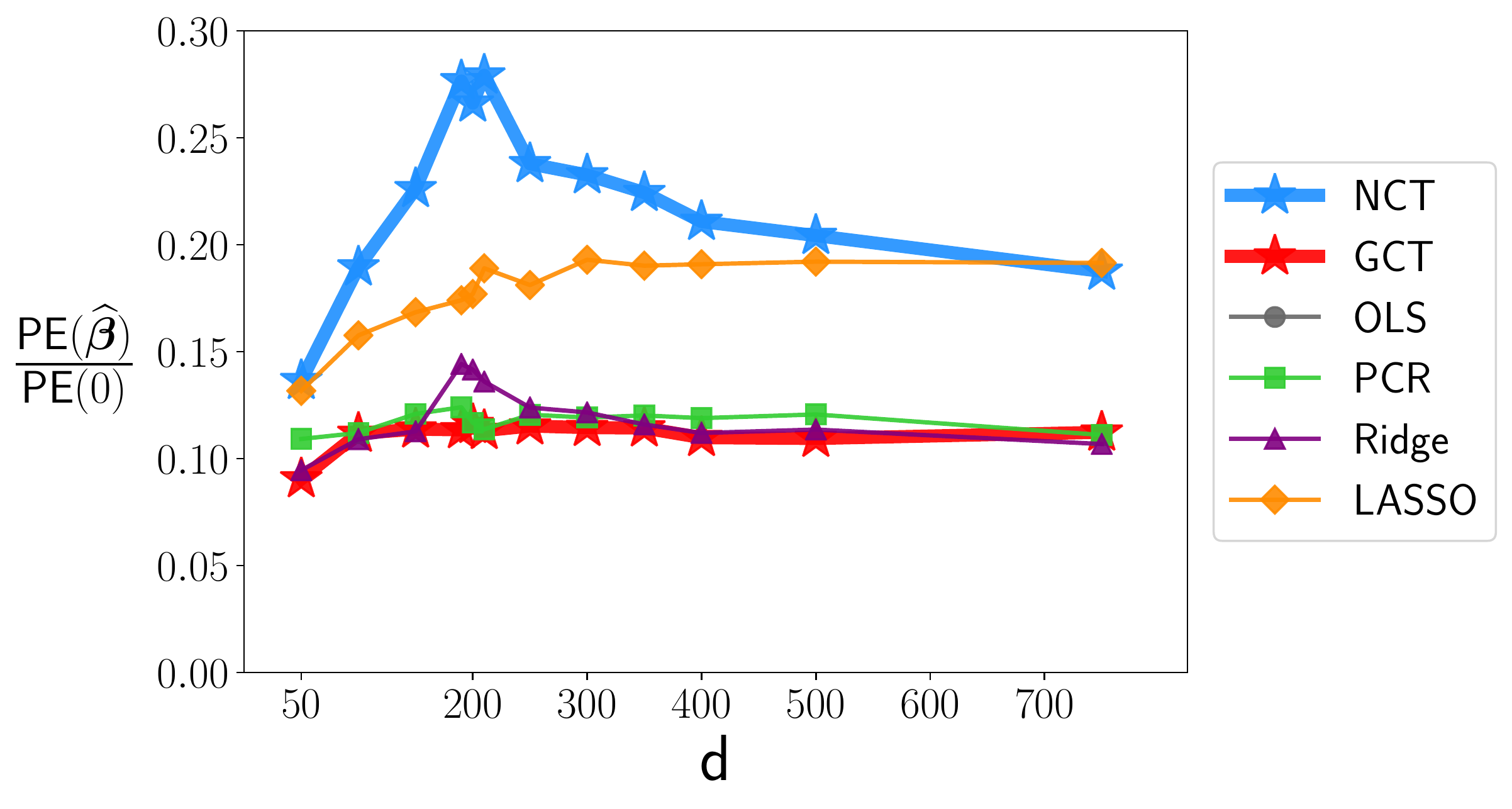}
         \vspace{-0.5cm}
         \caption{$a = 1$, $\,b = 0.5$;}
     \end{subfigure}
     \vfill
     \vspace{1cm}
     \begin{subfigure}{\textwidth}
         \includegraphics[width=0.49\textwidth]{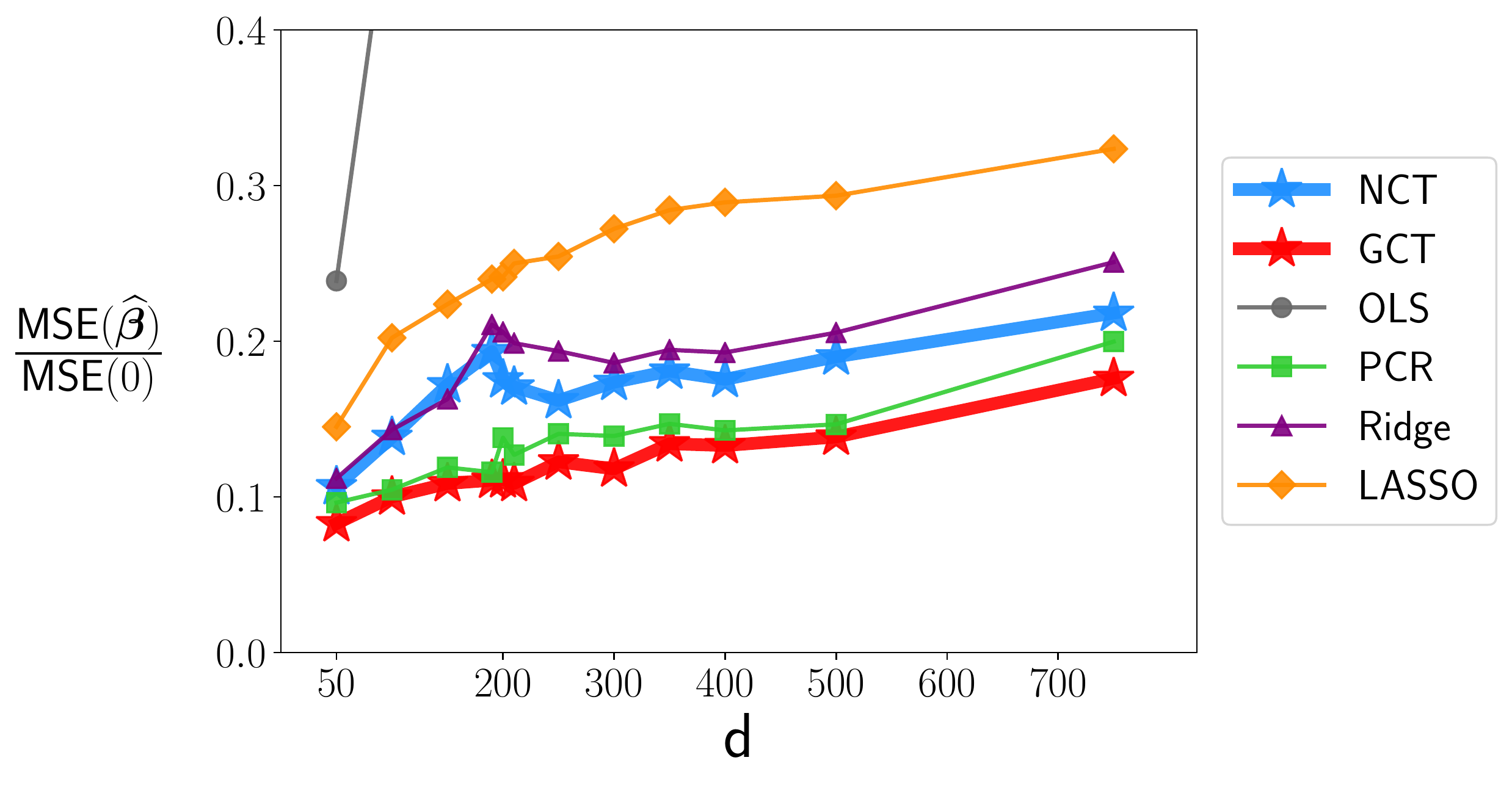}
         \hfill\hspace{0.3cm}
         \includegraphics[width=0.49\textwidth]{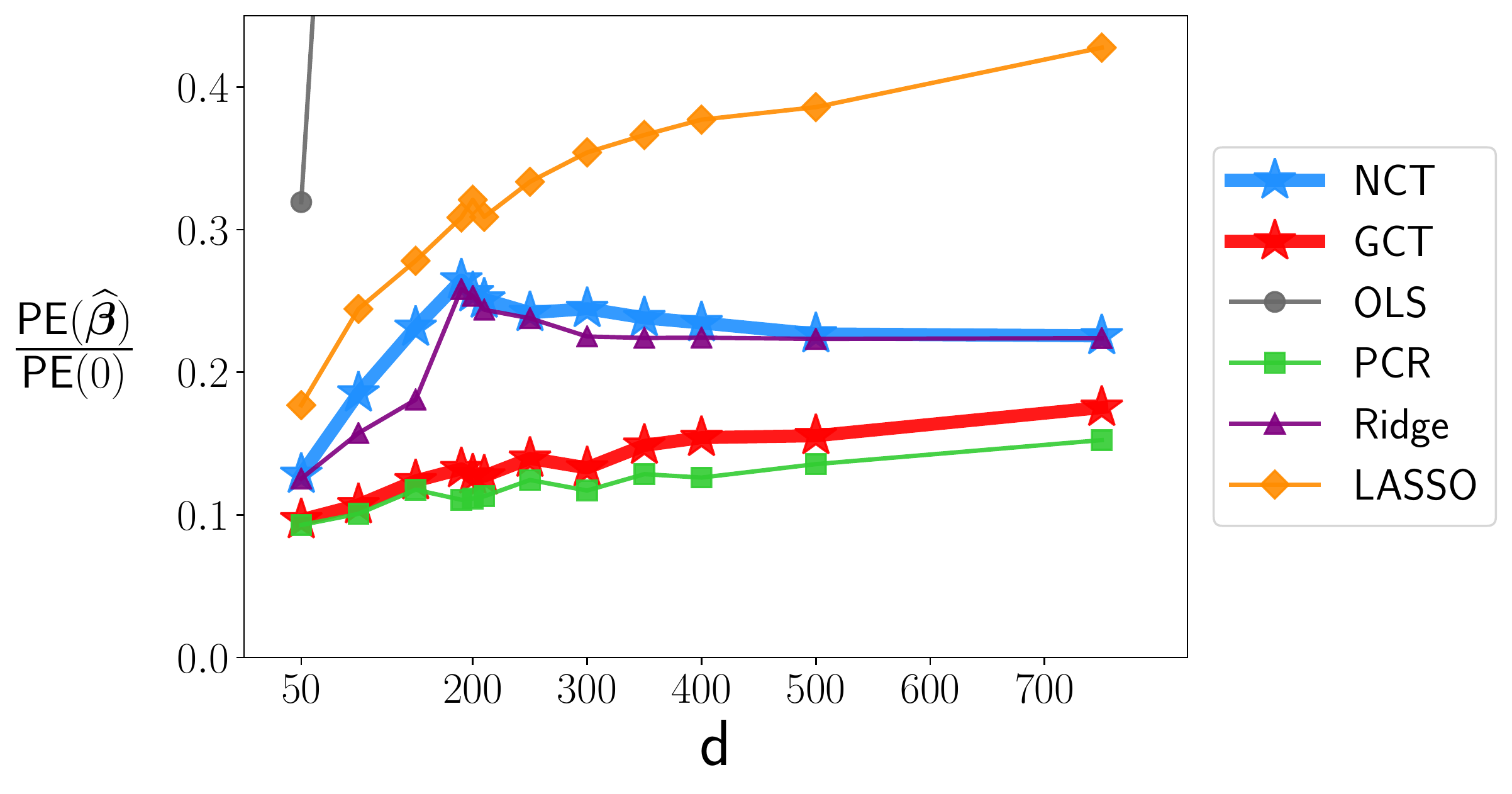}
         \vspace{-0.5cm}
         \caption{$a = 0.5$, $\,b = 1$;}
     \end{subfigure}
        \caption{The relative errors $\,\mathsf{MSE}(\ebeta)/\mathsf{MSE}(0)\,$ (left) and $\,\mathsf{PE}(\ebeta)/\mathsf{PE}(0)\,$ (right) for different estimators with $\,n=200$, $\,\snr=1$. Polynomial decay of eigenvalues and coefficients in eigenbasis: $\,\lambda_j = j^{-a}$, $\,\bu_j^\T \bbeta = j^{-b}$. }
        \label{plots3}
\end{figure}

\begin{figure}
     \begin{subfigure}{\textwidth}
         \includegraphics[width=0.49\textwidth]{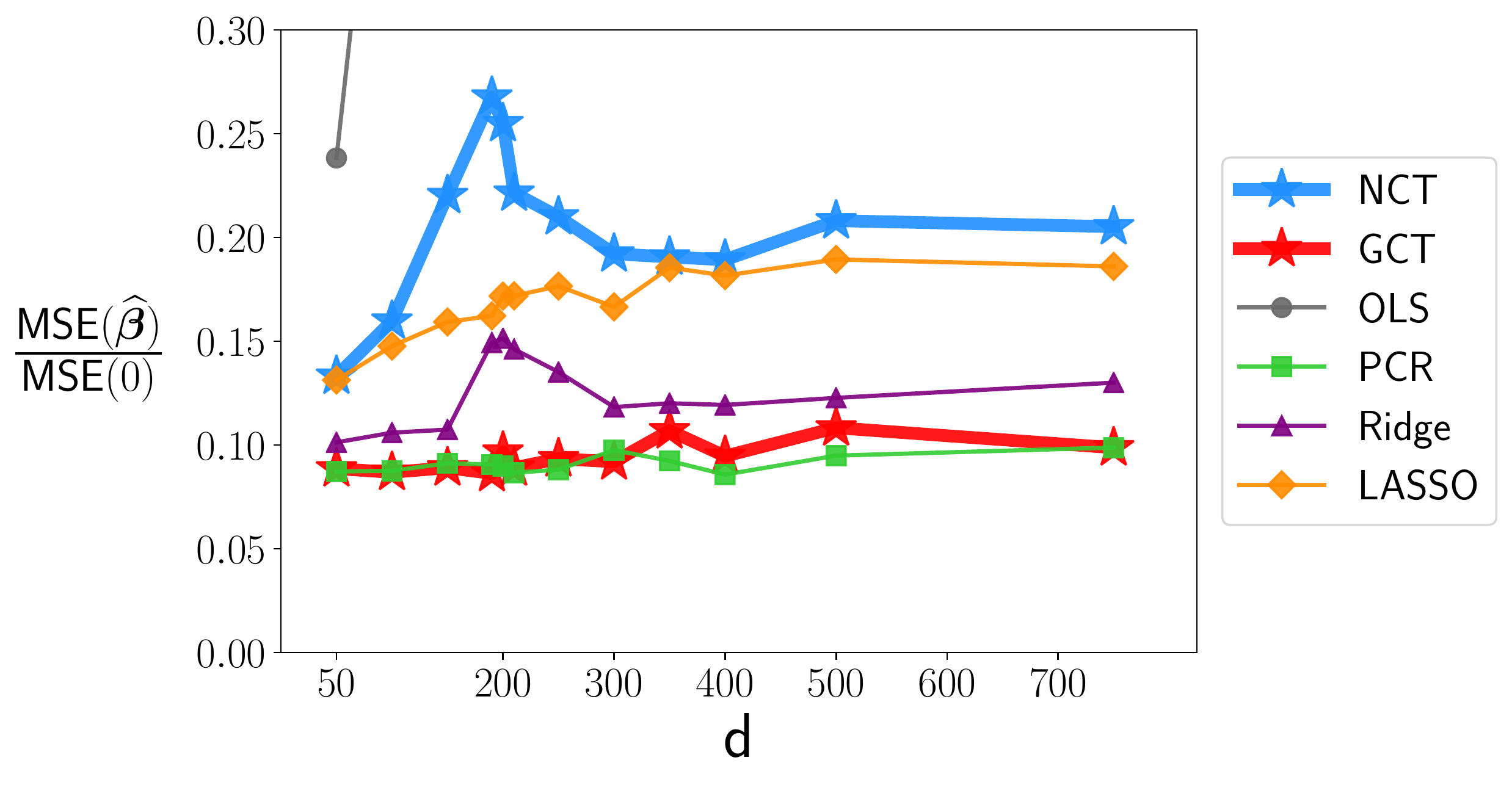}
         \hfill\hspace{0.3cm}
         \includegraphics[width=0.49\textwidth]{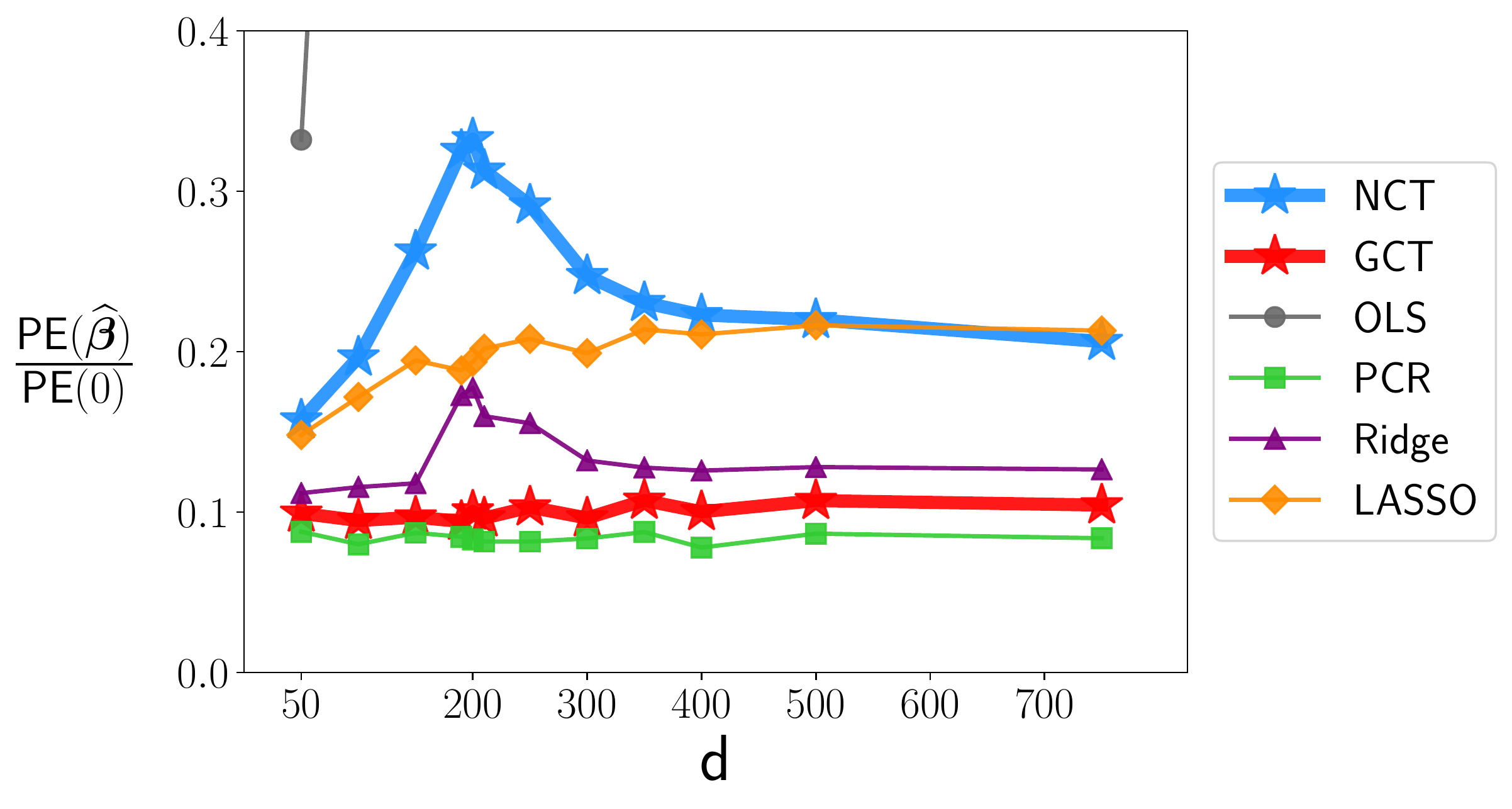}
         \vspace{-0.5cm}
         \caption{$a = 1$, $\,\bu_j^\T \bbeta = 1\,$ for $\,j=1,\ldots, 10\,$ and $0$ otherwise;}
     \end{subfigure}
     \vfill
     \vspace{1cm}
     \begin{subfigure}{\textwidth}
         \includegraphics[width=0.49\textwidth]{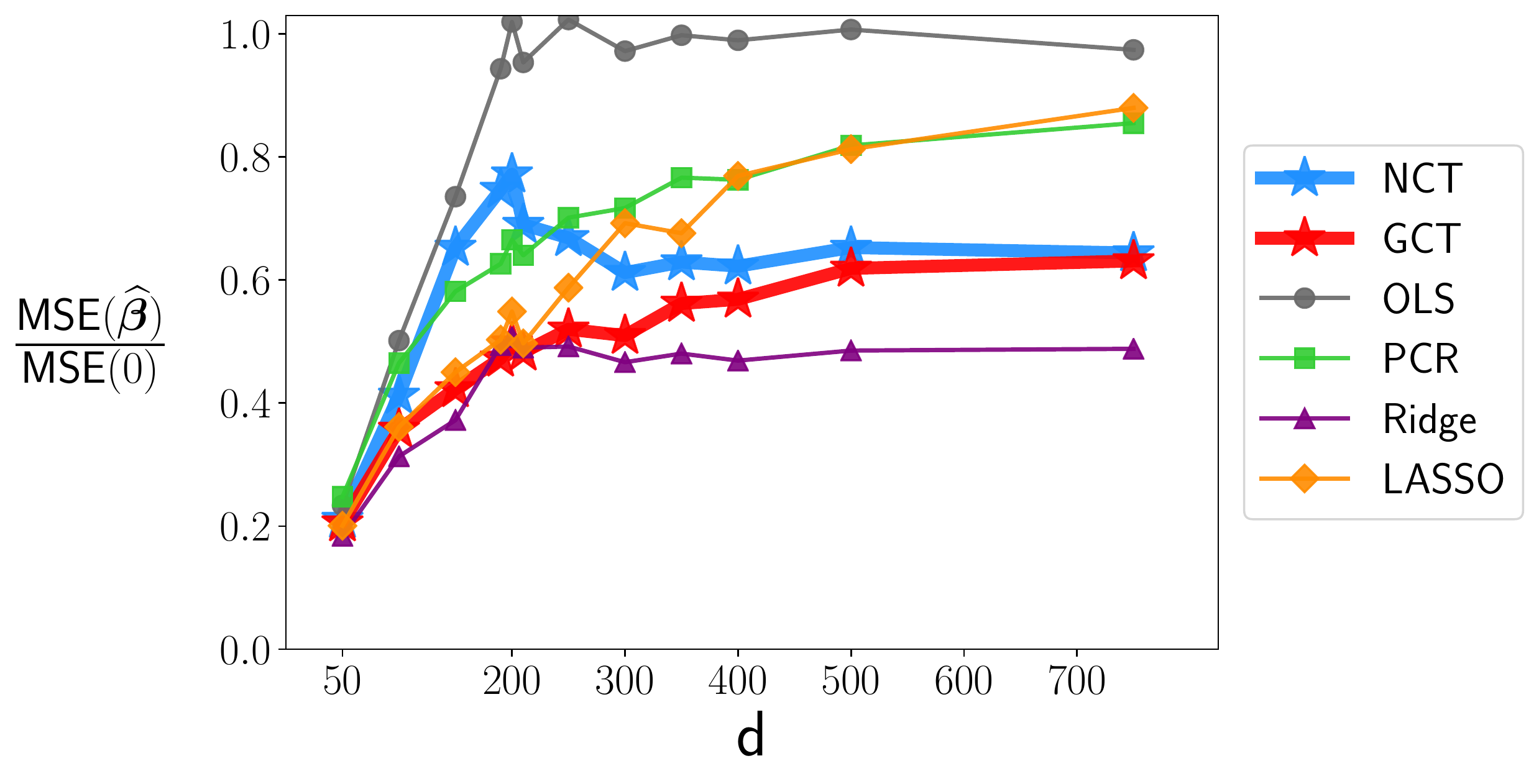}
         \hfill\hspace{0.3cm}
         \includegraphics[width=0.49\textwidth]{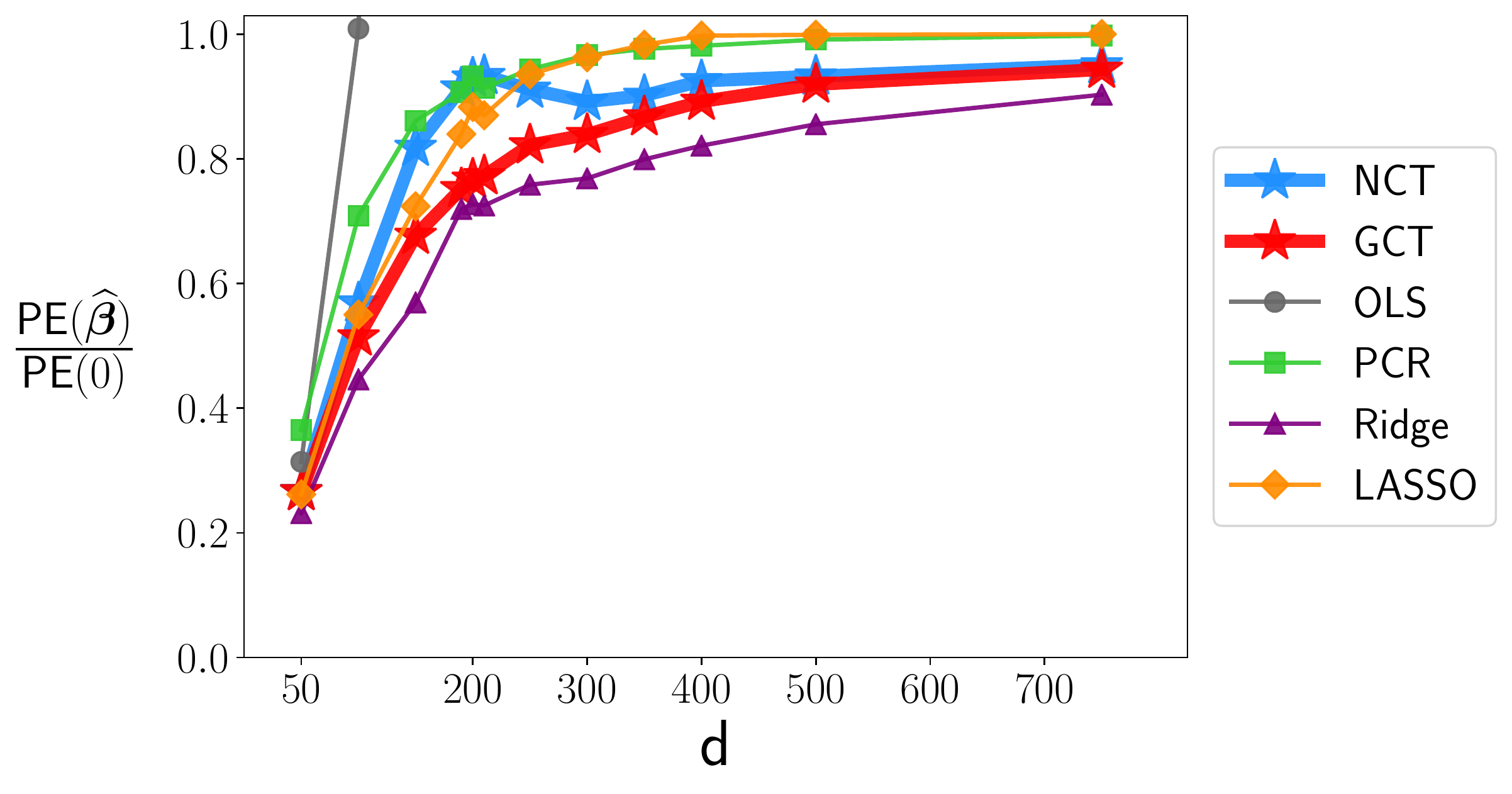}
         \vspace{-0.5cm}
         \caption{$a = 0.1$, $\,\bu_j^\T \bbeta = 1\,$ for 10 randomly chosen $\,j\in\{ d-25,\ldots, d\}$, and the rest components are i.i.d. $\,\mathcal{N}(0, d^{-1})$;}
     \end{subfigure}
     \vfill
     \vspace{1cm}
     \begin{subfigure}{\textwidth}
         \includegraphics[width=0.49\textwidth]{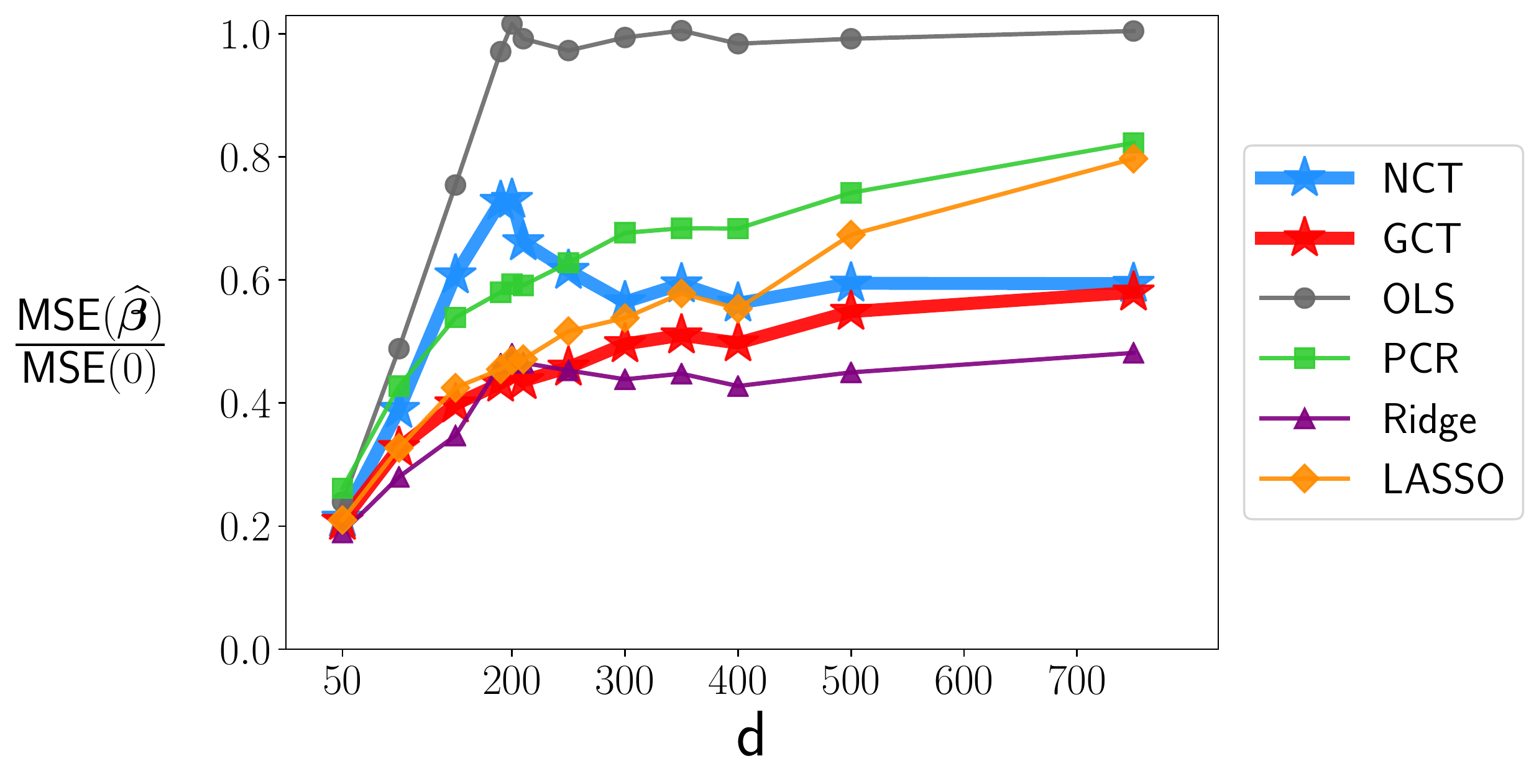}
         \hfill \hspace{0.3cm}
         \includegraphics[width=0.49\textwidth]{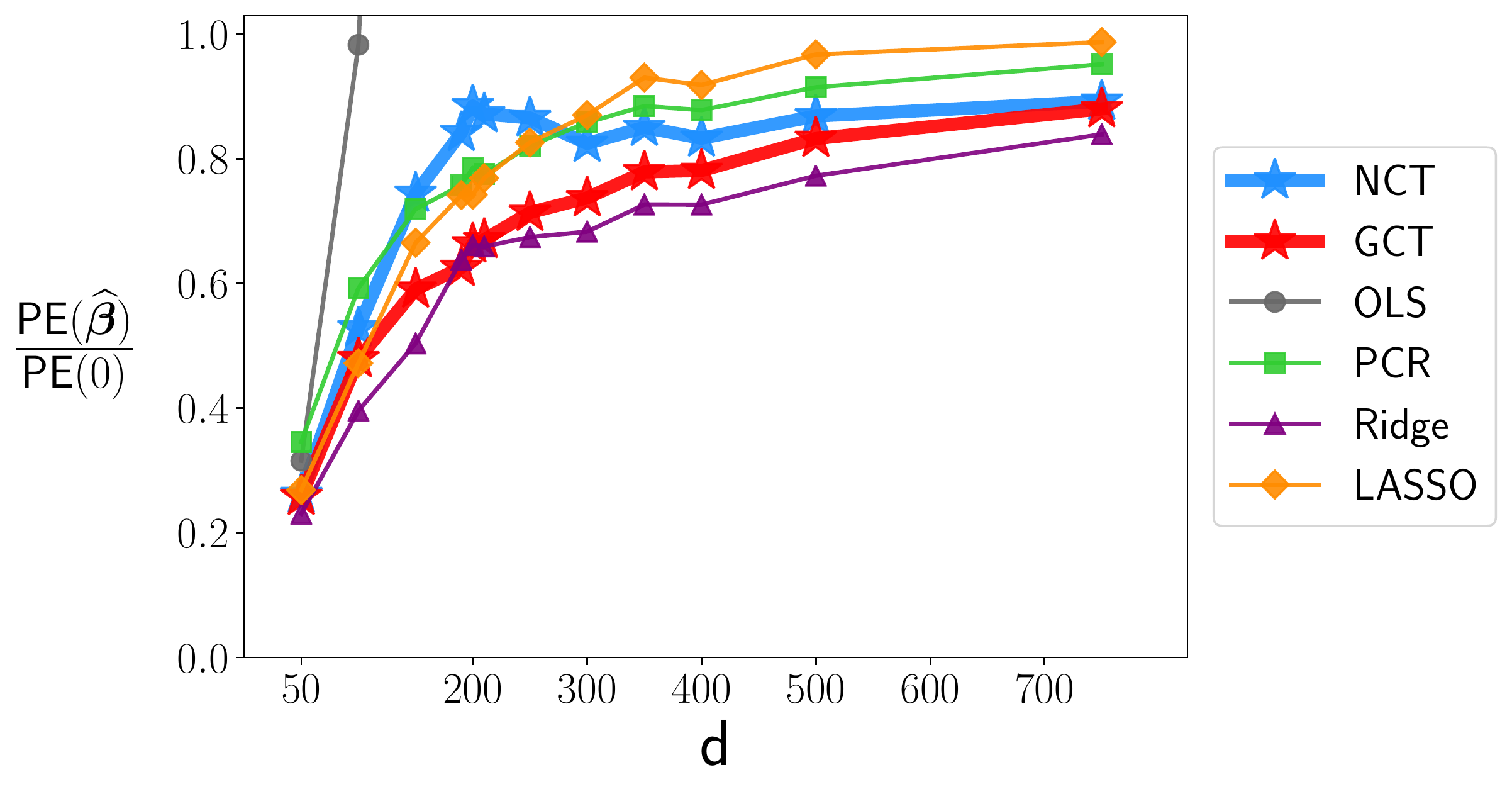}
         \vspace{-0.5cm}
         \caption{$a = 0.5$, $\,\bU^\T \bbeta \sim \mathcal{N}(0, \Id_d)$;}
     \end{subfigure}
        \caption{The relative errors $\,\mathsf{MSE}(\ebeta)/\mathsf{MSE}(0)\,$ (left) and $\,\mathsf{PE}(\ebeta)/\mathsf{PE}(0)\,$ (right) for different estimators with $\,n=200$, $\,\snr=1$. Polynomial decay of eigenvalues $\,\lambda_j = j^{-a}\,$ and different regimes of coefficients in eigenbasis $\,\bu_j^\T \bbeta$.}
        \label{plots4}
\end{figure}

We fix $\,n=200$, $\,\snr = 10\,$ or $\,\snr = 1$, and focus on how the relative errors $\,\mathsf{MSE}(\ebeta)/\mathsf{MSE}(0)\,$ and $\,\mathsf{PE}(\ebeta)/\mathsf{PE}(0)\,$ of these methods behave when the dimension $\,d\,$ grows.
The covariates $\,\bx_1,\ldots,\bx_n \sim \mathcal{N}(0, \St)$, where $\,\St\,$ depends on the eigenvalue scenario, and the noise vector $\,\beps \sim \mathcal{N}(0, \sigma^2\Id_n)\,$ (where $\sigma^2$ is chosen to ensure $\snr = 10$ or $\snr = 1$ for given $\,\St\,$ and $\,\bbeta$).
Without loss of generality we take $\,\St\,$ a diagonal matrix, or equivalently $\,\bU = \Id_d$.  The results are presented in Figure~\ref{plots1}--\ref{plots4}.
Figure~\ref{plots1} and Figure~\ref{plots2} correspond to $\,\snr = 10$, Figure~\ref{plots3} and Figure~\ref{plots4} correspond to $\,\snr = 1$.
Figure~\ref{plots1} and Figure~\ref{plots3} cover the scenarios of polynomial decay of the eigenvalues $\,\lambda_j = j^{-a}\,$ and  the coefficients $\,\bu_j^\T \bbeta = j^{-b}\,$ with
\begin{enumerate}[(a)]
	\item $a = 2$, $\,b = 2$;
	\item $a = 1$, $\,b = 0.5$;
	\item $a = 0.5$, $\,b = 1$;
\end{enumerate}
while Figure~\ref{plots2} and Figure~\ref{plots4} also consider polynomial decay of the eigenvalues $\,\lambda_j = j^{-a}\,$ but $\,\bU^\T \bbeta\,$ is different:
\begin{enumerate}[(a)]
	\item $a = 1$; $\,\bu_j^\T \bbeta = 1$ for $\,j=1,\ldots, 10\,$ and $0$ otherwise;
	\item $a = 0.1$; $\,\bu_j^\T \bbeta = 1\,$ for 10 randomly chosen $\,j\in\{ d-25,\ldots, d\}$, and the rest components are i.i.d. $\,\mathcal{N}(0, d^{-1})$;
	\item $a = 0.5$; $\,\bU^\T \bbeta \sim \mathcal{N}(0, \Id_d)$.
\end{enumerate}
 In each scenario, for each method and dimension we run the corresponding experiment 100 times and plot the median errors.

We notice that the NCT estimator (among some others) in some settings suffer around $\,d = n$. This is so called ``interpolation threshold'' -- when the dimension exceeds the number of data points, a model has enough features to interpolate training points. The behavior around this point and the associated ``double descent'' phenomenon has been an active area of research for the last couple of years. We do not focus on this in our work.

Otherwise, from the plots it is clear that in the presented settings the proposed procedure performs quite good compared to the other methods. In particular, the persistent performance of GCT suggests the benefit of varying thresholding to better adapt to various scenarios with different priors.   However, it is worth mentioning that other methods also perform quite unexpectedly well in a variety of settings, though previous theoretical results for them do not predict such performance. This may engender an interest in more thorough study of classical linear regression methods in high-dimensional setting under different structural assumptions.

\section{Main proofs} \label{S:mainproofs}

We start with the following lemma that allows to bound the properly scaled noise vector in $\ell_\infty$-norm. The lemma simultaneously deals with both fixed and random design settings.
\begin{lemma} \label{L:noise}
	Suppose Assumption~\ref{A:Noise} is fulfilled. Let $\,\rho\,$ be as in \eqref{def:rho}. Define the event
	\begin{equation}
	\begin{aligned}
		\Omega_1 \eqdef \left\{ \left\| \bxi \right\|_{\infty} \leq \frac{\sigma\rho}{2}\right\} \;\;\text{ with }\;\; \bxi \eqdef  \frac{\ZZ^\T\beps}{n} = \frac{\eL^{-1} \eU^\T \XX^\T \beps}{n} .
	\nonumber
	\end{aligned}
	\end{equation}
	Then
	\begin{equation}
	\begin{aligned}
		\Prob[\Omega_1] \geq 1-\delta.
	\nonumber
	\end{aligned}
	\end{equation}
\end{lemma}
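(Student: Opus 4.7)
The whole statement reduces to observing that the canonical design $\ZZ$ has orthonormal columns up to the scaling $\sqrt{n}$. From the SVD $n^{-1/2}\XX = \eV\eL\eU^\T$ we read off $\ZZ = \XX\eU\eL^{-1} = \sqrt{n}\,\eV$, so $n^{-1}\ZZ^\T\ZZ = \Id_r$. Writing $\ZZ = [\bz_1,\ldots,\bz_r]$ this forces $\|\bz_j\|_2 = \sqrt{n}$, hence the $j$-th component of $\bxi$ factors as
\[
\xi_j \,=\, \frac{\bz_j^\T\beps}{n} \,=\, \frac{1}{\sqrt{n}}\,\bw_j^\T\beps, \qquad \bw_j \eqdef \frac{\bz_j}{\sqrt{n}},\;\;\|\bw_j\|_2 = 1.
\]

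First I would condition on $\XX$, which makes every $\bw_j$ deterministic. Assumption~\ref{A:Noise} states that $\beps$ is independent of $\XX$, so conditionally $\beps$ remains sub-Weibull with the same parameters $(\alpha,\sigma)$ and the tail bound in the assumption applies to each linear statistic $\bw_j^\T\beps$. Choosing $t$ so that $(\sqrt{n}\,t/\sigma)^\alpha = \log(2r/\delta)$, equivalently $t = \sigma\rho/2$ with $\rho$ as in \eqref{def:rho}, gives
\[
\Prob\!\left[\,|\xi_j| \geq \sigma\rho/2 \,\big|\, \XX\,\right] \,\leq\, 2\exp\!\bigl(-\log(2r/\delta)\bigr) \,=\, \delta/r.
\]

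A union bound across $j \in [r]$ then yields $\Prob[\|\bxi\|_\infty \geq \sigma\rho/2 \mid \XX] \leq \delta$ uniformly in $\XX$, and integrating out $\XX$ produces the unconditional bound $\Prob[\Omega_1] \geq 1-\delta$, which is exactly the claim. There is essentially no obstacle: the only subtle point is the conditioning step, which is what lets a single argument cover both the fixed-design case (where $\XX$ is deterministic anyway) and the random-design case (where independence of $\XX$ and $\beps$ is needed to invoke the sub-Weibull tail conditionally).
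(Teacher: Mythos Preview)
Your proof is correct and follows essentially the same approach as the paper: condition on $\XX$, use the orthonormality $n^{-1}\ZZ^\T\ZZ=\Id_r$ to reduce each coordinate of $\bxi$ to $n^{-1/2}$ times a unit-vector linear form in $\beps$, apply the sub-Weibull tail from Assumption~\ref{A:Noise}, and finish with a union bound over the $r$ coordinates. The paper phrases the first step as an Orlicz-norm bound $\sup_{\|\bw\|_2=1}\|\bw^\T\bxi\|_{\psi_\alpha}\le\sigma/\sqrt{n}$ before specializing to the standard basis vectors, whereas you work coordinate-wise from the start, but the content is identical.
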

\subsection{Proof of  Theorem~\ref{Th1}}
Using $\,\Se = \eU\eL^2\eU^\T\,$ we write for $\,\ebeta\,$ from~\eqref{estimator}
\begin{equation}
            \begin{aligned}
            	\mathsf{MSE}(\ebeta) = (\ebeta-\bbeta)^\T \Se (\ebeta-\bbeta) = \| \eL \eU^\T \ebeta - \eL\eU^\T \bbeta\|_2^2.
                \nonumber
            \end{aligned}
        \end{equation}
Now we plug our estimator $\,\ebeta\,$ and $\,\YY = \XX\bbeta + \beps\,$ in to get
\begin{equation}
            \begin{aligned}
            	\mathsf{MSE}(\ebeta) &=
            	\left\| \mathsf{SOFT}_\tau\left[ \eL^{-1}\eU^\T \frac{\XX^\T \YY}{n} \right] - \eL\eU^\T \bbeta\right\|_2^2\\
            	&= \left\| \mathsf{SOFT}_\tau\left[ \eL\eU^\T\bbeta +  \frac{\eL^{-1}\eU^\T\XX^\T \beps}{n} \right] - \eL\eU^\T \bbeta\right\|_2^2
            	= \left\| \mathsf{SOFT}_\tau\left[ \btheta +  \bxi \right] - \btheta\right\|_2^2,
                \nonumber
            \end{aligned}
        \end{equation}
 where we recall the canonical coefficients $\,\btheta = \eL \eU^\T \bbeta\,$ from Definition~\ref{canonical} and $\,\bxi\,$ from Lemma~\ref{L:noise}. From now on, the proof basically repeats the classical derivation for the soft and hard thresholding. Let us analyze its $j$-th component on $\,\Omega_1\,$ from Lemma~\ref{L:noise} of probability at least $\,1-\delta$.

\begin{itemize}
\item If $\,|\theta_j+\xi_j| > \tau$, then $\,|\theta_j| \geq \tau - |\xi_j| \geq \tau/2\,$ and
         \begin{equation}
            \begin{aligned}
            	|\mathsf{SOFT}_\tau\left[ \theta_j +  \xi_j \right] - \theta_j | = |\theta_j +\xi_j \pm \tau - \theta_j| = |\xi_j \pm \tau| \leq |\xi_j| + \tau \leq \frac{3\tau}{2} \leq 3\min\left( \tau, |\theta_j|\right),
                \nonumber
            \end{aligned}
        \end{equation}
where $\pm$ means that we take either $+$ or $-$ depending on the sign of $\,(\theta_j + \xi_j)$, but this doesn't play any role. For the lower bound,
\begin{equation}
            \begin{aligned}
            	|\mathsf{SOFT}_\tau\left[ \theta_j +  \xi_j \right] - \theta_j | = |\xi_j \pm \tau| \geq \tau - |\xi_j| \geq \tau/2 \geq \frac{1}{2}\min\left( \tau, |\theta_j|\right).
                \nonumber
            \end{aligned}
        \end{equation}
 \item If $\,|\theta_j+\xi_j| \leq \tau$, then $\,|\theta_j| \leq \tau + |\xi_j| \leq 3\tau/2\,$ and
         \begin{equation}
            \begin{aligned}
            	|\mathsf{SOFT}_\tau\left[ \theta_j +  \xi_j \right] - \theta_j | = |0 -\theta_j| = |\theta_j| \leq 3\min\left( \tau, |\theta_j|\right).
                \nonumber
            \end{aligned}
        \end{equation}
        For the lower bound,
        \begin{equation}
            \begin{aligned}
            	|\mathsf{SOFT}_\tau\left[ \theta_j +  \xi_j \right] - \theta_j | = |\theta_j| \geq \frac{1}{2}\min\left( \tau, |\theta_j|\right).
                \nonumber
            \end{aligned}
        \end{equation}
       \end{itemize}
       Hence, on $\,\Omega_1$
       \begin{equation}
            \begin{aligned}
            	\frac{1}{4}\sum\limits_{j=1}^r \min\left( \tau, |\theta_j|\right)^2 \leq \left\| \mathsf{SOFT}_\tau\left[ \btheta +  \bxi \right] - \btheta\right\|_2^2
            	\leq 9\sum\limits_{j=1}^r \min\left( \tau, |\theta_j|\right)^2.
                \nonumber
            \end{aligned}
        \end{equation}
Continuing the upper bound, note that for any $\,0 \leq q \leq 2\,$ we have $\,\min\left( \tau, |\theta_j|\right) \leq \tau^{1-q/2}\,|\theta_j|^{q/2}\,$ (we use convention $\,0^0 = 0$).
Thus, 
\begin{equation}
            \begin{aligned}
            	\sum\limits_{j=1}^r \min\left( \tau, |\theta_j|\right)^2
            	\leq \tau^{2-q} \sum\limits_{j=1}^r |\theta_j|^q =
            	\tau^{2-q} \| \btheta \|_q^q,
                \nonumber
            \end{aligned}
        \end{equation}
using convention $\,\| \cdot\|_0^0 = \| \cdot\|_0$. Taking infimum over $\,q\in [0, 2]$, extracting $\,\bbeta^\T\Se\bbeta = \mathsf{MSE}(0)\,$ and recalling the definitions of $\,\snr\,$ and $\,\eff_{q,d}(\Se,\bbeta)$,  we conclude the proof.

\subsection{Proof of Theorem~\ref{Th2}}
To begin with, we state the following well-known result on the concentration of the sample covariance around the true covariance in terms of the effective rank.
See \cite{Koltchinskii_CIAMBFSCO}, Theorem 9; also, \cite{Vershynin}, Theorem 9.2.4 and Exercise 9.2.5.
\begin{lemma} \label{L:covconc}
Suppose Assumption~\ref{A:X} is fulfilled. Then, with probability $1-\delta$
	\begin{equation}
            \begin{aligned}
            	\| \Se - \St \| \leq C\| \St\| \left( \sqrt{\frac{\reff[\St] + \log(1/\delta)}{n}} + \frac{\reff[\St] + \log(1/\delta)}{n} \right).
                \nonumber
            \end{aligned}
        \end{equation}
\end{lemma}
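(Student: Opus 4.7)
The plan is to reduce the operator-norm deviation to a quadratic empirical process over an ellipsoid, then control that process with a Bernstein/Hanson--Wright-type inequality and a dimension-free chaining (or effective-rank) argument. Concretely, I would start from the variational identity
\begin{equation*}
\|\Se-\St\| \;=\; \sup_{\bv\in S^{d-1}}\bigl|\bv^{\T}(\Se-\St)\bv\bigr| \;=\; \sup_{\bv\in S^{d-1}}\Bigl| n^{-1}\sum_{i=1}^n\bigl[(\bv^{\T}\bx_i)^2 - \bv^{\T}\St\bv\bigr]\Bigr|,
\end{equation*}
and whiten by letting $\bz_i\eqdef \St^{-1/2}\bx_i$, which are i.i.d., mean zero, isotropic, and sub-Gaussian by Assumption~\ref{A:X}. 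Substituting $\bu\eqdef \St^{1/2}\bv$ transforms the problem into a supremum of $X_\bu\eqdef n^{-1}\sum_i[(\bu^{\T}\bz_i)^2-\|\bu\|_2^2]$ indexed over the ellipsoid $\mathcal{E}\eqdef \St^{1/2} S^{d-1}$. The ambient dimension $d$ then enters only implicitly through the geometry of $\mathcal{E}$, and the extremal radius of $\mathcal{E}$ is $\|\St\|^{1/2}$.

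\textbf{Pointwise control.} For a fixed $\bu\in\mathcal{E}$, the random variable $(\bu^{\T}\bz_i)^2-\|\bu\|_2^2$ is centered and sub-exponential with $\psi_1$-norm of order $\|\bu\|_2^2$. The one-dimensional Bernstein inequality (equivalently, Hanson--Wright applied to the quadratic form) gives
\begin{equation*}
\Prob\bigl[|X_\bu|\geq t\bigr] \;\leq\; 2\exp\Bigl(-c\,\min\Bigl\{\tfrac{nt^2}{\|\bu\|_2^4},\,\tfrac{nt}{\|\bu\|_2^2}\Bigr\}\Bigr),
\end{equation*}
which already exhibits the sub-Gaussian/sub-exponential split that gives rise to the two terms in the desired bound (the $\sqrt{\cdot}$ part and the linear part). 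Since $\|\bu\|_2\leq\|\St\|^{1/2}$ on $\mathcal{E}$, the worst-case variance proxy at a single point is $\|\St\|^2$, but the effective rank will appear only once we uniformize over $\mathcal{E}$.

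\textbf{Uniformization.} Here I would use either Minsker's dimension-free matrix Bernstein inequality directly on $\bx_i\bx_i^{\T}-\St$, or a generic-chaining bound for the process $\{X_\bu\}_{\bu\in\mathcal{E}}$. In the matrix-Bernstein route, after a mild truncation at level $\|\St\|\reff[\St]$, the key quantity is the matrix variance
\begin{equation*}
\sigma_{\mathrm{mat}}^2 \;=\; \bigl\|\E\bigl[(\bx\bx^{\T}-\St)^2\bigr]\bigr\| \;\leq\; \bigl\|\E[\bx\bx^{\T}\bx\bx^{\T}]\bigr\| \;\lesssim\; \|\St\|\,\Tr[\St] \;=\; \|\St\|^2 \reff[\St],
\end{equation*}
where the penultimate inequality uses sub-Gaussianity of $\bx$ to bound $\E[(\bu^{\T}\bx)^4]\lesssim(\bu^{\T}\St\bu)^2$ and sum over an eigenbasis. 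Substituting into Bernstein produces $\sqrt{\sigma_{\mathrm{mat}}^2\,u/n}+(\|\St\|\reff[\St]/n)\,u$ with $u=\reff[\St]+\log(1/\delta)$, matching the claimed rate. In the chaining route, the relevant increment metric is the Bernstein-type mixed metric induced by $X_\bu-X_{\bu'}$, and the Gaussian complexity of $\mathcal{E}$ equals $\E\|\St^{1/2}\bg\|_2\asymp\sqrt{\Tr[\St]}=\sqrt{\|\St\|\reff[\St]}$, so that dividing by $\sqrt{n}$ and incorporating Talagrand concentration produces the same bound.

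\textbf{Main obstacle.} The non-routine step is obtaining the \emph{dimension-free} logarithm $\reff[\St]$ rather than $\log d$. Plain matrix Bernstein would leave a $\log d$ factor, which is unacceptable when $d\gg n$ even though $\reff[\St]$ remains bounded. Resolving this either requires Minsker's refinement of matrix Bernstein that replaces $\log d$ by a functional of the intrinsic dimension of the variance matrix, or a Dudley/generic-chaining computation on $\mathcal{E}$ handling the mixed sub-Gaussian/sub-exponential increments. This is the technical core; fortunately, both routes are executed in \cite{Koltchinskii_CIAMBFSCO} and Chapter~9 of \cite{Vershynin}, so I would simply cite those results and specialize them to the present setting.
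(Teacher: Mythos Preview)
Your proposal is correct and ultimately converges on exactly the same approach as the paper: the paper does not prove this lemma at all but simply cites it as a well-known result from \cite{Koltchinskii_CIAMBFSCO}, Theorem~9, and \cite{Vershynin}, Theorem~9.2.4 and Exercise~9.2.5. Your detailed outline of the whitening, pointwise Bernstein, and dimension-free uniformization is a faithful sketch of what those references contain, and you yourself close by deferring to them, so there is no substantive difference.
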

\noindent Using Assumption~\ref{A:tech} we can leave only the first term in the bound above. Let $\Omega_2$ be the event on which this bound holds.

Our main tools to prove the main result is the beautiful work by \cite{Wahl} that develops tight \textit{relative perturbation bounds for eigenvalues and eigenvectors} of covariance matrix. Let us describe the framework of that paper. By Assumption~\ref{A:cvxdecay} we consider the case of simple eigenvalues of $\St$. The following quantities play important role: the \textit{relative rank}
	\begin{equation}
            \begin{aligned}
            	\br_j(\St) \eqdef \sum\limits_{\substack{l=1\\l\neq j}}^d \frac{\lambda_l}{|\lambda_j - \lambda_l|} + \frac{\lambda_j}{\min(\lambda_{j-1} - \lambda_j, \lambda_j - \lambda_{j+1})} \;\;\;\text{ for }j\in[d]
                \nonumber
            \end{aligned}
        \end{equation}
(here $\lambda_0 = +\infty$ and $\lambda_{d+1} = 0$ for convenience) and the entries of $\St^{-1/2} (\Se-\St)\St^{-1/2}$
\begin{equation}
            \begin{aligned}
            	\overline{\eta}_{ll^\prime} \eqdef \frac{\bu_l^\T (\Se-\St) \bu_{l^\prime}}{\sqrt{\lambda_l\lambda_{l^\prime}}}\;\;\;\text{ for }l,l^\prime\in[d].
                \nonumber
            \end{aligned}
        \end{equation}
Relative perturbation bounds for $j$-th eigenvalue and eigenvector hold under the condition that there exist $x$ such that
 \begin{equation}
            \begin{aligned}
            	|\overline{\eta}_{ll^\prime}| \leq x\;\;\;\text{ for all }l,l^\prime\in[d], \;\;\text{ and }\;\;\br_j(\St) \leq \frac{1}{3x}.
                \nonumber
            \end{aligned}
        \end{equation}
The following lemma helps to control the first condition.
\begin{lemma} \label{L:psi_n}
	Suppose Assumption~\ref{A:X} and Assumption~\ref{A:tech} hold. Then, with probability $1-\delta$
	 \begin{equation}
            \begin{aligned}
            	\max\limits_{l,l^\prime\in[d]} |\overline{\eta}_{ll^\prime}| \leq \epsilon,
                \nonumber
            \end{aligned}
        \end{equation}
        where
         \begin{equation}
            \begin{aligned}
            	\epsilon = \epsilon_{n,d,\delta} \eqdef C \sqrt{\frac{\log(d/\delta)}{n}}. 
                \nonumber
            \end{aligned}
        \end{equation}
        for some $C$.
\end{lemma}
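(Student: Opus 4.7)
The plan is to rewrite each entry $\,\overline{\eta}_{ll^\prime}\,$ as an average of centered products of sub-Gaussian random variables, then apply Bernstein's inequality entrywise and conclude by a union bound over the $\,d^2\,$ pairs $\,(l, l^\prime)$.

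First, I would introduce the standardized scalar random variables
\begin{equation}
z_{il} \,\eqdef\, \frac{\bu_l^\T \bx_i}{\sqrt{\lambda_l}}, \qquad i \in [n], \; l \in [d]. \nonumber
\end{equation}
Since $\,\St^{-1/2} \bx_i\,$ is sub-Gaussian by Assumption~\ref{A:X} and $\,\St^{-1/2}\bu_l = \lambda_l^{-1/2}\bu_l$, each $\,z_{il}\,$ is sub-Gaussian with $\,\| z_{il}\|_{\psi_2} \lesssim 1$, mean zero, and $\,\E[z_{il} z_{il^\prime}] = \bu_l^\T \St \bu_{l^\prime}/\sqrt{\lambda_l\lambda_{l^\prime}} = \mathbbm{1}\{ l=l^\prime\}\,$ because the $\,\bu_l\,$ are eigenvectors of $\,\St$. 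Using $\,\Se = n^{-1}\sum_i \bx_i\bx_i^\T$, I can then write
\begin{equation}
\overline{\eta}_{ll^\prime} \,=\, \frac{1}{n}\sum\limits_{i=1}^n \Bigl( z_{il} z_{il^\prime} - \E[z_{il}z_{il^\prime}]\Bigr). \nonumber
\end{equation}

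Next, each summand is a centered product of two sub-Gaussians, hence sub-Exponential with $\,\psi_1$-norm bounded by an absolute constant (this is a standard fact, e.g. Lemma 2.7.7 in Vershynin). Bernstein's inequality for independent centered sub-Exponential random variables then yields, for every $\,t \leq c\,$ with some absolute $\,c$,
\begin{equation}
\Prob\bigl[\, |\overline{\eta}_{ll^\prime}| > t \,\bigr] \,\leq\, 2\exp\bigl( -c^\prime n t^2 \bigr). \nonumber
\end{equation}
Setting $\,t = C\sqrt{\log(d/\delta)/n}\,$ for a sufficiently large constant $\,C$, the right-hand side is at most $\,2\delta/d^2$, provided $\,t \leq c$, which is exactly guaranteed by the smallness assumption on $\,\epsilon\,$ in Assumption~\ref{A:tech}. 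A union bound over the $\,d^2\,$ pairs $\,(l, l^\prime) \in [d]\times [d]\,$ (together with $\,\log(d^2/\delta) \lesssim \log(d/\delta)$) gives the claim.

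The only mildly delicate point is ensuring the sub-Gaussian regime of Bernstein's inequality applies -- this is where Assumption~\ref{A:tech} enters: if $\,\sqrt{\log(d/\delta)/n}\,$ were not small, one would instead get a sub-Exponential (rather than sub-Gaussian) tail, giving $\,\log(d/\delta)/n\,$ instead of $\,\sqrt{\log(d/\delta)/n}$. The rest is a routine computation; the main obstacle, if any, is only verifying the sub-Exponential norm bound for the product $\,z_{il}z_{il^\prime}$, which is standard.
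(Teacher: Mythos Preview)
Your proposal is correct and follows essentially the same route as the paper: rewrite $\overline{\eta}_{ll^\prime}$ as a sample average of centered products of sub-Gaussians (your $z_{il}$ equals the paper's $\bu_l^\T\St^{-1/2}\bx_i$ since $\St^{-1/2}\bu_l=\lambda_l^{-1/2}\bu_l$), invoke Lemma~2.7.7 of Vershynin to get the sub-Exponential property, apply Bernstein, and take a union bound over the $d^2$ pairs, with Assumption~\ref{A:tech} used exactly as you say to stay in the sub-Gaussian regime of the Bernstein bound.
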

\noindent Define $\Omega_3$ to be the event where the inequality from the previous lemma holds.

So, the relative perturbation bounds hold true on $\Omega_3$ for indices $j$ for which $\br_j(\St) \leq 1/(3\epsilon)$. We would like to have this property for as many indices as possible.
Under Assumption~\ref{A:cvxdecay} we have (see \cite{Wahl}, inequalities (3.30); \cite{Jirak}, Lemma 7.13; \cite{Cardot}, Lemma 6.1)
	\begin{equation}
            \begin{aligned}
            	\br_j(\St) \leq 1+ 2 \sum\limits_{\substack{l=1\\l\neq j}}^d \frac{\lambda_l}{|\lambda_j - \lambda_l|} \leq 1 + 2C j \log(j).
                \nonumber
            \end{aligned}
        \end{equation}
Note that with
	\begin{equation}
            \begin{aligned}
            	 k^* \eqdef (\epsilon \log(1/\epsilon))^{-2/3}
                \nonumber
            \end{aligned}
        \end{equation}
 we indeed have $\br_j(\St) \leq 1/(3\epsilon)$ for all $j\in[k^*]$ due to Assumption~\ref{A:tech}. Hence, the following relative perturbation bounds from \cite{Wahl} hold true.
\begin{lemma} \label{L:relbounds}
	For all $j\in[k^*]$ on $\Omega_3$ holds
	\begin{equation}
            \begin{aligned}
            	 |\widehat{\lambda}_j - \lambda_j| \leq C\epsilon\lambda_j \;\;\;\text{ and }\;\;\;
            	 \| \widehat{\bu}_j - \bu_j\|_2 \leq C\epsilon\sqrt{\sum\limits_{\substack{l=1\\l\neq j}}^d \frac{\lambda_j\lambda_l}{(\lambda_j-\lambda_l)^2}}\,.
                \nonumber
            \end{aligned}
        \end{equation}
Furthermore,
	for all $j\in[k^*]$ and $l\in[d], l\neq j$ on $\Omega_3$ holds
	\begin{equation}
            \begin{aligned}
            	 |\widehat{\bu}_j^\T \bu_l| \leq C\epsilon \frac{\sqrt{\lambda_j\lambda_l}}{|\lambda_j-\lambda_l|}\,.
                \nonumber
            \end{aligned}
        \end{equation}
\end{lemma}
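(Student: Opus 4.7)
The plan is to verify the two hypotheses that underlie the relative perturbation bounds of \cite{Wahl} and then invoke those bounds directly. On the event $\Omega_3$, Lemma~\ref{L:psi_n} gives $\max_{l,l'\in[d]}|\overline{\eta}_{ll'}|\leq \epsilon$, so the first hypothesis $|\overline{\eta}_{ll'}|\leq x$ is satisfied with $x=\epsilon$. For the second hypothesis $\br_j(\St)\leq 1/(3x)=1/(3\epsilon)$, I would use the bound
\[
\br_j(\St)\;\leq\;1+2\sum_{\substack{l=1\\l\neq j}}^{d}\frac{\lambda_l}{|\lambda_j-\lambda_l|}\;\leq\;1+2Cj\log j
\]
recalled just before the lemma statement under Assumption~\ref{A:cvxdecay}. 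The definition $k^*=(\epsilon\log(1/\epsilon))^{-2/3}$ is tailored precisely so that $k^*\log k^*\lesssim 1/\epsilon$; a short computation then shows $\br_j(\St)\leq 1/(3\epsilon)$ for every $j\in[k^*]$, as announced in the discussion preceding the lemma.

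With the two hypotheses in place, the three inequalities of Lemma~\ref{L:relbounds} follow as direct quotations of the corresponding statements in \cite{Wahl}. Namely, the relative eigenvalue bound $|\widehat\lambda_j-\lambda_j|\leq C\epsilon\lambda_j$ is Wahl's eigenvalue theorem; the Davis--Kahan-style eigenvector bound
\[
\|\widehat\bu_j-\bu_j\|_2\;\leq\;C\epsilon\sqrt{\sum_{\substack{l=1\\l\neq j}}^{d}\frac{\lambda_j\lambda_l}{(\lambda_j-\lambda_l)^2}}
\]
is Wahl's eigenvector theorem; and the per-coordinate inner-product bound $|\widehat\bu_j^\T\bu_l|\leq C\epsilon\sqrt{\lambda_j\lambda_l}/|\lambda_j-\lambda_l|$ is the sharpened companion statement, obtained by isolating a single off-diagonal term in the spectral projection inequality. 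No new estimates are needed; the lemma is a packaging of results already on the record.

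\textbf{Main obstacle.} The only delicate point is the calibration of $k^*$: one must make sure that the implicit constants in $\br_j(\St)\lesssim j\log j$ and in $k^*\asymp (\epsilon\log(1/\epsilon))^{-2/3}$ are compatible, so that $\br_j(\St)\leq 1/(3\epsilon)$ really holds throughout $j\in[k^*]$ and not just up to a $\log$-factor. This is a purely bookkeeping matter once Assumption~\ref{A:cvxdecay} is in force, and, as signalled in Remark~\ref{R:cvx_decay}, if that assumption is dropped one simply redefines $k^*$ as the largest index up to which the relative-rank hypothesis of \cite{Wahl} survives, after which the same application of the cited perturbation bounds goes through verbatim.
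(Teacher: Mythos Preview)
Your proposal is correct and takes essentially the same approach as the paper: verify that the hypotheses of \cite{Wahl} hold on $\Omega_3$ for $j\in[k^*]$, then quote the relevant perturbation bounds. The paper's own proof is even terser, pointing specifically to Corollary~2 of \cite{Wahl} for the eigenvalue and eigenvector bounds and to Lemma~4 of \cite{Wahl} for the inner-product bound; your discussion of the calibration of $k^*$ is accurate but slightly over-cautious, since (as the paper notes immediately after the lemma) the choice $k^*=(\epsilon\log(1/\epsilon))^{-2/3}$ is in fact stricter than what Lemma~\ref{L:relbounds} alone requires---the tighter cutoff is only needed later for Lemma~\ref{L:opnorm}.
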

\noindent Note that the bounds from the previous lemma apply even for larger indices $j$, which can be up $(\epsilon\log(1/\epsilon))^{-1}$ (of order $n^{1/2}$), while we restrict $k^*$ to be of order $n^{1/3}$. Later in the proof it will be clear how this specific $k^*$ arises.

Now we are ready to proceed to the main part of the proof.
\begin{proof}[Proof of Theorem~\ref{Th2}]
	We prove the theorem for $k = k^*$, and it will be clear that the same proof works with any $k < k^*$.
	The proof for part (i) and part (ii) coincides up to the last step. Denote $\tau^\prime$ to be the general thresholding level, which is $\tau$ for part (i) and $\overline{\tau}$ for part(ii).
	Using the definition of $\ebeta$ given in \eqref{estimator}, the eigendecompositions $\St  = \bU \bLambda^2 \bU^\T$, $\Se  = \eU \eL^2 \eU^\T$ and the model $\YY = \XX\bbeta+\beps$, write the prediction error as
	\begin{equation}
            \begin{aligned}
            	\mathsf{PE}(\ebeta) &= (\ebeta-\bbeta)^\T \St (\ebeta-\bbeta)
            	\\&= \| \bLambda \bU^\T \eU\eL^{-1} \mathsf{SOFT}_{\tau^\prime}\left[\eL^{-1}\eU^\T\frac{\XX^\T\YY}{n} \right] - \bLambda\bU^\T\bbeta \|_2^2
            	\\&= \| \bLambda \bU^\T \eU\eL^{-1} \mathsf{SOFT}_{\tau^\prime}[\eL\eU^\T\bbeta + \bxi] - \bLambda\bU^\T\bbeta \|_2^2
                \nonumber
            \end{aligned}
        \end{equation}
        with $\bxi$ from Lemma~\ref{L:noise}.
 Let us add and subtract $\bLambda \bU^\T \eU\eU^\T \bbeta$ inside the norm and apply $\|\ba+\bbb\|_2^2 \leq 2\|\ba\|_2^2 + 2\|\bbb\|_2^2$:
 \begin{equation}
            \begin{aligned}
            	\mathsf{PE}(\ebeta) &\leq
            	2\|  \bLambda\bU^\T\bbeta - \bLambda\bU^\T\eU\eU^\T\bbeta\|_2^2
            	+
            	\\&\qquad+ 2\| \bLambda \bU^\T \eU\eL^{-1} \mathsf{SOFT}_{\tau^\prime}\left[\eL\eU^\T\bbeta+\bxi \right] - \bLambda\bU^\T\eU\eU^\T\bbeta \|_2^2  =: I_1+I_2.
                \nonumber
            \end{aligned}
        \end{equation}
We first deal with $I_1$:
 \begin{equation}
            \begin{aligned}
            	\frac{I_1}{2} &=
            	\|  \bLambda\bU^\T\bbeta - \bLambda\bU^\T\eU\eU^\T\bbeta\|_2^2
            	= \bbeta^\T (\Id_d - \eU\eU^\T) \,\St\,(\Id_d - \eU\eU^\T) \bbeta
            	\\&=\bbeta^\T (\Id_d - \eU\eU^\T) \,(\St - \Se)\,(\Id_d - \eU\eU^\T) \bbeta
            	\leq \| \Se - \St\|\,\|(\Id_d - \eU\eU^\T) \bbeta\|_2^2
            	\\&\leq \| \Se - \St\|\,\|\bbeta\|_2^2
            	\leq C\|\St\|\|\bbeta\|_2^2 \sqrt{\frac{\reff[\St] + \log(1/\delta)}{n}} \,,
            	\nonumber
            \end{aligned}
        \end{equation}
        where  the last inequality holds on $\Omega_2$ due to Lemma~\ref{L:covconc} and Assumption~\ref{A:tech}.

Next, we focus on $I_2$. We will decompose it into two parts: one will correspond to the first $k^*$ eigenvectors and eigenvalues, while the other will correspond to the rest $(r-k^*)$. Let us split
 \begin{equation}
            \begin{aligned}
            	\eU = [ \eU_{\leq k^*} \eU_{>k^*}] \;\;\;\text{ and }\;\;\;\eL = \begin{bmatrix} \eL_{\leq k^*} & \Oo_{k^*\times (r-k^*)} \\ \Oo_{(r-k^*)\times k^*} & \eL_{>k^*} \end{bmatrix},
            	\nonumber
            \end{aligned}
        \end{equation}
        where $\eL_{\leq k^*} \in \R^{k^*\times k^*}$, $\eU_{\leq k^*} \in\R^{d\times k^*}$ correspond to the first $k^*$ eigenvalues and eigenvectors, while $\eL_{> k^*} \in \R^{(r-k^*)\times (r-k^*)}$, $\eU_{>k^*} \in\R^{d\times (r-k^*)}$ correspond to the rest. Also let $\bxi = \left[\bxi_{\leq k^*}^\T \;\bxi_{>k^*}^\T\right]^\T$ with $\bxi_{\leq k^*} \in \R^{k^*}$ and $\bxi_{> k^*} \in \R^{r-k^*}$. Then
 \begin{equation}
            \begin{aligned}
            	 &\eU\eL^{-1} \mathsf{SOFT}_{\tau^\prime}\left[\eL\eU^\T\bbeta + \bxi\right] -\eU\eU^\T\bbeta
            	 =
            	 \\&\qquad =\eU_{\leq k^*}\eL_{\leq k^*}^{-1} \mathsf{SOFT}_{\tau^\prime}\left[\eL_{\leq k^*}\eU_{\leq k^*}^\T\bbeta + \bxi_{\leq k^*}\right] -\eU_{\leq k^*}\eU_{\leq k^*}^\T\bbeta
            	 \\&\qquad\qquad+
            	 \eU_{>k^*}\eL_{>k^*}^{-1} \mathsf{SOFT}_{\tau^\prime}\left[\eL_{>k^*}\eU_{>k^*}^\T\bbeta + \bxi_{>k^*}\right] -\eU_{>k^*}\eU_{>k^*}^\T\bbeta.
                \nonumber
            \end{aligned}
        \end{equation}
Again applying $\|\ba+\bbb\|_2^2 \leq 2\|\ba\|_2^2 + 2\|\bbb\|_2^2$ we obtain
 \begin{equation}
            \begin{aligned}
            	 &\frac{I_2}{2} = \| \bLambda\bU^\T \eU\eL^{-1} \mathsf{SOFT}_{\tau^\prime}\left[\eL\eU^\T\bbeta + \bxi\right] - \bLambda\bU^\T \eU\eU^\T\bbeta \|_2^2
            	 \\&\qquad \leq 2\| \bLambda\bU^\T \eU_{\leq k^*}\eL_{\leq k^*}^{-1} \mathsf{SOFT}_{\tau^\prime}\left[\eL_{\leq k^*}\eU_{\leq k^*}^\T\bbeta + \bxi_{\leq k^*}\right] - \bLambda\bU^\T \eU_{\leq k^*}\eU_{\leq k^*}^\T\bbeta\|_2^2
            	 \\&\qquad\qquad+
            	 2\| \bLambda\bU^\T \eU_{>k^*}\eL_{>k^*}^{-1} \mathsf{SOFT}_{\tau^\prime}\left[\eL_{>k^*}\eU_{>k^*}^\T\bbeta + \bxi_{>k^*}\right] - \bLambda\bU^\T \eU_{>k^*}\eU_{>k^*}^\T\bbeta \|_2^2 
            	 \\&\qquad=: I_3 + I_4.
                \nonumber
            \end{aligned}
        \end{equation}
        So, to upper bound $I_2$ we will upper bound $I_3$ and $I_4$ separately.

        Consider $I_4$.
        Denote
        \begin{equation}
        	\begin{aligned}
            	 \bgamma \eqdef \eL_{>k^*}^{-1} \mathsf{SOFT}_{\tau^\prime} \left[ \eL_{>k^*} \eU_{>k^*}^\T\bbeta + \bxi_{>k^*} \right] - \eU_{>k^*}^\T \bbeta \in \R^{r-k^*}.
                \nonumber
            \end{aligned}
        \end{equation}
        Let us analyze $j$-th component $\gamma_j$, for $j\in[r-k^*]$, using the definition of $\mathsf{SOFT}_{\tau^\prime}[\,\cdot\,]$. We have two cases:
        \begin{itemize}
        	\item If $|\widehat{\lambda}_{j+k^*}^{1/2} \widehat{\bu}_{j+k^*}^\T \bbeta + \xi_{j+k^*}| > \tau^\prime$, then $\gamma_j = \widehat{\lambda}_{j+k^*}^{-1/2} (\xi_{j+k^*} \pm \tau^\prime)$ (the actual sign will play no role).
        	Since by Lemma~\ref{L:noise} on $\Omega_1$
        	\begin{equation}
        	\begin{aligned}
        	\widehat{\lambda}_{j+k^*}^{1/2} |\widehat{\bu}_{j+k^*}^\T \bbeta| \geq \tau^\prime - |\xi_{j+k^*}| \geq \tau^\prime/2,\nonumber
            \end{aligned}
        \end{equation}
         we have
        	\begin{equation}
        	\begin{aligned}
            	 |\gamma_j| \leq \widehat{\lambda}_{j+k^*}^{-1/2} (|\xi_{j+k^*}| + \tau^\prime) \leq \widehat{\lambda}_{j+k^*}^{-1/2} \cdot \frac{3\tau^\prime}{2}\leq 3|\widehat{\bu}_{j+k^*}^\T \bbeta|.
                \nonumber
            \end{aligned}
        \end{equation}
        	\item If $|\widehat{\lambda}_{j+k^*}^{1/2} \widehat{\bu}_{j+k^*}^\T \bbeta + \xi_{j+k^*}| \leq \tau^\prime$, then $\gamma_j = -\widehat{\bu}_{j+k^*}^\T\bbeta$, and we directly get
        	\begin{equation}
        	\begin{aligned}
            	 |\gamma_j| = |\widehat{\bu}_{j+k^*}^\T \bbeta|.
                \nonumber
            \end{aligned}
        \end{equation}
        \end{itemize}
        In any case, $ |\gamma_j| \leq 3|\widehat{\bu}_{j+k^*}^\T \bbeta|$ for all $j\in[r-k^*]$, and therefore $\|\bgamma\|_2^2 \leq 9\|\bbeta\|_2^2$ on $\Omega_1$.
        Hence,
         \begin{equation}
        	\begin{aligned}
            	 \frac{I_4}{2} &= \| \bLambda\bU^\T \eU_{>k^*} \bgamma\|_2^2 = \bgamma^\T \eU_{>k^*}^\T\St\eU_{>k^*} \bgamma
            	 =\bgamma^\T \eU_{>k^*}^\T (\St-\Se)  \eU_{>k^*} \bgamma
            	 + \bgamma^\T \eU_{>k^*}^\T \Se  \eU_{>k^*} \bgamma
            	 \\&\leq \| \Se-\St\| \|\eU_{>k^*}\bgamma\|_2^2 + \bgamma^\T\eL_{>k^*}^2   \bgamma
            	 \leq \| \Se-\St\| \|\bgamma\|_2^2 + \widehat{\lambda}_{k^*+1}\|  \bgamma\|_2^2.
                \nonumber
            \end{aligned}
        \end{equation}
        We bound the first term on the right-hand side on $\Omega_2$ by Lemma~\ref{L:covconc}, and for the second term on $\Omega_3$ holds $\widehat{\lambda}_{k^*+1} \leq \widehat{\lambda}_{k^*} \leq (1+C\epsilon)\lambda_{k^*} \leq C^\prime\lambda_{k^*} $ due to Lemma~\ref{L:relbounds} and Assumption~\ref{A:tech}. Taking into account $\| \bgamma\|_2 \leq C\|\bbeta\|_2$ on $\Omega_1$, we get on $\Omega_1 \cap \Omega_2 \cap \Omega_3$
        \begin{equation}
        	\begin{aligned}
            	 I_4&\leq C \|\St\|\|\bbeta\|_2^2\left(\sqrt{\frac{\reff[\St] + \log(1/\delta)}{n}} + \frac{\lambda_{k^*}}{\lambda_1} \right).
                \nonumber
            \end{aligned}
        \end{equation}

        Finally, it is left to bound $I_3$.
        Denote
        \begin{equation}
        	\begin{aligned}
            	 \widehat{\bomega} \eqdef  \mathsf{SOFT}_{\tau^\prime} \left[ \eL_{\leq k^*} \eU_{\leq k^*}^\T\bbeta + \bxi_{\leq k^*} \right] - \eL_{\leq k^*} \eU_{\leq k^*}^\T \bbeta \in \R^{k^*}.
                \nonumber
            \end{aligned}
        \end{equation}
        Then,
        \begin{equation}
        	\begin{aligned}
            	 \frac{I_3}{2} &= \| \bLambda\bU^\T \eU_{\leq k^*} \eL_{\leq k^*}^{-1}\widehat{\bomega}\|_2^2
            	 \leq \| \bLambda\bU^\T \eU_{\leq k^*} \eL_{\leq k^*}^{-1}\|^2 \| \widehat{\bomega}\|_2^2.
                \nonumber
            \end{aligned}
        \end{equation}
         An upper bound on $\| \bLambda\bU^\T \eU_{\leq k^*} \eL_{\leq k^*}^{-1}\|$ is provided in the next lemma.
        \begin{lemma} \label{L:opnorm}
        	Suppose Assumption~\ref{A:X} holds. Then on $\,\Omega_3\,$ holds
        	\begin{equation}
        	\begin{aligned}
            	 \| \bLambda\bU^\T \eU_{\leq k^*} \eL_{\leq k^*}^{-1} \| \leq C.
                \nonumber
            \end{aligned}
        \end{equation}
        \end{lemma}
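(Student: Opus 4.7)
The plan is to square the matrix inside the norm and reduce the claim to controlling a small perturbation of the identity. Denoting $M \eqdef \bLambda \bU^\T \eU_{\leq k^*} \eL_{\leq k^*}^{-1}$, note that the columns of $\eU_{\leq k^*}$ are orthonormal eigenvectors of $\Se$ with eigenvalues $\widehat{\lambda}_1, \ldots, \widehat{\lambda}_{k^*}$, so that $\eU_{\leq k^*}^\T \Se \eU_{\leq k^*} = \eL_{\leq k^*}^2$. Decomposing $\St = \Se + (\St - \Se)$ then yields
\[
M^\T M \;=\; \eL_{\leq k^*}^{-1} \eU_{\leq k^*}^\T \St \eU_{\leq k^*} \eL_{\leq k^*}^{-1} \;=\; \Id_{k^*} + N,
\qquad
N \,\eqdef\, \eL_{\leq k^*}^{-1} \eU_{\leq k^*}^\T (\St - \Se) \eU_{\leq k^*} \eL_{\leq k^*}^{-1}.
\]
It therefore suffices to show $\|N\| \lesssim 1$ on $\Omega_3$.

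The core step is an entrywise estimate on $N$. For $j, j' \in [k^*]$ one has $N_{jj'} = \widehat{\lambda}_j^{-1/2}\widehat{\lambda}_{j'}^{-1/2}\,\widehat{\bu}_j^\T (\St - \Se) \widehat{\bu}_{j'}$. Expanding $\widehat{\bu}_j$ and $\widehat{\bu}_{j'}$ in the $\St$-eigenbasis $\{\bu_l\}$ and applying $|\overline{\eta}_{ll'}| \leq \epsilon$ from Lemma~\ref{L:psi_n} gives
\[
|\widehat{\bu}_j^\T (\St - \Se) \widehat{\bu}_{j'}|
\;\leq\; \epsilon\left(\sum_l |\widehat{\bu}_j^\T \bu_l|\sqrt{\lambda_l}\right)\!\!\left(\sum_{l'} |\widehat{\bu}_{j'}^\T \bu_{l'}|\sqrt{\lambda_{l'}}\right).
\]
For the inner sum I split off $l = j$ (contributing at most $\sqrt{\lambda_j}$ since $|\widehat{\bu}_j^\T \bu_j| \leq 1$) and bound the rest by the relative eigenvector estimate $|\widehat{\bu}_j^\T \bu_l| \leq C\epsilon\sqrt{\lambda_j\lambda_l}/|\lambda_j - \lambda_l|$ from Lemma~\ref{L:relbounds}, obtaining
\[
\sum_{l \neq j} |\widehat{\bu}_j^\T \bu_l| \sqrt{\lambda_l}
\;\leq\; C\epsilon\sqrt{\lambda_j}\sum_{l\neq j} \frac{\lambda_l}{|\lambda_j - \lambda_l|}
\;\leq\; C\epsilon\sqrt{\lambda_j}\,\br_j(\St)
\;\leq\; C\sqrt{\lambda_j},
\]
where the last step uses $\epsilon\,\br_j(\St) \leq 1/3$ for $j \in [k^*]$, as already recorded in the excerpt. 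Combining this with the analogous bound for $j'$ and the relative eigenvalue bound $\widehat{\lambda}_j \asymp \lambda_j$ from Lemma~\ref{L:relbounds} yields $|N_{jj'}| \leq C\epsilon$ uniformly in $j, j' \in [k^*]$.

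Finally, since $N$ is $k^* \times k^*$ with all entries bounded by $C\epsilon$, the crude estimate $\|N\| \leq \|N\|_F \leq C k^* \epsilon$ applies. With $k^* = (\epsilon\log(1/\epsilon))^{-2/3}$ this gives $\|N\| \leq C\epsilon^{1/3}(\log(1/\epsilon))^{-2/3}$, which is at most an absolute constant under Assumption~\ref{A:tech}. Consequently $\|M\|^2 = \|M^\T M\| \leq 1 + \|N\| \lesssim 1$, as required. The main technical hurdle is the entrywise bound on $N$: a naive inequality such as $\|N\| \leq \|\St - \Se\|\,\widehat{\lambda}_{k^*}^{-1}$ would be hopelessly coarse when the spectrum of $\St$ decays, so it is essential to expand in the $\bu_l$ basis and combine the entrywise control of $\overline{\eta}$ from Lemma~\ref{L:psi_n} with the near-concentration of $\widehat{\bu}_j$ on $\bu_j$ from Lemma~\ref{L:relbounds}.
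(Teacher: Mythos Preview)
Your proof is correct and takes a genuinely different route from the paper's. The paper factors $M = \bLambda\bU^\T\eU_{\leq k^*}\bLambda_{\leq k^*}^{-1}\cdot\bLambda_{\leq k^*}\eL_{\leq k^*}^{-1}$, controls the second factor via the relative eigenvalue bound, and then bounds the $d\times k^*$ perturbation $\bH \eqdef \bLambda\bU^\T\eU_{\leq k^*}\bLambda_{\leq k^*}^{-1}-[\Id_{k^*};\Oo]^\T$ columnwise in $\ell_1$, arriving at $\|\bH\|_{\Fr}^2 \lesssim \epsilon^2 {k^*}^3\log(k^*)^2$, which is $O(1)$ precisely at $k^*=(\epsilon\log(1/\epsilon))^{-2/3}$. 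Your approach instead passes to the $k^*\times k^*$ Gram matrix $M^\T M=\Id_{k^*}+N$ and obtains the uniform entrywise bound $|N_{jj'}|\lesssim\epsilon$, giving $\|N\|_{\Fr}\lesssim k^*\epsilon$. This is cleaner (it avoids the $d\times k^*$ object entirely and the separate treatment of $\bLambda_{\leq k^*}\eL_{\leq k^*}^{-1}$) and in fact sharper: your bound stays $O(1)$ for any $k^*\lesssim\epsilon^{-1}$, whereas the paper's Frobenius estimate on $\bH$ is what forces $k^*$ down to order $(\epsilon\log(1/\epsilon))^{-2/3}$. Indeed, the paper remarks right after the lemma that this is the only place in the whole proof where the smaller choice of $k^*$ is needed; your argument removes that bottleneck.
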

        \begin{remark}
        	The previous lemma is the only place where we use $k^* = (\epsilon\log(1/\epsilon))^{-2/3}$. The rest of the proof would go through if $k^*$ was defined as $(\epsilon\log(1/\epsilon))^{-1}$.
        \end{remark}
        \begin{remark}
        	 Interestingly, a closely related to $\bLambda\bU^\T \eU_{\leq k^*} \eL_{\leq k^*}^{-1} $ matrix appears also in \cite{Bartlett}. The main difficulty of their proof is to find regimes of eigenvalues such that for $\bC$ defined as
        	 \begin{align}
        	 	\bC \eqdef (\XX\XX^\T)^{-1} \XX\St\XX^\T(\XX\XX^\T)^{-1}
        	 	\nonumber
        	 \end{align}
        	 holds $\Tr[\bC] = o(1)$ as $n\to\infty$. Using SVD $n^{-1/2} \XX = \eV \eL \eU^\T$ one can show
        	 \begin{align}
        	 	\Tr[\bC] = \frac{1}{n} \Tr[\eL^{-1} \eU^\T \bU\bLambda^2\bU^\T \eU\eL^{-1}],
        	 \nonumber
        	 \end{align}
        	while in Lemma~\ref{L:opnorm} we essentially upper bound the operator norm of somewhat simpler (in a sense that we truncate the sample eigenvalues and eigenvector beyond $k^*$-th) matrix \\$\eL_{\leq k^*}^{-1} \eU_{\leq k^*}^\T \bU\bLambda^2\bU^\T \eU_{\leq k^*}\eL_{\leq k^*}^{-1}$. The latter task turns out to be much easier and does not require specific regimes of eigenvalues, unlike the former one.
        \end{remark}
        To deal with $\| \widehat{\bomega} \|_2^2$, we first state the following lemma which has two parts, one of which will help to conclude the proof of claim (i), and the other one will be useful for claim (ii).
        \begin{lemma} \label{L:eigen}
         On $\Omega_3$ holds
         \begin{enumerate}[(i)]
         \item
         \begin{equation}
        	\begin{aligned}
            	 \| \eL_{\leq k^*} \eU_{\leq k^*}^\T \bbeta - \bLambda_{\leq k^*} \bU_{\leq k^*}^\T \bbeta \|_2^2 \leq C \|\St\| \|\bbeta\|_2^2 \,\epsilon\left( \reff[\St] + \epsilon\sum\limits_{j=1}^{k^*}  \frac{\lambda_j\,j^2}{\lambda_1} \right).
                \nonumber
            \end{aligned}
        \end{equation}
         \item
         \begin{equation}
        	\begin{aligned}
            	 \| \eL_{\leq k^*} \eU_{\leq k^*}^\T \bbeta - \bLambda_{\leq k^*} \bU_{\leq k^*}^\T \bbeta \|_\infty \leq \frac{\tau^\infty}{2} \eqdef C \|\St\|^{1/2} \|\bbeta\|_2 \,\epsilon^{1/2}\max\limits_{j\in[k^*]} \left( \frac{\lambda_j\,(1+\epsilon j^2)}{\lambda_1} \right)^{1/2}.
                \nonumber
            \end{aligned}
        \end{equation}
        \end{enumerate}
        \end{lemma}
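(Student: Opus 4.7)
The plan is to reduce both (i) and (ii) to a single pointwise estimate
\[ |\widehat{\lambda}_j^{1/2}\widehat{\bu}_j^\T\bbeta - \lambda_j^{1/2}\bu_j^\T\bbeta|^2 \;\lesssim\; \epsilon\,\lambda_j\,(1+\epsilon j^2)\,\|\bbeta\|_2^2 \]
valid for every $j \in [k^*]$ on the event $\Omega_3$. Once this is in hand, (i) follows by summing over $j$ (bounding $\sum_{j=1}^{k^*}\lambda_j \leq \|\St\|\reff[\St]$ and extracting $\lambda_1 = \|\St\|$ from the $j^2$ piece), while (ii) follows by taking a maximum over $j$ and a square root, which yields exactly $\tau^\infty/2$. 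A standard sign adjustment, so that $\widehat{\bu}_j^\T\bu_j \geq 0$, is assumed from the outset.

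To establish the pointwise bound, I decompose
\[ \widehat{\lambda}_j^{1/2}\widehat{\bu}_j^\T\bbeta - \lambda_j^{1/2}\bu_j^\T\bbeta = \bigl(\widehat{\lambda}_j^{1/2}-\lambda_j^{1/2}\bigr)\widehat{\bu}_j^\T\bbeta + \lambda_j^{1/2}(\widehat{\bu}_j - \bu_j)^\T\bbeta \]
and use $(a+b)^2 \leq 2a^2 + 2b^2$. The identity $|\sqrt{\widehat{\lambda}_j}-\sqrt{\lambda_j}| = |\widehat{\lambda}_j - \lambda_j|/(\sqrt{\widehat{\lambda}_j}+\sqrt{\lambda_j})$ together with Lemma~\ref{L:relbounds} gives $|\widehat{\lambda}_j^{1/2}-\lambda_j^{1/2}| \lesssim \epsilon\lambda_j^{1/2}$, so the first squared term is at most $C\epsilon^2 \lambda_j \|\bbeta\|_2^2$. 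For the second, Cauchy--Schwarz reduces matters to $\lambda_j\|\widehat{\bu}_j - \bu_j\|_2^2 \|\bbeta\|_2^2$, and the eigenvector bound of Lemma~\ref{L:relbounds} yields $\lambda_j\|\widehat{\bu}_j - \bu_j\|_2^2 \lesssim \epsilon^2\sum_{l\neq j}\lambda_j^2\lambda_l/(\lambda_j - \lambda_l)^2$. Combining both contributions and using $\epsilon^2(1+j^2) \leq \epsilon(1+\epsilon j^2)$ (since $\epsilon \leq 1$) produces the desired pointwise bound, provided one shows
\[ \sum_{l\neq j}\frac{\lambda_j\lambda_l}{(\lambda_j - \lambda_l)^2} \;\lesssim\; j^2 \]
(modulo log factors). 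For this I would use $\sum_{l\neq j}\lambda_l/(\lambda_j-\lambda_l)^2 \leq \bigl(\min_{l\neq j}|\lambda_j - \lambda_l|\bigr)^{-1}\sum_{l\neq j}\lambda_l/|\lambda_j-\lambda_l|$, so that multiplying by $\lambda_j$ yields a product of the two components of $\br_j(\St)$; the estimate $\br_j(\St) \lesssim 1 + j\log j$ under Assumption~\ref{A:cvxdecay}, already noted in the excerpt, then closes the bound.

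The main obstacle I anticipate is the eigen-gap estimate above, which is essentially routine within the Wahl/Jirak/Cardot toolbox already invoked in the paper, but requires careful bookkeeping (in particular, checking that the $\min$ in the denominator of $\br_j(\St)$ really contributes a factor of $j$ and not merely a constant). Beyond that step, the argument is an orderly combination of Lemma~\ref{L:relbounds}, Cauchy--Schwarz, and the elementary inequality $\epsilon^2 \leq \epsilon$; no conceptually new ingredient is needed.
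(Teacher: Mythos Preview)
Your proposal is correct and follows essentially the same route as the paper: establish the coordinate-wise bound $|\widehat{\lambda}_j^{1/2}\widehat{\bu}_j^\T\bbeta - \lambda_j^{1/2}\bu_j^\T\bbeta| \lesssim \|\bbeta\|_2\,\lambda_j^{1/2}(\epsilon^{1/2} + \epsilon j)$ via the same add-and-subtract decomposition, Lemma~\ref{L:relbounds}, and the convex-decay estimate, then sum for (i) and take the maximum for (ii). The only cosmetic differences are that the paper uses the cruder inequality $|\widehat{\lambda}_j^{1/2}-\lambda_j^{1/2}| \leq |\widehat{\lambda}_j-\lambda_j|^{1/2}$ (giving $\epsilon^{1/2}\lambda_j^{1/2}$ directly rather than your sharper $\epsilon\lambda_j^{1/2}$, which you then relax anyway), and that for the eigen-gap sum the paper simply cites the Wahl/Jirak inequalities to get $\sum_{l\neq j}\lambda_j\lambda_l/(\lambda_j-\lambda_l)^2 \lesssim j^2$ without the log factor, rather than factoring through the two pieces of $\br_j(\St)$ as you suggest.
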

        \noindent
        \noindent So, to deal with part (ii) of Theorem~\ref{Th2}, we notice that our thresholding level $\tau^\prime = \overline{\tau} = \tau + \tau^\infty$, where $\tau$ from Lemma~\ref{L:noise} is responsible for the noise and $\tau^\infty$ from Lemma~\ref{L:eigen} is responsible for the estimation of the eigenvalues and eigenvectors. Now we analyze the components $\widehat{\omega}_j$ of $\widehat{\bomega}$ for $j\in[k^*]$.
        \begin{itemize}
        	\item If $|\widehat{\lambda}_j^{1/2} \widehat{\bu}_j^\T \bbeta + \xi_j| > \overline{\tau}$, then $\widehat{\omega}_j = \xi_j \pm \overline{\tau}$ (the sign again doesn't matter). Moreover, due to Lemma~\ref{L:noise} and Lemma~\ref{L:eigen} (ii) on $\Omega_1\cap\Omega_3$
        	 \begin{equation}
        	\begin{aligned}
            	 |\lambda_j^{1/2}\bu_j^\T \bbeta| \geq \overline{\tau} - |\xi_j| - |\widehat{\lambda}_j^{1/2}\widehat{\bu}_j^\T \bbeta-\lambda_j^{1/2}\bu_j^\T \bbeta| \geq \overline{\tau} - \frac{\tau}{2} - \frac{\tau^\infty}{2} = \frac{\overline{\tau}}{2}.
                \nonumber
            \end{aligned}
        \end{equation}
        Hence,
        \begin{equation}
        	\begin{aligned}
            	 |\widehat{\omega}_j| \leq 3\min(\overline{\tau}, |\lambda_j^{1/2}\bu_j^\T \bbeta|).
                \nonumber
            \end{aligned}
        \end{equation}
        	\item If $|\widehat{\lambda}_j^{1/2} \widehat{\bu}_j^\T \bbeta + \xi_j| \leq \overline{\tau}$, then $\widehat{\omega}_j = -\widehat{\lambda}_j^{1/2} \widehat{\bu}_j^\T \bbeta$. Furthermore, again by Lemma~\ref{L:noise} and Lemma~\ref{L:eigen} (ii) on $\Omega_1\cap\Omega_3$
        	 \begin{equation}
        	\begin{aligned}
            	 |\lambda_j^{1/2}\bu_j^\T \bbeta| \leq \overline{\tau} + |\xi_j| + |\widehat{\lambda}_j^{1/2}\widehat{\bu}_j^\T \bbeta-\lambda_j^{1/2}\bu_j^\T \bbeta| \leq \overline{\tau} + \frac{\tau}{2} + \frac{\tau^\infty}{2} = \frac{3\overline{\tau}}{2}.
                \nonumber
            \end{aligned}
        \end{equation}
        Thus,
        \begin{equation}
        	\begin{aligned}
            	 |\widehat{\omega}_j| \leq 3\min(\overline{\tau}, |\lambda_j^{1/2}\bu_j^\T \bbeta|).
                \nonumber
            \end{aligned}
        \end{equation}
        \end{itemize}
        In both cases, $|\widehat{\omega}_j| \leq 3\min(\overline{\tau}, |\lambda_j^{1/2}\bu_j^\T \bbeta|)$, and based on the same derivation as in the proof of Theorem~\ref{Th1}, we obtain on $\Omega_1\cap\Omega_3$
        \begin{equation}
        	\begin{aligned}
            	 \|\widehat{\bomega}\|_2^2 \leq 9\inf\limits_{q\in[0, 2]} \left\{ \overline{\tau}^{2-q} \|\bLambda_{\leq k^*}\bU_{\leq k^*}^\T \bbeta\|_q^q \right\}.
                \nonumber
            \end{aligned}
        \end{equation}
       For part (i) we act slightly differently. Now $\tau^\prime = \tau$. We decompose
       \begin{align}
       	\widehat{\bomega} = \bomega + \Delta\bomega,
       	\nonumber
       \end{align}
       where\begin{equation}
        	\begin{aligned}
            	 \bomega \eqdef  \mathsf{SOFT}_{\tau} \left[ \bLambda_{\leq k^*} \bU_{\leq k^*}^\T\bbeta + \bxi_{\leq k^*} \right] - \bLambda_{\leq k^*} \bU_{\leq k^*}^\T \bbeta \in \R^{k^*}.
                \nonumber
            \end{aligned}
        \end{equation}
        By Lemma~\ref{L:eigen} (i) it is easy to bound on $\Omega_3$
        \begin{equation}
        	\begin{aligned}
            	 \| \Delta\bomega\|_2^2 = \| \widehat{\bomega} - \bomega\|_2^2\leq C \|\St\| \|\bbeta\|_2^2 \,\epsilon\left( \reff[\St] + \epsilon\sum\limits_{j=1}^{k^*}  \frac{\lambda_j\,j^2}{\lambda_1} \right).
                \nonumber
            \end{aligned}
        \end{equation}
        The norm $\| \bomega \|_2^2$ with thresholding at level $\tau$ can be bounded as in the proof of Theorem~\ref{Th1}: on $\Omega_1$
        \begin{equation}
        	\begin{aligned}
            	 \|\bomega\|_2^2 \leq 9\inf\limits_{q\in[0, 2]} \left\{ \tau^{2-q} \|\bLambda_{\leq k^*}\bU_{\leq k^*}^\T \bbeta\|_q^q \right\},
                \nonumber
            \end{aligned}
        \end{equation}
        which, together with the bound on $\|\Delta\bomega\|_2^2$, gives bound on $\|\widehat{\bomega}\|_2^2$ on $\Omega_1 \cap \Omega_3$.

        Putting all the bounds for $I_1$ and $I_2$ (in particular, for $I_3$ and $I_4$) together on the intersection of high probability events $\,\Omega_1\cap\Omega_2\cap \Omega_3$, adjusting $\delta \rightarrow \delta/3$ so that the intersection has probability at least $1-\delta$, we conclude the proof.
\end{proof}

\subsection{Proof of Theorem~\ref{Th:minimax}}
We first reduce the general linear regression model to the Gaussian sequence model, and then apply some classical results from the literature. Our original linear regression problem (restricted to the Gaussian noise case)
\begin{align}
	\YY = \XX\bbeta + \beps,\;\;\;\beps \sim \mathcal{N}(0, \sigma^2\Id_r)
\nonumber
\end{align}
can be rewritten in the canonical form
\begin{align}
	\YY = \sqrt{n} \eV \btheta + \beps,\;\;\;\beps \sim \mathcal{N}(0, \sigma^2\Id_r)
\nonumber
\end{align}
with $\btheta = \eL \eU^\T \bbeta \in \R^r$, and then as the Gaussian sequence model
\begin{align}
	\frac{\eV^\T\YY}{\sqrt{n}} = \btheta + \beps,\;\;\;\beps \sim \mathcal{N}(0, \sigma^2\Id_r).
\nonumber
\end{align}
Recall that the joint effective dimension and the signal-to-noise ratio can be expressed in terms of the canonical parameter as $\,\eff_{q,r}(\Se, \bbeta) = \|\btheta\|_q^q/\|\btheta\|_2^q\,$ and $\,\snr = \| \btheta\|_2/\sigma$. 
Hence, the parameter space $\,\mathcal{P}^{\XX}(q, \mathsf{D}, \mathsf{S})$ for the initial model translates into the parameter space
\begin{align}
	\mathcal{Q}_{r, n}(q, \mathsf{D}, \mathsf{S}) \eqdef \left\{ (\btheta, \sigma) \in \R^r\times \R_+:\;\;\; \|\btheta\|_q^q/\|\btheta\|_2^q \leq \mathsf{D},\; \| \btheta\|_2/\sigma\geq \mathsf{S}  \right\}
\nonumber
\end{align}
for the Gaussian sequence model. Also, any estimator $\,\widetilde{\bbeta} = \widetilde{\bbeta}(\XX, \YY)$ in the original problem corresponds to an estimator $\,\widetilde{\btheta} = \eL \eU^\T \widetilde{\bbeta}(\XX, \YY)$, and since $\XX$ is a fixed known design, we can write $\widetilde{\btheta} = \widetilde{\btheta}(\eV^\T \YY/\sqrt{n})$, so that it is indeed an estimator in the Gaussian sequence model (the reverse is also true). Therefore,
\begin{align}
	\inf\limits_{\widetilde{\bbeta}} \sup\limits_{\mathcal{P}^{\XX}(q, \mathsf{D}, \mathsf{S})} \E\left[ \frac{\mathsf{MSE}(\widetilde{\bbeta})}{\mathsf{MSE}(0)}\right] \;=\; 
		\inf\limits_{\widetilde{\btheta}} \sup\limits_{\mathcal{Q}_{r, n}(q, \mathsf{D}, \mathsf{S})} \E\left[ \frac{\| \widetilde{\btheta} - \btheta\|_2^2}{\|\btheta\|_2^2}\right],
	\nonumber
\end{align}
and to establish the desired minimax lower bounds for the general problem it is enough to study the minimax lower bound for the Gaussian sequence model in the right-hand side, which we will do next.
\\
\\
(i) We take the following subset of $\,\mathcal{Q}_{r, n}(q, \mathsf{D}, \mathsf{S})$ to prove the minimax lower bound:
\begin{align}
	\mathcal{Q}^{poly}_{r, n}(q, \mathsf{S}) \eqdef \left\{ (\btheta, \sigma) \in \R^r\times \R_+:\;\;\; |\theta_j| = j^{-1/q},\; \sigma =  \| \btheta\|_2/\mathsf{S}  \right\}.
\nonumber
\end{align}
It is easy to check that indeed $\,\mathcal{Q}^{poly}_{r, n}(q, \mathsf{S}) \subseteq \mathcal{Q}_{r, n}(q, \mathsf{D}, \mathsf{S})$ for large enough constant $\,\mathsf{D}$. Also, all $\,\btheta\,$ from this new set of parameters have the same $\ell_2$-norm of constant order, which we denote $\,h$ for concreteness, even though its value will not play a role.  We can write
\begin{align}
	\inf\limits_{\widetilde{\btheta}} \sup\limits_{\mathcal{Q}_{r, n}(q, \mathsf{D}, \mathsf{S})} \E\left[ \frac{\| \widetilde{\btheta} - \btheta\|_2^2}{\|\btheta\|_2^2}\right] \,\geq\, 
		\inf\limits_{\widetilde{\btheta}} \sup\limits_{\mathcal{Q}^{poly}_{r, n}(q, \mathsf{S})} \E\left[ \frac{\| \widetilde{\btheta} - \btheta\|_2^2}{\|\btheta\|_2^2}\right] \,=\,
		\frac{1}{h^2} \inf\limits_{\widetilde{\btheta}} \sup\limits_{\mathcal{Q}^{poly}_{r, n}(q, \mathsf{S})} \E\left[ \| \widetilde{\btheta} - \btheta\|_2^2\right].
	\nonumber
\end{align}

The minimax risk on the right-hand side is easy to deal with using, for instance, \cite{Draft}. In particular, by Proposition 4.16 and (4.47) of \cite{Draft} the minimax risk decomposes into the sum of univariate minimax risks, which are given in (4.40) of \cite{Draft}. Thus,
\begin{align}
	 \inf\limits_{\widetilde{\btheta}} \sup\limits_{\mathcal{Q}^{poly}_{r, n}(q, \mathsf{S})} \E\left[ \| \widetilde{\btheta} - \btheta\|_2^2\right] \,\gtrsim\, \sum\limits_{j=1}^r \min\left( \frac{\sigma}{\sqrt{n}},\,j^{-1/q}\right)^2.
	\nonumber
\end{align}
One subtlety is that the results of \cite{Draft} that we used are derived for hyperrectangles, i.e. in the definition of $\,\mathcal{Q}^{poly}_{r, n}(q, \mathsf{S})\,$ we should have $\,|\theta_j| \leq j^{-1/q}\,$ instead of $\,|\theta_j| = j^{-1/q}$. (It is important for us to use equality here, because otherwise $\,\| \btheta\|_2\,$ cannot be bounded from below, and it is not clear how to get the minimax lower bound for the relative error.) However, the analysis of their proof shows that the lower bound holds also for $\,\mathcal{Q}^{poly}_{r, n}(q, \mathsf{S})\,$ defined in our way, since the underlying least favorable prior used to obtain the lower bound for the univariate minimax risk puts mass $\,1/2\,$ at the extremes of the interval, forcing its support to be contained in our parametric set. 

The right-hand side of the above display can be computed similarly to the proof of Proposition~\ref{Prop:tight} (iii), and we have
\begin{align}
	 \sum\limits_{j=1}^r \min\left( \frac{\sigma}{\sqrt{n}},\,j^{-1/q}\right)^2
	 \,\gtrsim\,\left(\frac{\sigma}{\sqrt{n}}\right)^{2-q}\,.
	\nonumber
\end{align}
Putting this all together, we obtain
\begin{align}
	\inf\limits_{\widetilde{\btheta}} \sup\limits_{\mathcal{Q}_{r, n}(q, \mathsf{D}, \mathsf{S})} \E\left[ \frac{\| \widetilde{\btheta} - \btheta\|_2^2}{\|\btheta\|_2^2}\right] \,\gtrsim\, 
		\frac{1}{h^2} \left(\frac{\sigma}{\sqrt{n}}\right)^{2-q} =
		\frac{1}{h^q} \frac{1}{(\mathsf{S}^2\, n)^{1-q/2}} \asymp \frac{1}{(\mathsf{S}^2\, n)^{1-q/2}} \,.
	\nonumber
\end{align}
\\
\\
(ii) The classical minimax lower bound for sparse linear regression are derived via a reduction to multiple hypothesis testing. In particular, one constructs a specific finite set of hypotheses $\,\{ \btheta^{(1)}, \ldots, \btheta^{(M)}\}\,$ (each of which has the desired sparsity) and using techniques from \cite{Tsybakov} shows (see, for instance, \cite{Rigollet}, Corollary 4.15 together with equivalence of Definition 4.1 and Definition 4.2 by (4.5))
\begin{align}	
	\inf\limits_{\widetilde{\btheta}} \sup\limits_{\substack{\btheta \in \R^r \\ \| \btheta \|_0 \leq \mathsf{D}}} \E\left[\| \widetilde{\btheta} - \btheta\|_2^2\right] \,\gtrsim\,
	\inf\limits_{\widetilde{\btheta}} \sup\limits_{\btheta \in \{ \btheta^{(l)} \}_{l=1}^M} \E\left[\| \widetilde{\btheta} - \btheta\|_2^2\right] \,\gtrsim\, \sigma^2 \frac{\mathsf{D} \log(er/\mathsf{D})}{n}.
	\nonumber
\end{align}
To get a minimax lower bound for the relative error, we need to make one slight modification. The construction of $\{ \btheta^{(1)}, \ldots, \btheta^{(M)} \}$ is based on the sparse Varshamov-Gilbert lemma (e.g. Lemma 4.14 in \cite{Rigollet}): it produces binary vectors  $\,\bomega^{(l)} \in \{ 0, 1\}^r$, $\,l\in[M]$, satisfying some properties, and then one sets $\btheta^{(l)} = C(r, n, \sigma, ...)\bomega^{(l)}\,$ for all $\,l\in[m]$, where $C(r,n,\sigma, ...)$ is carefully chosen and may depend on $r,n, \sigma,$ etc. With a simple linear transform $\bomega \to 2\bomega - \be$ (here $\be = [1, \ldots, 1]^\T\in \R^r$) we modify the binary vectors produced by the Varshamov-Gilbert lemma so that they belong to $\{ -1, 1\}^r$ after this modification. This forces the transformed vectors $\,\{\btheta^{(1)}, \ldots, \btheta^{(M)} \}\,$ to have the same $\ell_2$-norm without changing the essence of the argument. Now we can deal with the relative errors. Define for concreteness $\,h = \| \btheta^{(1)} \|_2 =\ldots = \|\btheta^{(M)}\|_2$, and write
\begin{align}	
	&\inf\limits_{\widetilde{\btheta}} \sup\limits_{\mathcal{Q}_{r, n}(0, \mathsf{D}, \mathsf{S})} \E\left[\frac{\| \widetilde{\btheta} - \btheta\|_2^2}{\|\btheta\|_2^2}\right] \,\gtrsim\,
	\inf\limits_{\widetilde{\btheta}} \sup\limits_{\substack{\btheta \in \{ \btheta^{(l)} \}_{l=1}^M\\\sigma = \|\btheta\|_2/\mathsf{S}}} \E\left[\frac{\| \widetilde{\btheta} - \btheta\|_2^2}{\|\btheta\|_2^2}\right] 
	\nonumber\\ &\qquad\gtrsim
	\frac{1}{h^2}\,\inf\limits_{\widetilde{\btheta}} \sup\limits_{\substack{\btheta \in \{ \btheta^{(l)} \}_{l=1}^M\\\sigma = h/\mathsf{S}}} \E\left[\| \widetilde{\btheta} - \btheta\|_2^2\right] 
	\,\gtrsim\, \frac{1}{h^2}\frac{h^2}{\mathsf{S}^2}\, \frac{\mathsf{D} \log(er/\mathsf{D})}{n} = \frac{\mathsf{D} \log(er/\mathsf{D})}{\mathsf{S}^2\,n}\,,
	\nonumber
\end{align}
as desired.
\subsection{Proof of Theorem~\ref{Th1a}}

Similarly to the proof of Theorem~\ref{Th1}, we write for $\,\ebeta\,$ from~\eqref{estimator2}
\begin{align}
 \mathsf{MSE}(\ebeta) = \left\| \eL^{-\varphi}\;\mathsf{T}_\tau\left[ \eL^\varphi (\btheta +  \bxi) \right] - \btheta\right\|_2^2 = \sum\limits_{j=1}^r \left| \widehat\lambda_j^{-\varphi/2} \; \mathsf{T}_{\tau}[\widehat{\lambda}_j^{\varphi/2}(\theta_j + \xi_j)] - \theta_j\right|^2.
\nonumber
\end{align}
For each individual term we apply the following two bounds. On one hand,
\begin{align}
	&\left| \widehat\lambda_j^{-\varphi/2} \; \mathsf{T}_{\tau}[\widehat{\lambda}_j^{\varphi/2}(\theta_j + \xi_j)] - \theta_j\right| =
	\left| \widehat\lambda_j^{-\varphi/2} \; \left( \mathsf{T}_{\tau}[\widehat{\lambda}_j^{\varphi/2}(\theta_j + \xi_j)] - \widehat{\lambda}_j^{\varphi/2}(\theta_j + \xi_j) \right) + \xi_j\right|
	\nonumber \\
	&\qquad \leq \widehat{\lambda}_j^{-\varphi/2}\tau + |\xi_j|
\leq \left( \frac{\widehat\lambda_1^{\varphi/2}}{\widehat\lambda_j^{\varphi/2}}+\frac{1}{2}\right) \sigma\rho \leq \frac{3}{2}\frac{\widehat\lambda_1^{\varphi/2}}{\widehat\lambda_j^{\varphi/2}} \,\sigma \rho,
\nonumber
\end{align}
where the first inequality uses property (ii) from Definition~\ref{thres} and the triangle inequality, and the second inequality holds on $\,\Omega_1$ by Lemma~\ref{L:noise}.
On the other hand,
\begin{align}
	&\left| \widehat\lambda_j^{-\varphi/2} \; \mathsf{T}_{\tau}[\widehat{\lambda}_j^{\varphi/2}(\theta_j + \xi_j)] - \theta_j\right| \leq \widehat\lambda_j^{-\varphi/2}\left| \mathsf{T}_{\tau}[\widehat{\lambda}_j^{\varphi/2}(\theta_j + \xi_j)] \right| + |  \theta_j |     \nonumber \\        &\qquad
	\leq \widehat\lambda_j^{-\varphi/2}\cdot c | \widehat\lambda_j^{\varphi/2} \theta_j| + |  \theta_j | 
	= (c+1)\,|\theta_j|,
\nonumber
\end{align}
where the second inequality is due to property (i) from Definition~\ref{thres} applied to $\,z = \widehat{\lambda}_j^{\varphi/2}(\theta_j + \xi_j)\,$ and $\,z^\prime = \widehat{\lambda}_j^{\varphi/2}\theta_j\,$ satisfying $\,|z - z^\prime| = \widehat{\lambda}_j^{\varphi/2}|\xi_j| \leq \widehat{\lambda}_1^{\varphi/2} \sigma\rho/2 = \tau/2\,$ on $\,\Omega_1\,$ by Lemma~\ref{L:noise}.
Thus, on $\,\Omega_1$
\begin{equation}
            \begin{aligned}
            	\mathsf{MSE}(\ebeta) &
            	= \left\| \eL^{-\varphi}\;\mathsf{T}_\tau\left[ \eL^{\varphi}\,(\btheta +  \bxi) \right] - \btheta\right\|_2^2
            	\lesssim \sum\limits_{j=1}^r \min\left( \frac{\widehat\lambda_1^{\varphi/2}}{\widehat\lambda_j^{\varphi/2}} \,\sigma \rho,\, |\theta_j|\right)^2.
                \nonumber
            \end{aligned}
        \end{equation}
 
  Now we choose $\,\mathsf{T}_\tau[\,\cdot\,] = \mathsf{SOFT}_\tau[\,\cdot\,]\,$ to prove the desired lower bound. We again consider two cases:
 \begin{itemize}
 	\item If $\left|\widehat{\lambda}_j^{\varphi/2}(\theta_j + \xi_j)\right| > \tau\,$, then on $\,\Omega_1\,$ using Lemma~\ref{L:noise}
 	\begin{align}
	&\left| \widehat\lambda_j^{-\varphi/2} \; \mathsf{SOFT}_{\tau}[\widehat{\lambda}_j^{\varphi/2}(\theta_j + \xi_j)] - \theta_j\right| \nonumber \\
	&\qquad =
	\left| \widehat\lambda_j^{-\varphi/2} \; \left( \mathsf{SOFT}_{\tau}[\widehat{\lambda}_j^{\varphi/2}(\theta_j + \xi_j)] - \widehat{\lambda}_j^{\varphi/2}(\theta_j + \xi_j) \right) + \xi_j\right|
	\nonumber \\
	&\qquad \geq \widehat{\lambda}_j^{-\varphi/2}\tau - |\xi_j|
\geq \left( \frac{\widehat\lambda_1^{\varphi/2}}{\widehat\lambda_j^{\varphi/2}}-\frac{1}{2}\right) \sigma\rho \geq \frac{1}{2}\frac{\widehat\lambda_1^{\varphi/2}}{\widehat\lambda_j^{\varphi/2}} \,\sigma \rho.
\nonumber
\end{align}
 	\item If $\left|\widehat{\lambda}_j^{\varphi/2}(\theta_j + \xi_j)\right| \leq \tau\,$, then 
 	\begin{align}
	&\left| \widehat\lambda_j^{-\varphi/2} \; \mathsf{SOFT}_{\tau}[\widehat{\lambda}_j^{\varphi/2}(\theta_j + \xi_j)] - \theta_j\right| = |\widehat\lambda_j^{-\varphi/2} \cdot 0 - \theta_j| = |\theta_j|
\nonumber
\end{align}
 \end{itemize}
In any case,
\begin{align}
	&\left| \widehat\lambda_j^{-\varphi/2} \; \mathsf{SOFT}_{\tau}[\widehat{\lambda}_j^{\varphi/2}(\theta_j + \xi_j)] - \theta_j\right| \gtrsim \min\left( \frac{\widehat\lambda_1^{\varphi/2}}{\widehat\lambda_j^{\varphi/2}} \,\sigma \rho,\, |\theta_j|\right),
\nonumber
\end{align}
implying the matching lower bound.

 \subsection{Proof of Corollary~\ref{Cor1}}
 We need to upper bound the right-hand side of the inequality obtained in Theorem~\ref{Th1a}.
 Let us define the following auxiliary set:
 \begin{align}
 	&\mathcal{P} \eqdef \left\{ j\in [r]\;\Big|\;\; \widehat{\lambda}_j \geq \widehat{\lambda}_1\,\rho^{2/(2+\varphi)}\right\}.
 	\nonumber
 \end{align}
  We first bound
  \begin{align}
  	\sum\limits_{j\notin \mathcal{P}} \min\left( \frac{\widehat\lambda_1^{\varphi/2}}{\widehat\lambda_j^{\varphi/2}} \,\sigma \rho, \; |\theta_j|\right)^2
  	\leq
  	\sum\limits_{j\notin \mathcal{P}} |\theta_j|^2 = \sum\limits_{j\notin \mathcal{P}}  \widehat\lambda_j  (\widehat\bu_j^\T\bbeta)^2 \leq \widehat\lambda_1 \|\bbeta\|_2^2 \,\rho^{2/(2+\varphi)},
  \nonumber
  \end{align}
  where we used only the definition of $\,\mathcal{P}\,$ (more specifically its complement).
  Then, we bound
  \begin{align}
  	\sum\limits_{j\in \mathcal{P}} \min\left( \frac{\widehat\lambda_1^{\varphi/2}}{\widehat\lambda_j^{\varphi/2}} \,\sigma \rho, \; |\theta_j|\right)^2
  	\leq
  	\sum\limits_{j\in \mathcal{P}} \frac{\widehat\lambda_1^{\varphi/2}}{\widehat\lambda_j^{\varphi/2}} \,\sigma \rho \;|\theta_j|
  	\leq
  	\sigma \rho^{-\frac{2}{2+\varphi}\cdot\frac{\varphi}{2} + 1}\,
  	\sum\limits_{j\in \mathcal{P}} |\theta_j|
  	\leq \sigma \rho^{2/(2+\varphi)} \,\| \btheta\|_1,
  \nonumber
  \end{align}
  where we again used the definition of $\,\mathcal{P}$. Now we apply $\,\| \btheta\|_1 \leq \| \Se\|^{1/2} \| \bbeta\|_2 \,\reff[\Se]^{1/2}$, and adding the above two inequalities yields the desired statement.

\subsection{Proof of Theorem~\ref{Th:CV}}
	Using standard ``empirical risk minimization'' reasoning, we write
	\begin{align}
		&\frac{1}{L}\sum\limits_{l=1}^L \E\left[ \left( y - \bx^\T \ebeta^{(l)}_{\tau^{cv}}\right)^2\right] 
		\leq 
		\frac{1}{L}\sum\limits_{l=1}^L \frac{1}{|\mathcal{B}_l|} \sum\limits_{i\in\mathcal{B}_l} \left( y_i - \bx_i^\T \ebeta^{(l)}_{\tau^{cv}}\right)^2 
		\nonumber \\ &\qquad\qquad+
		\frac{1}{L}\sum\limits_{l=1}^L \sup\limits_{\bbeta^\prime \in \{ \ebeta^{(l)}_\tau\}_{\tau \geq 0}} \left| \frac{1}{|\mathcal{B}_l|} \sum\limits_{i\in\mathcal{B}_l} \left( y_i - \bx_i^\T \bbeta^\prime\right)^2  -\E\left[ \left( y - \bx^\T \bbeta^\prime\right)^2\right]  \right|
		\nonumber \\ &\qquad \leq 
		\frac{1}{L}\sum\limits_{l=1}^L \frac{1}{|\mathcal{B}_l|} \sum\limits_{i\in\mathcal{B}_l} \left( y_i - \bx_i^\T \ebeta^{(l)}_{\tau^{oracle}}\right)^2 
		\nonumber \\ &\qquad\qquad+
		\frac{1}{L}\sum\limits_{l=1}^L \sup\limits_{\bbeta^\prime \in \{ \ebeta^{(l)}_\tau\}_{\tau \geq 0}} \left| \frac{1}{|\mathcal{B}_l|} \sum\limits_{i\in\mathcal{B}_l} \left( y_i - \bx_i^\T \bbeta^\prime\right)^2  -\E\left[ \left( y - \bx^\T \bbeta^\prime\right)^2\right]  \right|
		\nonumber \\ &\qquad \leq 
		\frac{1}{L}\sum\limits_{l=1}^L \E\left[\left( y - \bx^\T \ebeta^{(l)}_{\tau^{oracle}}\right)^2 \right]
		\nonumber \\ &\qquad\qquad+
		\frac{2}{L}\sum\limits_{l=1}^L \sup\limits_{\bbeta^\prime \in \{ \ebeta^{(l)}_\tau\}_{\tau \geq 0}} \left| \frac{1}{|\mathcal{B}_l|} \sum\limits_{i\in\mathcal{B}_l} \left( y_i - \bx_i^\T \bbeta^\prime\right)^2  -\E\left[ \left( y - \bx^\T \bbeta^\prime\right)^2\right]  \right|.
		\nonumber
	\end{align}
	Here in the second inequality we used the definition of $\,\tau^{cv}\,$, namely the fact that it minimizes the cross-validation error. The expectations in the above expressions are over $\,(\bx, y)\,$ only, so using $\,\E\left[ \left( y - \bx^\T \ebeta^{(l)}_{\tau}\right)^2\right]  = \mathsf{PE}(\ebeta^{(l)}_{\tau}) + \sigma^2\,$ the only thing left is to bound the supremums in the right-hand side. We will do this for each $\,l \in [L]\,$ similarly, so from now on we fix $\,l \in [L]$. Note that due to the structure of our estimator $\,\ebeta^{(l)}_\tau$, the set $\{ \ebeta^{(l)}_\tau \}_{\tau \geq 0}$ contains at most $\,(r+1)\,$ distinct estimators, each derived from the training sample of the $\,l$-th fold $\,\{ (\bx_i, y_i) \}_{i\in [n]\setminus\mathcal{B}_l}$, and thus independent of the validation set of the $\,l$-th fold $\,\{ (\bx_i, y_i) \}_{i\in \mathcal{B}_l}$. Working conditionally on $\,\{ (\bx_i, y_i) \}_{i\in [n]\setminus\mathcal{B}_l}$, we can bound 
		\begin{align}
		 \left| \frac{1}{|\mathcal{B}_l|} \sum\limits_{i\in\mathcal{B}_l} \left( y_i - \bx_i^\T \bbeta^\prime\right)^2  -\E\left[ \left( y - \bx^\T \bbeta^\prime\right)^2\right]  \right|
		\nonumber
	\end{align}
	with high probability for each single $\bbeta^\prime \in \{ \ebeta^{(l)}_\tau \}_{\tau \geq 0}$ (which are treated as deterministic vectors), and the rest of the proof will easily follow.  Let us focus on an arbitrary $\bbeta^\prime \in \{ \ebeta^{(l)}_\tau \}_{\tau \geq 0}$.
	
	To apply some concentration results, we first show that $\,(y_i - \bx_i^\T\bbeta^\prime)\,$ are sub-Weibull random variables with parameter $\,\alpha/2$. Indeed, $\,\|\varepsilon_i \|_{\psi_\alpha} \leq \sigma$ by Assumption~\ref{A:Noise} and 
	\begin{align}
		 \| \bx_i^\T (\bbeta -\bbeta^\prime)\|_{\psi_\alpha} &\leq
		  \| \bx_i^\T (\bbeta -\bbeta^\prime)\|_{\psi_2} =
		  \| (\St^{-1/2} \bx_i)^\T  \St^{1/2}(\bbeta -\bbeta^\prime)\|_{\psi_2}
		  \nonumber \\ & =
		    \left\| (\St^{-1/2} \bx_i)^\T \frac{\St^{1/2}(\bbeta -\bbeta^\prime)}{\| \St^{1/2}(\bbeta -\bbeta^\prime)\|_2} \right\|_{\psi_2} \cdot \sqrt{(\bbeta-\bbeta^\prime)^\T \St (\bbeta-\bbeta^\prime)}
		    \nonumber \\ & \lesssim
		    \sqrt{(\bbeta-\bbeta^\prime)^\T \St (\bbeta-\bbeta^\prime)},
		\nonumber
	\end{align}
	where in the first inequality we used the monotonicity of the Orlicz norm w.r.t. parameter $\,\alpha$, and in the last inequality we used Assumption~\ref{A:X} together with the fact that the inner product of a sub-Gaussian vector with a unit vector is a sub-Gaussian random variable. By properties of the Orlicz norm we further have
	\begin{align}
		  \| y_i - \bx_i^\T \bbeta^\prime\|_{\psi_\alpha} &=
		  \| \bx_i^\T (\bbeta -\bbeta^\prime) + \varepsilon_i\|_{\psi_\alpha}
		   \nonumber \\ &\lesssim
		  \| \bx_i^\T (\bbeta -\bbeta^\prime)\|_{\psi_\alpha} + \|\varepsilon_i\|_{\psi_\alpha} \lesssim \sqrt{(\bbeta-\bbeta^\prime)^\T \St (\bbeta-\bbeta^\prime)} + \sigma.
		  \nonumber
	\end{align}
	and due to Proposition~D.2 of \cite{Weibull}
	\begin{align}
		  \left\| (y_i - \bx_i^\T \bbeta^\prime)^2\right\|_{\psi_{\alpha/2}} &\leq
		  \| y_i - \bx_i^\T \bbeta^\prime\|_{\psi_{\alpha}}^2
		  \nonumber \\ &
		  \lesssim
		    (\bbeta-\bbeta^\prime)^\T \St (\bbeta-\bbeta^\prime) + \sigma^2
		   \lesssim
		   \bbeta^\T \St \bbeta + {\bbeta^\prime}^\T \St \bbeta^\prime + \sigma^2.
		  \nonumber
	\end{align}
	After subtracting the expectation, the same bound holds for the centered random variable, but with a different hidden constant. Now we readily apply Theorem~3.1 of \cite{Weibull} with $\,X_i \eqdef (y_i - \bx_i^\T \bbeta^\prime)^2 - \E\left[ (y - \bx^\T \bbeta^\prime)^2\right]$, $\,a_i = 1/|\mathcal{B}_l|$ for all $\,i\in[|\mathcal{B}_l|]\,$ and parameter $\,\alpha/2$:
	\begin{align}
		&\Prob\Bigg[ \left| \frac{1}{|\mathcal{B}_l|} \sum\limits_{i\in \mathcal{B}_l} (y_i - \bx_i^\T \bbeta^\prime)^2 - \E\left[ (y - \bx^\T \bbeta^\prime)^2\right] \right| \gtrsim
		\label{to_int} \\
		& \qquad\gtrsim ( \bbeta^\T \St \bbeta + {\bbeta^\prime}^\T \St \bbeta^\prime + \sigma^2) \left( \sqrt{\frac{t}{|\mathcal{B}_l|}} + \frac{t^{2/\alpha}}{|\mathcal{B}_l|}\right)\;\;\Bigg|\;\; \{ (\bx_i, y_i) \}_{i\in[n]\setminus\mathcal{B}_l}\;\Bigg] \leq 2e^{-t}
		\nonumber
	\end{align}
	for all $\,t \geq 0$.  Now we need to carefully integrate out $\,\{ (\bx_i, y_i) \}_{i\in[n]\setminus\mathcal{B}_l}$, and before this, we need to bound ${\bbeta^\prime}^\T \St \bbeta^\prime$ with high probability over $\,\{ (\bx_i, y_i) \}_{i\in[n]\setminus\mathcal{B}_l}$.
	
	In this paragraph, to keep the notation light, we drop the superscript $(l)$, but keep in mind that we work with the sample $\,\{ (\bx_i, y_i) \}_{i\in[n]\setminus\mathcal{B}_l}$, and for the purposes of this paragraph the quantities $\,\Se, \,\eL,\,\eU,\, \btheta,\, \bxi$  correspond to the training sample of $l$-th fold $\,\{ (\bx_i, y_i) \}_{i\in[n]\setminus\mathcal{B}_l}$ only. With any $\tau \geq 0$, we have the following:
	\begin{align}
		{\bbeta^\prime}^\T \St \bbeta^\prime &= 
		{\ebeta_\tau^{(l)}}^\T \St \ebeta^{(l)}_\tau
		\nonumber \\ &= 
		\mathsf{SOFT}_\tau[ \btheta_{\leq k^*} + \bxi_{\leq k^*}]^\T 
		\,\eL_{\leq k^*}^{-1} \eU_{\leq k^*}^\T \bU \bLambda^2 \bU^\T\eU_{\leq k^*}\eL_{\leq k^*}^{-1}
		\,\mathsf{SOFT}_\tau[ \btheta_{\leq k^*} + \bxi_{\leq k^*}]
		\nonumber \\ &\leq
		\left\| \bLambda \bU^\T\eU_{\leq k^*}\eL_{\leq k^*}^{-1}\right\|^2\,\left\| \mathsf{SOFT}_\tau[ \btheta_{\leq k^*} + \bxi_{\leq k^*}]\right\|_2^2.
		\nonumber
		\end{align}
	By Lemma~\ref{L:opnorm}, 
	\begin{align}
		\left\| \bLambda \bU^\T\eU_{\leq k^*} \eL_{\leq k^*}^{-1}\right\| \lesssim 1
	\nonumber
	\end{align}
	on some set $\,\Omega_3^{(l)}$, defined similarly to $\,\Omega_3\,$ (which is introduced after Lemma~\ref{L:psi_n}) but for $\,\{ (\bx_i, y_i) \}_{i\in[n]\setminus\mathcal{B}_l}\,$ rather than for the whole sample. Also,
	\begin{align}
		\left\| \mathsf{SOFT}_\tau[ \btheta_{\leq k^*} + \bxi_{\leq k^*}]\right\|_2^2
		\leq \left\| \btheta_{\leq k^*} + \bxi_{\leq k^*}\right\|_2^2
		\lesssim \| \btheta_{\leq k^*}\|_2^2 + \| \bxi_{\leq k^*} \|_2^2
		\nonumber
		\end{align}
	with
	\begin{align}
		\| \btheta_{\leq k^*}\|_2^2 = \bbeta^\T \Se \bbeta \leq \bbeta^\T \St \bbeta  + \|\St\| \|\bbeta\|_2^2\sqrt{\frac{\reff[\St] + \log(1/\delta)}{n-|\mathcal{B}_l|}} \lesssim \|\St\| \|\bbeta\|_2^2
	\nonumber
	\end{align}
	by Lemma~\ref{L:covconc} on $\,\Omega_2^{(l)}\,$ (defined similarly to $\,\Omega_2$ after Lemma~\ref{L:covconc}, but for $\,\{ (\bx_i, y_i) \}_{i\in[n]\setminus\mathcal{B}_l}$) and Assumption~\ref{A:tech},  and
	\begin{align}
	\| \bxi_{\leq k^*} \|_2^2 \leq r \| \bxi_{\leq k^*} \|_\infty^2 \lesssim \sigma^2 ( \log(2r/\delta))^{2/\alpha}
	\nonumber
	\end{align}
	by Lemma~\ref{L:noise} on $\,\Omega_1^{(l)}\,$ (defined similarly to $\,\Omega_1$ after Lemma~\ref{L:noise}, but for $\,\{ (\bx_i, y_i) \}_{i\in[n]\setminus\mathcal{B}_l}$). Putting this all together,
	\begin{align}
		\bbeta^\prime \St \bbeta^\prime \lesssim \|\St\| \|\bbeta\|_2^2 + \sigma^2 ( \log(2r/\delta))^{2/\alpha}
		\nonumber
	\end{align}
	on $\,\Omega_1^{(l)} \cap \Omega_2^{(l)} \cap \Omega_3^{(l)}$. Note that $\,\Prob[\Omega_1^{(l)} \cap \Omega_2^{(l)} \cap \Omega_3^{(l)}] \geq 1-3\delta$, with the probability taken over the randomness of $\,\{ (\bx_i, y_i) \}_{i\in[n]\setminus\mathcal{B}_l}$.
	
	Taking the bound in the previous display into account, we integrate \eqref{to_int}. We split the integral into two parts: the integration over $\,\Omega_1^{(l)} \cap \Omega_2^{(l)} \cap \Omega_3^{(l)}\,$ allows to replace ${\bbeta^\prime}^\T \St \bbeta^\prime$ inside the conditional probability with its deterministic bound on this set and gives $\,2e^{-t}\,$ in the right-hand side, and the integration over $\,\left(\Omega_1^{(l)} \cap \Omega_2^{(l)} \cap \Omega_3^{(l)}\right)^c$, where the bound on $\,{\bbeta^\prime}^\T \St \bbeta^\prime\,$ may be violated, adds at most $\,3\delta$ to the probability of the bad event. Therefore,
	\begin{align}
		&\Prob\Bigg[ \left| \frac{1}{|\mathcal{B}_l|} \sum\limits_{i\in \mathcal{B}_l} (y_i - \bx_i^\T \bbeta^\prime)^2 - \E\left[ (y - \bx^\T \bbeta^\prime)^2\right] \right| \gtrsim
		\nonumber \\
		& \qquad\qquad\qquad\gtrsim \left(\|\St\| \|\bbeta\|_2^2 + \sigma^2\left( \log(2r/\delta) \right)^{2/\alpha}\right) \left( \sqrt{\frac{t}{|\mathcal{B}_l|}} + \frac{t^{2/\alpha}}{|\mathcal{B}_l|}\right)\Bigg] \leq 2e^{-t} + 3\delta.
		\nonumber
	\end{align}
	Now we apply the union bound (recall that the cardinality of $\,\{ \ebeta^{(l)}_\tau\}_{\tau \geq 0}\,$ does not exceed $\,r+1$)
	\begin{align}
		&\Prob\Bigg[ \sup\limits_{\bbeta^\prime \in \{ \ebeta^{(l)}_\tau\}_{\tau \geq 0}}\left| \frac{1}{|\mathcal{B}_l|} \sum\limits_{i\in \mathcal{B}_l} (y_i - \bx_i^\T \bbeta^\prime)^2 - \E\left[ (y - \bx^\T \bbeta^\prime)^2\right] \right| \gtrsim
		\nonumber \\
		& \qquad\qquad\qquad\gtrsim \left(\|\St\| \|\bbeta\|_2^2 + \sigma^2( \log(2r/\delta) )^{2/\alpha} \right) \left( \sqrt{\frac{t}{|\mathcal{B}_l|}} + \frac{t^{2/\alpha}}{|\mathcal{B}_l|}\right)\Bigg] \leq 2(r+1)e^{-t} + 3\delta.
		\nonumber
	\end{align}
	Finally, by yet another application of the union bound we obtain
	\begin{align}
		&\Prob\Bigg[ \frac{2}{L}\sum\limits_{l=1}^L \sup\limits_{\bbeta^\prime \in \{ \ebeta^{(l)}_\tau\}_{\tau \geq 0}}\left| \frac{1}{|\mathcal{B}_l|} \sum\limits_{i\in \mathcal{B}_l} (y_i - \bx_i^\T \bbeta^\prime)^2 - \E\left[ (y - \bx^\T \bbeta^\prime)^2\right] \right| \gtrsim
		\nonumber \\
		& \qquad\qquad\gtrsim \left(\|\St\| \|\bbeta\|_2^2 + \sigma^2( \log(2r/\delta) )^{2/\alpha}\right) \left( \sqrt{\frac{t}{|\mathcal{B}_l|}} + \frac{t^{2/\alpha}}{|\mathcal{B}_l|}\right)\Bigg] \leq 2L(r+1)e^{-t} + 3L\delta.
		\nonumber
	\end{align}
	We conclude the proof by picking $\,t = \log(2Lr/\delta)$ and adjusting the constants throughout the proof to make sure that the desired result holds with probability $\,1-\delta$.

\section{Additional proofs} \label{S:auxproofs}

\subsection{Proof of Proposition~\ref{Prop:tight}}
	(i) and (ii) follow trivially by taking $\,q = 0\,$ and $\,q = \nu$, respectively. Now we focus on (iii). Define $\,j^*$ such that $|\theta_{(j^*)}| \asymp \sigma\rho$, i.e. take $\,j^* = \lfloor (\sigma\rho/|\theta_{(1)}|)^{-1/a} \rfloor$. 
		
	If $\,j^* \geq r$, then taking $\,q = 0$ we have
	\begin{align}
		\sum\limits_{j=1}^r \min(\sigma\rho,\, |\theta_{(j)}|)^2 \asymp r(\sigma\rho)^2 \geq \inf\limits_{q\in[0,\, 2]} \left\{ \| \btheta\|_q^q (\sigma\rho)^{2-q}\right\}.
		\nonumber
	\end{align}
	For the rest of the proof assume $\,j^* < r$. Then
	\begin{align} 
		&\sum\limits_{j=1}^r \min(\sigma\rho,\, |\theta_{(j)}|)^2 
		\asymp \sum\limits_{j=1}^{j^*} (\sigma\rho)^2 + \sum\limits_{j=j^*+1}^r |\theta_{(j)}|^2
		\asymp j^*(\sigma\rho)^2 + |\theta_{(1)}| \sum\limits_{j=j^*+1}^r j^{-2a}
		\nonumber\\
		&\qquad 
		\,\asymp\, \begin{cases}
			j^*(\sigma\rho)^2 + |\theta_{(1)}|\,({j^*}^{-2a+1} - r^{-2a+1}), & \;\;a > 1/2,\\
			j^*(\sigma\rho)^2 + |\theta_{(1)}|\, \log(r/j^*), & \;\;a = 1/2,\\
			j^*(\sigma\rho)^2 + |\theta_{(1)}|\, (r^{-2a+1} - {j^*}^{-2a+1}), & \;\;a < 1/2;\\
		\end{cases}
		\nonumber\\
		&\qquad 
		\,\asymp\, \begin{cases}
			|\theta_{(1)}|^{1/a}\,(\sigma\rho)^{2-1/a}, & \;\;a > 1/2,\\
			|\theta_{(1)}|\, \log(er/j^*), & \;\;a = 1/2,\\
			|\theta_{(1)}|\, r^{-2a+1}, & \;\;a < 1/2;\\
		\end{cases}
	\nonumber
	\end{align}
	At the same time, taking $\,q = 1/a \in (0,\,2]\,$ for $\,a \geq 1/2\,$ and $\,q=2\,$ for $\,a < 1/2$, we get
	\begin{align}
		 &\inf\limits_{q\in[0,\, 2]} \left\{ \| \btheta\|_q^q (\sigma\rho)^{2-q}\right\}
		 	\,\lesssim \,
		 	\begin{cases}
		 		(\sigma\rho)^{2-1/a} \,|\theta_{(1)}|^{1/a} \sum\limits_{j=1}^r (j^{-a})^{1/a},&\;\;a \geq 1/2, \\
		 		|\theta_{(1)}| \,\sum\limits_{j=1}^r j^{-2a},&\;\;a < 1/2;
		 	\end{cases}
		 	\nonumber \\
		 	&\qquad
		 	\,\asymp\, 
		 	\begin{cases}
		 		|\theta_{(1)}|^{1/a}\,(\sigma\rho)^{2-1/a} \,\log(r),&\;\;a \geq 1/2, \\
		 		|\theta_{(1)}|\, r^{-2a+1}, &\;\;a < 1/2;
		 	\end{cases}
		\nonumber
	\end{align}
	Comparing this to the expressions for $\,\sum_{j=1}^r \min(\sigma\rho,\, |\theta_{(j)}|)^2\,$ above, we conclude the proof.
	
\subsection{Proof of Lemma~\ref{L:noise}}
	It is straightforward to verify that $\bxi$ is also a sub-Weibull random vector conditionally on $\XX$:
	\begin{equation}
            \begin{aligned}
            	\sup\limits_{\|\bw\|_2=1} \left\| \bw^\T \frac{\eL^{-1} \eU^\T \XX^\T \beps}{n} \right\|_{\psi_\alpha} \leq \frac{1}{\sqrt{n}} \sup\limits_{\|\bw\|_2=1} \| \bw^\T \beps\|_{\psi_\alpha} \leq \frac{\sigma}{\sqrt{n}},
                \nonumber
            \end{aligned}
        \end{equation}
        where the first inequality holds given $\XX$ since $\| \XX\eU\eL^{-1}\bw/\sqrt{n}\|_2 = 1$,
        and the last inequality is due to Assumption~\ref{A:Noise}. Then, taking $\be_1, \ldots, \be_r$ (the standard basis in $\R^r$) we have
	\begin{equation}
            \begin{aligned}
            	& \Prob\left[ |\be_j^{\top} \bxi| \geq t \,\big|\,\XX\right] \leq 2\,\exp\left(-\left(t\sqrt{n}/\sigma\right)^\alpha\right)\;\;\;\text{for }j \in[r].
                \nonumber
            \end{aligned}
        \end{equation}
    Applying the union bound and plugging in $t = \sigma\rho/2$, we get the desired.

\subsection{Proof of Lemma~\ref{L:psi_n}}
	The proof is pretty standard and can be found in numerous papers.
	Fix $l, l^\prime \in [d]$. We have
	\begin{equation}
            \begin{aligned}
            	\overline{\eta}_{ll^\prime} &= \frac{\bu_l^\T (\Se-\St) \bu_{l^\prime}}{\sqrt{\lambda_l\lambda_{l^\prime}}}
            	= \bu_l^\T (\St^{-1/2}\Se\St^{-1/2} - \Id_d) \bu_{l^\prime}\\
            	&= \frac{1}{n}\sum\limits_{i=1}^n \left\{ (\bu_l^\T \St^{-1/2} \bx_i)\cdot(\bu_{l^\prime}^\T \St^{-1/2} \bx_i) - \E\left[ (\bu_l^\T \St^{-1/2} \bx_i)\cdot(\bu_{l^\prime}^\T \St^{-1/2} \bx_i) \right]\right\}.
                \nonumber
            \end{aligned}
        \end{equation}
        Since by Assumption~\ref{A:X} $\St^{-1/2} \bx_i$ is sub-Gaussian for $i\in [n]$, then by Lemma 2.7.7 of \cite{Vershynin} $(\bu_l^\T \St^{-1/2} \bx_i)\cdot(\bu_{l^\prime}^\T \St^{-1/2} \bx_i)$ is sub-Exponential and by Exercise 2.7.10 of \cite{Vershynin} its centered version is also sub-Exponential for $i\in [n]$. Bernstein's inequality (e.g. Corollary 2.8.3 of \cite{Vershynin}) applied to this centered random variables implies
        \begin{equation}
            \begin{aligned}
            	\Prob\left[ |\overline{\eta}_{ll^\prime}| \geq t\right] \leq 2\exp\left( -cn\, \min\left(\frac{t^2}{C^2}, \frac{t}{C}\right)\right).
                \nonumber
            \end{aligned}
        \end{equation}
        By union bound,
        \begin{equation}
            \begin{aligned}
            	\Prob\left[ \max\limits_{l,l^\prime\in[d]}|\overline{\eta}_{ll^\prime}| \geq t\right] \leq 2d^2\exp\left( -c n\,\min\left(\frac{t^2}{C^2}, \frac{t}{C}\right)\right).
                \nonumber
            \end{aligned}
        \end{equation}
        Taking $t = C\sqrt{\log(2d^2/\delta)/n}$ for some other properly chosen $C$ and using Assumption~\ref{A:tech} to make sure $t^2/c^2 \leq t/c$, we conclude the proof.

\subsection{Proof of Lemma~\ref{L:relbounds}}
	The first part follows from Corollary 2 of \cite{Wahl}, and the second part follows from Lemma 4 of \cite{Wahl}. Conditions (2.1) and $|\eta_{ll^\prime}| \leq x$ for all $l,l^\prime\in[d]$ are satisfied since we work on $\Omega_3$ from Lemma~\ref{L:psi_n} and consider $j\in[k^*]$ with properly defined $k^*$.

\subsection{Proof of Lemma~\ref{L:opnorm}}
	We first apply inequalities
	\begin{align}
		\| \bLambda \bU^\T \eU_{\leq k^*} \eL_{\leq k^*}^{-1} \| &=
		\| \bLambda \bU^\T \eU_{\leq k^*} \bLambda_{\leq k^*}^{-1} \bLambda_{\leq k^*}\eL_{\leq k^*}^{-1} \|
		\leq
		\| \bLambda \bU^\T \eU_{\leq k^*} \bLambda_{\leq k^*}^{-1} \| \| \bLambda_{\leq k^*}\eL_{\leq k^*}^{-1} \|
		\nonumber
		\\&\leq
		\left( \left\| \bLambda \bU^\T \eU_{\leq k^*} \bLambda_{\leq k^*}^{-1} - \begin{bmatrix} \Id_{k^*} \\ \Oo_{(d-k^*) \times k^*} \end{bmatrix} \right\| + 1\right) \| \bLambda_{\leq k^*}\eL_{\leq k^*}^{-1} \|.
		\nonumber
	\end{align}
	The spectral norm of $\bLambda_{\leq k^*}\eL_{\leq k^*}^{-1}$ is easy to control:
	\begin{align}
		\| \bLambda_{\leq k^*}\eL_{\leq k^*}^{-1} \| = \left( \max\limits_{j\in[k^*]} \frac{\lambda_j}{\widehat{\lambda}_j}\right)^{1/2} \leq \frac{1}{(1-C\epsilon)^{1/2}} \leq 4,
		\nonumber
	\end{align}
	where the first inequality is due to Lemma~\ref{L:relbounds} on $\Omega_3$ and in the last inequality we assumed that $\epsilon$ is small enough by Assumption~\ref{A:tech}.
	
	Next, let us focus on $\bLambda \bU^\T \eU_{\leq k^*} \bLambda_{\leq k^*}^{-1} - [ \Id_{k^*}  \,\Oo_{k^*\times (d-k^*)} ]^\T$, which we denote by $\bH$ for shortness. Denote its columns as $\bh_1, \ldots, \bh_{k^*}$. We can bound $\ell_1$-norm of each column, using Lemma~\ref{L:relbounds}, as
	\begin{align}
		\| \bh_j \|_1 &= \sum\limits_{\substack{l=1\\l\neq j}}^d \left| \frac{\lambda_l^{1/2} \bu_l^\T \widehat{\bu}_j}{\lambda_j^{1/2}}\right| + |\bu_j^\T \widehat{\bu}_j - 1|
		=
		\sum\limits_{\substack{l=1\\l\neq j}}^d \left| \frac{\lambda_l^{1/2} \bu_l^\T \widehat{\bu}_j}{\lambda_j^{1/2}}\right| + \frac{1}{2}\,\| \widehat{\bu}_j - \bu_j^\T\|_2^2
		\nonumber\\&\leq
		\epsilon\sum\limits_{\substack{l=1\\l\neq j}}^d \frac{\lambda_l^{1/2} }{\lambda_j^{1/2}} \frac{\lambda_j^{1/2} \lambda_l^{1/2}}{|\lambda_j - \lambda_l|} + C\epsilon^2\sum\limits_{\substack{l=1\\l\neq j}}^d \frac{\lambda_j \lambda_l}{(\lambda_j - \lambda_l)^2}
	\nonumber
	\end{align}
	on $\Omega_3$.
	Applying \cite{Wahl}, inequalities (3.30), or \cite{Jirak}, Lemma 7.13, together with Assumption~\ref{A:cvxdecay}, we get
	\begin{align}
		\| \bh_j \|_1 \leq \epsilon \,j \log(j) + C\epsilon^2\,j^2.
	\nonumber
	\end{align}
	Since $j\leq k^*$ and $\epsilon k^* \leq C$, we have
	\begin{align}
		\| \bh_j \|_1 \leq C\epsilon \,j \log(k^*) \;\;\;\text{for all }\;j\in[k^*].
	\nonumber
	\end{align}
	Finally, we have for the Frobenius norm $\| \bH \|_{\Fr}$ on $\Omega_3$
	\begin{align}
		\| \bH \|_{\Fr}^2 = \sum\limits_{j=1}^{k^*} \| \bh_j\|_2^2 \leq C \sum\limits_{j=1}^{k^*} \epsilon^2 j^2 \log(k^*)^2
		= C\epsilon^2 {k^*}^{3} \log(k^*)^2 \leq C^\prime,
	\nonumber
	\end{align}
	where we used the definition of $k^*$.
	The inequality between the spectral and the Frobenius norms completes the proof.

\subsection{Proof of Lemma~\ref{L:eigen}}
	Fix arbitrary $j\in[k^*]$. We have the following chain of inequalities:
	\begin{equation}
            \begin{aligned}
            	|\widehat{\lambda}_j^{1/2}\widehat{\bu}_j^\T \bbeta-\lambda_j^{1/2}\bu_j^\T \bbeta|
            	&\leq \|\widehat{\lambda}_j^{1/2}\widehat{\bu}_j-\lambda_j^{1/2}\bu_j\|_2 \|\bbeta\|_2 \\
            	&\leq \|\widehat{\lambda}_j^{1/2}\widehat{\bu}_j -\lambda_j^{1/2}\widehat{\bu}_j + \lambda_j^{1/2}\widehat{\bu}_j -\lambda_j^{1/2}\bu_j\|_2 \|\bbeta\|_2
            	\\&\leq \left( \left|\widehat{\lambda}_j^{1/2} -\lambda_j^{1/2}\right| \,\|\widehat{\bu}_j\|_2 + \lambda_j^{1/2} \| \widehat{\bu}_j - \bu_j\|_2 \right) \|\bbeta\|_2
            	\\&\leq \left( |\widehat{\lambda}_j -\lambda_j|^{1/2} + \lambda_j^{1/2} \| \widehat{\bu}_j - \bu_j\|_2 \right) \|\bbeta\|_2.
                \nonumber
            \end{aligned}
        \end{equation}
        Applying Lemma~\ref{L:relbounds} we have on $\Omega_3$
        \begin{equation}
            \begin{aligned}
            	|\widehat{\lambda}_j^{1/2}\widehat{\bu}_j^\T \bbeta-\lambda_j^{1/2}\bu_j^\T \bbeta|
            	&\leq C\|\bbeta\|_2 \left(  \lambda_j^{1/2} \epsilon^{1/2} + \lambda_j^{1/2}\epsilon\sqrt{\sum\limits_{\substack{l=1\\l\neq j}}^d \frac{\lambda_j\lambda_l}{(\lambda_j-\lambda_l)^2}}\right)
            	\\&\leq C\|\bbeta\|_2 \left(\lambda_j^{1/2}  \epsilon^{1/2} + \lambda_j^{1/2}\epsilon \,j\right) ,
                \nonumber
            \end{aligned}
        \end{equation}
        where in the second inequality we used \cite{Wahl}, inequalities (3.30), or \cite{Jirak}, Lemma 7.13, together with Assumption~\ref{A:cvxdecay}. Taking maximum over $j\in[k^*]$ we obtain the claim (i), and raising to the square and summing over $j\in[k^*]$ we get the claim (ii).

\end{appendices}

\bibliographystyle{apalike}

\end{document}